\newtheorem{thm}{Theorem}[section]
\newtheorem{prop}[thm]{Proposition}
\newtheorem*{definition*}         {Definition}
\newtheorem{lemma}[thm]{Lemma}
\newtheorem{cor}[thm]{Corollary}
\newtheorem*{remark}{Remark}
\theoremstyle{remark}
\newcommand*{\id}{\textrm{id}}
\newcommand*{\e}{\epsilon}
\newcommand*{\Q}{\mathbb{Q}}
\newcommand*{\E}{\mathcal{E}}
\newcommand*{\Hh}{\mathbb{H}}
\newcommand*{\Z}{\mathbb{Z}}
\newcommand*{\N}{\mathbb{N}}
\newcommand*{\R}{\mathbb{R}}
\newcommand*{\C}{\mathbb{C}}
\newcommand*{\PP}{\mathbb{P}}
\newcommand*{\FF}{\mathcal{F}}
\newcommand*{\D}{\mathbb{D}}
\DeclareMathOperator{\spec}{Spec}
\newcommand*{\ra}{\rightarrow}
\newcommand*{\ol}{\overline}
\def\SL{{\rm SL}}
\def\PPSL{\rm PSL}
\def\nd{{\rm nd}}
\def\n{{\rm n}}
\def\Hom{{\rm Hom}}
\def\stab{{\rm Stab}}
\def\rank{{\rm rank}}
\def\Hom{{\rm Hom}}
\def\im{{\rm Im}}
\def\codim{{\rm codim}}
\newcommand{\red}[1]{\textcolor{red}{#1}}
\newcommand{\blue}[1]{\textcolor{blue}{#1}}
\title{Ax-Schanuel for shimura varieties}
\author{Ngaiming Mok, Jonathan Pila,  and Jacob Tsimerman}
\begin{document}
\maketitle

\begin{abstract} 
We prove the Ax-Schanuel theorem for a general (pure) Shimura variety.
\end{abstract}

\bigskip

\centerline{CONTENTS}

\smallskip

\begin{itemize}

\item[1.] Introduction

\end{itemize}

\smallskip

\centerline{I. Basic Ax-Schanuel}

\smallskip

\begin{itemize}

\item[2.] Preliminaries

\item[3.] Some algebraicity results

\item[4.] Proof of Theorem 1.1.

\end{itemize}

\smallskip

\centerline{II. Ax-Schanuel with derivatives}

\smallskip

\begin{itemize}

\item[5.] Jet spaces

\item[6.] Differential equations

\item[7.] Schwarzians for Hermitian symmetric domains

\item[8.] Connection formula and automorphic functions 

\item[9.] Ax-Schanuel with derivatives and proof of Theorem 1.2

\item[10.] Proof of Theorem 9.1 and deduction of Theorem 1.3

\end{itemize}

\smallskip

\centerline{III. Ax-Schanuel in a differential field}

\smallskip

\begin{itemize}

\item[11.] Characterizing the uniformization map

\item[12.] Ax-Schanuel in a differential field and proof of Theorem 1.4

\smallskip

\item[]Acknowledgements

\item[]References

\end{itemize}

\section{Introduction}
Let $\Omega$ be a Hermitian bounded symmetric domain corresponding to a semisimple 
group $G$, and let $\Gamma\subset G(\Z)$ be a finite index subgroup. Then 
$X=\Gamma\backslash\Omega$ has the structure of a quasi-projective algebraic variety.
A variety $X$ arising in this way is called a {\it (connected, pure) Shimura variety\/}. 
We refer to \cite{DELIGNET, DELIGNEV} or \cite{MILNE}
for a detailed introduction to Shimura varieties.
A Shimura variety $X$ is endowed with a collection of {\it weakly special subvarieties\/}.
(There is a smaller collection of {\it special subvarieties\/}, which are 
precisely the weakly special subvarieties that contain a {\it special point\/};
these play no role in this paper). For a description of these see e.g. \cite{KUY}.

Let $q:\Omega\ra X$ be the natural projection map, and let $D\subset\Omega\times X$ 
be the graph of $q$.
Recall that $\Omega$ sits naturally as an open subset in its {\it compact dual\/} $\widehat{\Omega}$,
which has the structure of a projective variety. 
By an {\it algebraic subvariety\/} $W\subset \Omega\times X$ we mean a (complex analytically)
irreducible component of $\widehat{W}\cap (\Omega\times X)$ for some algebraic
subvariety $\widehat{W}\subset \widehat{\Omega}\times X$.
In the sequel, $\dim U$ denotes the complex dimension of a complex analytic set.
Though at some points we will refer implicitly to sets in real Euclidean spaces, any reference
to real dimension will be specifically noted.

Our basic result is the following. 

\begin{thm}\label{main} 
With notation as above, let $W\subset \Omega\times X$ be an algebraic subvariety. 
Let $U$ be an irreducible component of $W\cap D$ whose 
dimension  is larger than expected, that is, 
$$ \codim\, U < \codim\, W + \codim\, D, \leqno{(*)}$$ 
the codimensions being in $\Omega\times X$ or, equivalently,
$$
\dim W < \dim U + \dim X. \leqno{(**)}
$$
Then the projection of $U$ to $X$ is contained in a proper weakly special subvariety of $X$.
\end{thm}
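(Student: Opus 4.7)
The plan is to adapt the o-minimality plus monodromy plus Pila-Wilkie framework used to prove hyperbolic Ax-Lindemann (Klingler-Ullmo-Yafaev, Pila-Tsimerman) and the modular Ax-Schanuel (Pila-Tsimerman) to this setting.

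\textbf{Step 1 (Maximal atypicality).} I would first replace $(W,U)$ with a maximal atypical pair: enlarge $U$ to the largest irreducible atypical component of $W' \cap D$ containing $U$ as $W'$ varies among algebraic subvarieties through $W$, and then shrink $W$ to $\overline{U}^{\textrm{Zar}}$ inside $\widehat{\Omega} \times X$. After this normalization both $\pi_\Omega(U)$ and $\pi_X(U)$ are Zariski-dense in their Zariski closures $V \subset \widehat{\Omega}$ and $Y \subset X$, and the target becomes: $Y$ is contained in a proper weakly special subvariety of $X$.

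\textbf{Step 2 (A definable family).} Fix a Siegel fundamental set $\mathcal{F} \subset \Omega$ for $\Gamma$, for which $q|_{\mathcal{F}}$ is known to be definable in $\mathbb{R}_{\textrm{an,exp}}$ (Peterzil-Starchenko, Klingler-Ullmo-Yafaev). The set
\[
\Sigma = \{\, g \in G(\R) : (gW) \cap D \text{ has a component of dimension } \geq \dim U \text{ meeting } \mathcal{F} \times X \,\}
\]
is then definable. The key observation is that for $\gamma \in \Gamma$ one has $\gamma D = D$ (as $\Gamma$ acts on the $\Omega$-factor fixing $X$), so $\gamma U \subset \gamma W \cap D$ is still an atypical component of the same dimension; consequently every $\gamma \in \Gamma$ for which $\gamma U$ meets $\mathcal{F} \times X$ automatically lies in $\Sigma$.

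\textbf{Step 3 (Pila-Wilkie and algebraic stabilizer).} Lifting loops in the smooth locus of $Y$ through $q$ produces sequences of monodromy elements $\gamma \in \Gamma \cap \Sigma$ of polynomially bounded height. Pila-Wilkie applied to $\Sigma$ then forces $\Sigma$ to contain a connected, positive-dimensional semi-algebraic subset, and taking iterated products and Zariski closure extracts a positive-dimensional real algebraic subgroup $H^\ast \subseteq G$ that stabilizes $V$, and hence (by algebraicity in $\widehat{\Omega} \times X$) stabilizes $W$ and preserves $U$. Invoking the characterization of weakly special subvarieties as orbits under normal semisimple subgroups of the Mumford-Tate group of a generic point (Moonen; Ullmo-Yafaev), either $H^\ast$ directly exhibits $Y$ as lying in a proper weakly special subvariety, or the $H^\ast$-orbit of $U$ is an atypical component strictly larger than $U$, contradicting the maximality fixed in Step 1. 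In the first alternative the theorem is proved.

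\textbf{Main obstacle.} The delicate point is Step 3, in two respects. First, one must produce monodromy elements in $\Sigma$ with polynomial height: in Ax-Lindemann the analogous count is easy because the atypical set lives purely in $\Omega$ and $\Gamma$-translates act directly on it, but here $U \subset \Omega \times X$ mixes the two factors, so one must simultaneously track the deformation of $W$ under $g$ and the monodromy on the $X$-factor and show the resulting $\gamma$ still land in $\Sigma$ with height controlled by the loop. Second, one must promote the semi-algebraic arc delivered by Pila-Wilkie to a subgroup of the actual Shimura datum rather than a stray real algebraic subgroup of $G$; this compatibility with the $\Gamma$-structure, together with the Step 1 maximality used to rule out larger atypical components in the $H^\ast$-orbit, is what pins down the weakly special conclusion.
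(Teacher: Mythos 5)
Your proposal reproduces the broad architecture of the paper's argument (definability of $q$ on a Siegel set, a definable family of translates, Pila--Wilkie, monodromy), but two of its load-bearing steps are not justified and do not survive scrutiny.

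First, the point count. You propose to populate $\Sigma$ with integer points by ``lifting loops in the smooth locus of $Y$,'' claiming the resulting monodromy elements have polynomially bounded height. Pila--Wilkie needs $\gg T^{\delta}$ integer points of height $\le T$ for arbitrarily large $T$, and the monodromy group of $Y$ is merely a finitely generated Zariski-dense subgroup of $G(\Z)$ --- it can be thin, and a priori the number of its elements of height $\le T$ can grow like a power of $\log T$, which is useless. The paper's essential quantitative input here is entirely different: the Hwang--To theorem that the volume of $\gamma W\cap D\cap B(R)$ grows exponentially in the hyperbolic radius $R$, combined with a uniform upper bound (via the Siegel-coordinate argument of Klingler--Ullmo--Yafaev) on the volume of $(\gamma W)\cap D\cap(\FF\times X)$ over $\gamma\in I$. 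Comparing the two forces $U$ to meet exponentially many translates $\gamma\FF\times X$, and those $\gamma$ have height $e^{O(R)}$, which is exactly what Pila--Wilkie requires. Without this volume comparison your Step 3 has no source of points.

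Second, the use of the Pila--Wilkie output. The condition defining $\Sigma$ is about the \emph{dimension of an intersection} $(gW)\cap D$, not about containment, so the semialgebraic arc $C$ produced by Pila--Wilkie need not stabilize $W$ (or $V$), and ``taking iterated products'' to extract a subgroup $H^{*}$ --- the Ax--Lindemann move --- simply does not apply. The paper's key innovation is precisely how to proceed when $W_c:=c\cdot W$ varies along $C$: it runs a double induction, upward on $\dim W-\dim U$ and downward on $\dim U$, and splits into the cases (i) $U\subset W_c$ for $c\in C$, where one replaces $W$ by $W_c\cap W_{c'}$ and lowers $\dim W$, and (ii) $W_c\cap D$ moves, where one replaces $W$ by the Zariski closure of $C\cdot W$, raising both $\dim W$ and $\dim U$ by one. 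Your single maximality normalization in Step~1 does not substitute for this. Moreover, even in the case where a genuine stabilizer is obtained, your dichotomy omits the possibility that the connected Zariski closure $\Theta$ of the stabilizer is a nontrivial \emph{normal} factor of $G$; the paper handles this by descending a Hilbert-scheme locus to an algebraic family on $X\times X$ (via definable Chow), invoking Andr\'e--Deligne to prove $\Theta\trianglelefteq G(\R)$, splitting $\Omega=\Omega_\Theta\times\Omega_{\Theta'}$, and inducting on $\dim\Omega$ to show $U$ was not atypical after all. None of this machinery appears in your outline, and it is where the real work of upgrading Ax--Lindemann to Ax--Schanuel takes place.
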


If one takes $q: \Omega\rightarrow X$ to be the map 
$$
\exp: \C^n\rightarrow (\C^\times)^n,
$$
namely the Cartesian power of the complex exponential, then the statement is an
equivalent form of the Ax-Schanuel theorem of Ax \cite{AX}. 
In this form it is given a new proof in \cite{JTAX}.
Note however that $(\C^\times)^n$ is a ``mixed'' Shimura variety but not a ``pure'' one,
so this case is not formally covered by the above theorem.

One expects equality in $(*)$ and $(**)$ above,
on dimensional considerations, and such a component $U$ always has dimension
{\it at least\/} $\dim W-\dim X$ (see e.g. Lojasiewicz \cite{LOJASIEWICZ}, III.4.6). 
Thus the theorem asserts that all components of such intersections which are 
atypical in dimension are accounted for by weakly special subvarieties.

Since weakly special varieties are ``bi-algebraic'' in the sense of \cite{KUY}, 
they do indeed give rise to atypical intersections. For example, in the extreme 
case that $W=\Omega_1\times X_1$ where $X_1$ is a weakly special
subvariety of $X$ and $\Omega_1$ a connected component of its preimage in $\Omega$, 
we get $\dim W = \dim U + \dim X_1$.

As in earlier papers \cite{JTAX,KUY,PTAXJ}, the proof combines arguments
from complex geometry (Hwang-To), the geometry/group theory underlying Shimura varieties,
o-minimality, and monodromy (Deligne-Andr\'e).
The ingredients from o-minimality include the counting theorem of Pila-Wilkie, and results of
Peterzil-Starchenko giving powerful ``definable'' versions of the classical theorems of
Remmert-Stein and Chow.
Throughout the paper, we use `definable' to mean `definable in the o-minimal structure 
$\R_{\textrm{an,exp}}$'; see \cite{DM} and \cite{DMM}, where the o-minimality of 
$\R_{\textrm{an,exp}}$ is established, building on \cite{WILKIE}.

The crucial new ingredient in this paper is the observation of additional algebraicity 
properties, and for these we make further essential use of the above-mentioned results 
of Peterzil-Starchenko. However, there is also a purely complex analytic approach, 
which we allude to below.

We expect that Theorem 1.1, which is sometimes called the ``hyperbolic Ax-Schanuel 
conjecture''  \cite{DAWREN}, will find applications to the Zilber-Pink conjecture, 
where it can play a role analogous to that of the ``Ax-Lindemann theorem'', which it generalizes, 
in proving cases of the Andr\'e-Oort conjecture, see e.g. \cite{PTAS}.
One such application has been given by Daw-Ren \cite{DAWREN}. 
An application in a different direction is given
in \cite{ACZ}. A generalization to 
variations of Hodge structures is given by Bakker-Tsimerman \cite{BT}.

We will prove a strengthening of Theorem 1.1 involving the uniformizing function together with its
derivatives, along the same lines as the result of Pila-Tsimerman \cite{PTAS} for Cartesian powers of the $j$-function.
For this we first observe the following generalization of a result of Bertrand-Zudilin \cite{BEZU}
in the case of the Siegel modular varieties (but note that their result holds over any algebraically
closed subfield of $\C$; we do not have any control over the field of definition
of the differential equations).
The following result is established in \S9 as Corollary 9.3.
Let $N^+\subset G$ be the unipotent radical of an opposite parabolic subgroup of the complex parabolic subgroup $B\subset G$ defining the symmetric space $\widehat{\Omega} \ ( = G/B)$.

\begin{thm}
Let $z_1,\dots,z_n$ be an $N^+(\C)$-invariant
algebraic coordinate system on $\Omega$. 
Let $\{\phi_1,\dots,\phi_N\}$ be a $\C$-basis of modular functions.
Then the field generated by $\{\phi_i\}$ and their partial derivatives
with respect to the $z_j$ up to order $k\geq 2$ has transcendence 
degree over $\C$ equal to $\dim G$. Further, the transcendence degree is the same 
over $\C(z_1,\dots,z_n)$.
\end{thm}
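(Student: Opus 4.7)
The plan is to derive Corollary 9.3 as a direct consequence of Theorem 9.1 (Ax-Schanuel with derivatives, proven earlier in this section) together with the Schwarzian structures for Hermitian symmetric domains developed in Section 7; Theorem 9.1 will supply the lower bound and the Schwarzian equations will supply the upper bound on the transcendence degree.

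For the upper bound, the Schwarzian structures on $\widehat\Omega$ should express every partial derivative $\partial^\alpha \phi_i$ of order $|\alpha| \geq 3$ as an algebraic function of the $z_j$, the $\phi_i$, and the partial derivatives of the $\phi_i$ of order $\leq 2$. This reduces the problem to the $2$-jet case. Interpreting the $2$-jet geometrically via the Harish-Chandra embedding $\Omega \hookrightarrow \widehat\Omega = G^{\C}/B$ and the $N^+(\C)$-invariance of the coordinates $z_j$, the map $z \mapsto j_2 q(z)$ takes values in a Zariski-closed subvariety of dimension $\dim G$ in the appropriate $2$-jet bundle, yielding the upper bound.

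For the lower bound I would argue by contradiction: assume the transcendence degree is strictly less than $\dim G$. Then the image of the $k$-jet map $j_k q \colon \Omega \to J_k X$ is contained in an algebraic subvariety $V \subsetneq J_k X$ with $\dim V < \dim G$. Taking $W = \Omega \times V$ inside $\Omega \times J_k X$, the intersection of $W$ with the graph $D_k$ of $j_k q$ contains all of $D_k$, producing an irreducible component $U$ of dimension $n$ whose dimension exceeds the expected value $n + \dim V - \dim J_k X$. This triggers the atypicality hypothesis of Theorem 9.1, whose conclusion places the projection of $U$ in $J_k X$ inside a proper ``weakly special'' subvariety of the jet bundle. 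Combined with the fact (from the Schwarzian reduction) that the Zariski closure of $j_k q(\Omega)$ should have dimension $\dim G$, this contradicts the generic density of $q(\Omega)$ in $X$.

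The claim over $\C(z_1,\dots,z_n)$ would follow by noting that $z_1,\dots,z_n$ remain algebraically independent over the field $F$ generated by the $\phi_i$ and their partial derivatives: any algebraic dependence would cut the image of the jet map further and contradict the dimension count just established. The principal obstacle will be to verify that the Schwarzian relations of Section 7 yield genuine algebraic identities in the specific $N^+(\C)$-invariant coordinates $z_j$, and to ensure that Theorem 9.1 can be applied with the correct notion of weakly special subvariety in the jet bundle, so that the contradiction argument delivers the bound $\dim G$ rather than the weaker $\dim J_k X$.
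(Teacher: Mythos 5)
Your plan correctly identifies the two halves (an upper bound of $\dim G$ for the field generated by the $\phi_j^{(\nu)}$, and a lower bound), but the route differs from the paper's and the upper bound has a genuine gap. The paper does not invoke Theorem 9.1 at all: Corollary 9.3 is deduced directly from Lemma 9.2 (the statement that $G(\C)\cdot J^{\nd,n}_kD$ is algebraic and is the Zariski closure of $J^{\nd,n}_kD$), which immediately gives ${\rm tr.deg}_\C\,\C(\{z_i\},\{\phi_j^{(\nu)}\})=\dim G+n$; the identity $\,{\rm tr.deg}_\C\,\C(\{\phi_j^{(\nu)}\})=\dim G$ is then obtained by a fibre argument: the fibre of the Zariski closure $W\subset J^{\nd,n}_kX$ over a point $p\in X$ contains $\psi(\Gamma)$, hence $\psi(G(\C))$ by Zariski density of $\Gamma$, so $W$ contains $J_kq\big(G(\C)\cdot{\rm id}_k(o)\big)$ and, by definability, equals its closure, whose dimension is $\dim G$ because $G$ acts freely on nondegenerate $2$-jets (Proposition 7.3). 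Your lower bound via Theorem 9.1 (take $U$ the graph of $J_kq\circ{\rm id}_k$ over $\Omega$, $W$ its Zariski closure, and use that $q(\Omega)=X$ is not weakly special) does work and is essentially the paper's deduction of Corollary 9.5, but it is a much heavier tool than needed, and note that the conclusion of Theorem 9.1 concerns the projection of $U$ to the base $X$ being weakly special in $X$, not a ``weakly special subvariety of the jet bundle''; also your $W=\Omega\times V$ only makes sense after embedding $\Omega$ into $J_k\Omega$ via the algebraic section ${\rm id}_k$, else the dimension count $\dim W<\dim U+\dim G$ fails.

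The gap is in the upper bound. The Harish--Chandra picture and the $N^+$-invariance of the coordinates only show that the image of $z\mapsto J_kq({\rm id}_k(z))$ \emph{upstairs}, i.e. the jet section in $J_k\widehat\Omega$ translated around by $G$, lies in the orbit $G(\C)\cdot{\rm id}_k(o)$ of dimension $\dim G$. What you need is that the image \emph{downstairs} in $J_kX$ is contained in an \emph{algebraic} subvariety of dimension $\dim G$. The image is an $n$-dimensional complex analytic set, and the Zariski closure of a transcendental analytic set can have much larger dimension (the graph of $e^z$ has Zariski closure $\C^2$), so no purely formal argument gives the bound. This descent is exactly the content of Lemma 9.2, and it requires the Peterzil--Starchenko definable Chow theorem (via definability of $q$ on a fundamental domain, the proper mapping theorem, and the boundary analysis in the proof of Lemma 9.2). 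Your closing caveat about ``verifying that the Schwarzian relations yield genuine algebraic identities'' is pointing at the right obstacle, but the resolution is not the Schwarzian formalism of \S7 alone; it is the o-minimal GAGA machinery of \S2--3 applied to the $G$-saturation of the jet graph. Relatedly, the reduction of derivatives of order $\ge 3$ to those of order $\le 2$ is a \emph{consequence} of this same argument (it is the Remark following Corollary 9.3, resting on Proposition 7.3), not an independently available input.
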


To frame our result we need to study the
form of the differential equations satisfied by the uniformization map, 
for which we introduce and study, in \S7 and \S8, the 
{\it Schwarzian derivative\/} for a Hermitian symmetric domain.
Differential equations associated with covering maps are studied 
by Scanlon \cite{SCANLON}, who shows under quite general assumptions
that one gets algebraic differential equations. 
A key ingredient there, as here, is definability
and the results of Peterzil-Starchenko. However, our focus is on getting more specific
information (such as 1.2 above)  in the special case of Shimura varieties.
On this circle of differential ideas see also Buium \cite{BUIUM}.
For a description of the Schwarzian for ${\rm PSL}_m$ see \cite{YOSHIDA}.

Our Ax-Schanuel theorem for $q$ and its derivatives (Theorem 9.1) is most naturally stated in the
setting of {\it jet spaces\/}. These are introduced in \S5 and \S6. 
Here we give the following jet-space-free consequence of Theorem 9.1.

\begin{thm}\label{trdeg}

Let $V\subset \Omega$ be an irreducible complex analytic variety, not contained in a proper 
weakly special subvariety. Let  $\{z_i, i=1,\ldots, n\}$ be an algebraic 
coordinate system on $\Omega$. Let $\{\phi_j^{(\nu)}\}$ consist of a basis $\phi_1,\ldots, \phi_N$ of 
modular functions, all defined at at least one point of $V$,
together with their partial derivatives with respect to the $z_j$ 
up to order $k\ge 2$. Then
$$
{\rm tr.deg.}_{\C}\C\big(\{z_i\}, \{\phi_j^{(\nu)}\}\big) \geq \dim G + \dim V
$$	
where all functions are considered restricted to $V$.

\end{thm}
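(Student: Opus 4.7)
The plan is to deduce Theorem~\ref{trdeg} from the jet-space Ax-Schanuel statement Theorem 9.1, by translating the transcendence degree into the dimension of a Zariski closure and then applying Theorem 9.1 to a hypothetically atypical intersection. After shrinking $V$ to a nonempty open subset on which every $\phi_j$ (and hence every $\phi_j^{(\nu)}$) is defined, the transcendence degree in the statement is unchanged.

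To set up the geometry, let $M$ be the number of functions $\phi_j^{(\nu)}$ and consider
$$
\Psi:\Omega\longrightarrow \hat{\Omega}\times \PP^M,\qquad \omega\longmapsto \bigl(\omega,(\phi_j^{(\nu)}(\omega))\bigr).
$$
Write $D:=\Psi(\Omega)$ for its analytic image and let $Y\subset \hat{\Omega}\times\PP^M$ be the Zariski closure of $D$. By Theorem 1.2, the transcendence degree of $\C(\{z_i\},\{\phi_j^{(\nu)}\})$ over $\C$ is exactly $n+\dim G$, so $\dim Y=n+\dim G$; meanwhile $D$ is an analytic subset of $Y$ of dimension $n$ (it sits inside $Y$ as the analytic ``graph'' cut out by the algebraic differential equations of Section~9), and $\Psi(V)\subseteq D$ has analytic dimension $\dim V$. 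The transcendence degree sought in Theorem~\ref{trdeg} equals $\dim W$, where $W\subseteq Y$ is the Zariski closure of $\Psi(V)$; our goal is to prove $\dim W\geq \dim G+\dim V$.

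Suppose for contradiction that $\dim W<\dim G+\dim V$. Since $\Psi(V)\subseteq W\cap D$, take an irreducible component $U$ of $W\cap D$ containing $\Psi(V)$, so that $\dim U\geq \dim V$. Inside $Y$ (of dimension $n+\dim G$) the codimension of $D$ is $\dim G$, and the atypicality condition $\codim U<\codim W+\codim D$ of Theorem 9.1 rearranges to $\dim W<\dim U+\dim G$, which holds by our assumption combined with $\dim U\geq\dim V$. Theorem 9.1 then forces the projection of $U$ to $X$ to lie in a proper weakly special subvariety $X_0\subsetneq X$. Since $\Psi(V)\subseteq U$ projects to $q(V)$, $V$ is contained in an irreducible component of $q^{-1}(X_0)$, which is a proper weakly special subvariety of $\Omega$, contradicting the hypothesis on $V$.

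The main obstacle is not the formal deduction above, but rather the correct identification of $Y$ and $D$ within the jet-space formalism of Sections~5--6 so that Theorem 9.1 applies directly; in particular one needs the codimension count to produce $\dim G$ rather than some larger jet-bundle dimension on the right of the atypicality inequality. This is precisely what Theorem 1.2 ensures by fixing $\dim Y=n+\dim G$, and is also the ultimate source of the $\dim G$ term in the conclusion of Theorem~\ref{trdeg}.
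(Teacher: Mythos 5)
Your overall strategy --- form the graph of the functions $\{z_i\},\{\phi_j^{(\nu)}\}$ over $V$, let $W$ be its Zariski closure so that $\dim W$ equals the transcendence degree in question, observe that the graph itself has dimension $\dim V$ and lies in the relevant ``diagonal'', and invoke Theorem 9.1 --- is exactly the paper's. But there is a genuine gap at precisely the step you yourself flag as ``the main obstacle'': Theorem 9.1 is a statement about algebraic subvarieties $W\subset J^{\nd,r}_k\Omega\times J^{\nd,r}_kX$ and components of $W\cap J^{\nd,r}_kD$, where $J^{\nd,r}_kD$ is the graph of $J^r_kq$; it says nothing about subvarieties of your ambient space $\widehat\Omega\times\PP^M$ and their intersections with $\Psi(\Omega)$. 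The codimension bookkeeping inside $Y$, and the appeal to Theorem 1.2 to get $\dim Y=n+\dim G$, do not supply this translation: knowing $\dim Y$ does not by itself produce an Ax--Schanuel statement in the ambient $\widehat\Omega\times\PP^M$. If one tries to transport your $W$ into the jet space by pulling back along the natural rational map $J_k\Omega\times J_kX\dashrightarrow\widehat\Omega\times\PP^M$ (projection on the first factor, chain-rule expressions of the $\phi_j^{(\nu)}$ on the second), positive-dimensional fibres enter both $\dim W$ and $\dim U$, and the atypicality inequality has to be re-derived; this is exactly the work you have deferred rather than done.

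The paper sidesteps the issue by never leaving the jet space: it sets $U=(\id_2\times J_2q\circ\id_2)(V)\subset J_2\Omega\times J_2X$, which is an analytic subset of $J_2D$ of dimension $\dim V$ (the section $\id_2$ adds no differential information about $V$, only the constant jet data), notes that the coordinate functions of $J_2\Omega\times J_2X$ restricted to $U$ generate precisely $\C\big(\{z_i\},\{\phi_j^{(\nu)}\}\big)\big|_V$ --- so the Zariski closure $W$ of $U$ has $\dim W$ equal to the transcendence degree --- and then applies Theorem 9.1 verbatim to $W$ and the component of $W\cap J_2D$ containing $U$, concluding as you do via the correspondence between weakly special subvarieties of $X$ and of $\Omega$. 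Your argument becomes correct once you make this substitution; as written, the key identification is asserted rather than proved, and your suggestion that Theorem 1.2 ``ensures'' it is not accurate.
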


We also give, in \S12, a version of Ax-Schanuel in the setting of a differential field, 
and show that it in turn directly implies the jet version. This depends on the fact that 
all solutions of the relevant differential system
are $G$-translates of $q$, and this is due to the provenance of the system in properties of the
Schwarzian associated with $G$ and $\Omega$.

Let $(K, \mathcal{D}, C)$ be a differential field with   a finite set $\mathcal{D}$
of commuting derivations and constant field $C$.
We consider $K$-points $(z,x,y)$ of suitable varieties over $C$
and establish a differential algebraic condition under which such a point corresponds
to a locus $z$ in $\widehat{\Omega}$, whose dimension equals the {\it rank\/} 
of $z$, a corresponding locus $x$ in $X$ under 
some $G(\C)$-translate $Q$ of $q$, and the restrictions $y$ of suitable derivatives of 
$Q$ to the locus $x$.

The precise definition of such a {\it uniformized locus of rank $k$} in $K$
is given \S12,  after the differential algebraic condition is established in \S11. 
Under suitable
identifications, we can also speak of $x$ being {\it contained in a proper weakly special
subvariety\/} of $X$. With these notions, the differential version of Ax-Schanuel
may be stated as follows.

\begin{thm}[Differential Ax-Schanuel]
Let $G, q, (K, \mathcal{D}, C)$ be as above.
Let $(z, x, y)$ be a uniformized locus of rank $k$. Then
$$
{\rm tr. deg.}_CC(z, x, y)\ge {\rm rank}(z) +\dim G
$$
unless $x$ is contained in a proper weakly special subvariety.
\end{thm}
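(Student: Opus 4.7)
The plan is to reduce Theorem~1.4 to the complex analytic Ax-Schanuel with derivatives (Theorem~1.3, or equivalently the jet-space version Theorem~9.1). The underlying idea is that the differential algebraic axioms defining a uniformized locus of rank $k$ in $K$ force the abstract data $(z,x,y)$, after a suitable embedding of $K$ into a field of meromorphic functions, to be realized genuinely by the uniformization map $q:\Omega\to X$ (up to a $G(\C)$-translate) along a $k$-dimensional complex analytic subvariety of $\Omega$.

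First, I would embed the differential field $(K,\mathcal{D},C)$ into a differential field of germs of meromorphic functions on a polydisk $\Delta\subset\C^k$ with $k=\mathrm{rank}(z)$; after a change of basis for $\mathcal{D}$ (using that the derivations commute) we may assume they correspond to the coordinate vector fields $\partial/\partial t_i$ on $\Delta$, with constants embedding into $\C$. Under this embedding, $z$ becomes a meromorphic map $\Delta\to\widehat{\Omega}$ whose image $V$ has complex dimension equal to $\mathrm{rank}(z)=k$, while $x$ and $y$ become tuples of meromorphic functions on $\Delta$.

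Next, I would invoke the characterization of the uniformization map developed in \S11: the differential equations defining a uniformized locus are satisfied analytically only by tuples of the form $(z,\, Q\circ z,\, \partial^{\nu}Q\circ z)$, where $Q$ is a $G(\C)$-translate of $q$ and $\nu$ ranges over multi-indices up to the prescribed order. This identifies, on $\Delta$, the function $x$ with $Q\circ z$ and $y$ with the restriction to $V$ of the partial derivatives of $Q$. This step is the main obstacle: the rigidity of the Schwarzian system developed in \S7--\S8, which exploits both the geometry of the Hermitian symmetric domain and the representation theory of $G$, must be sharp enough to recognize every abstract differential-algebraic solution as analytically arising from $q$, and nothing else.

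Finally, after the embedding the transcendence degree ${\rm tr.deg.}_C C(z,x,y)$ equals the transcendence degree over $\C$ of the field generated by the ambient coordinates, a basis of modular functions, and their partial derivatives up to the relevant order, all restricted to $V$. Theorem~1.3 applied to $V$ then gives that this transcendence degree is at least $\dim V+\dim G=\mathrm{rank}(z)+\dim G$, provided $V$ is not contained in a proper weakly special subvariety of $\Omega$. Under the identifications set up in \S12, this exceptional clause translates precisely into the statement that $x$ lies in a proper weakly special subvariety of $X$, completing the reduction.
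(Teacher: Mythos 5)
Your proposal is correct and follows essentially the same route as the paper: a Seidenberg embedding over $\iota$ of a suitable finitely generated differential subfield, the \S11 characterization forcing $\overline{x}=q_g(\overline{z})$ with $\overline{y}$ the restricted partial derivatives of $q_g$, and then the complex-analytic Ax-Schanuel with derivatives applied to the resulting $k$-dimensional locus. The only cosmetic differences are that the paper argues by contraposition and applies Theorem 9.1 directly to an explicitly constructed jet locus $(\overline{z},1_{n\times n},0,\dots;\overline{x},\overline{y})$ rather than its consequence Theorem 1.3, and that your claimed \emph{equality} of transcendence degrees should strictly be the inequality $\mathrm{tr.deg.}_C C(z,x,y)\ge \mathrm{tr.deg.}_{\C}$ of the embedded functions (relations over $C$ push forward under the embedding), which is all the argument requires.
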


\bigbreak

\centerline{I. Basic Ax-Schanuel}

\bigskip

This part is devoted to the proof of Theorem 1.1.

\section{Preliminaries}

We gather some preliminary remarks, definitions, and results. 

\medbreak
\noindent
{\it 2.1. Shimura varieties}

\medskip

According to the definition given, a Shimura variety $X$ may not be smooth,
and the covering $q: \Omega\rightarrow X$ may be ramified, if $\Gamma$ contains
elliptic elements. For example, $j: \Hh\rightarrow\C$ is ramified at ${\rm SL}_2(\Z)i$
and ${\rm SL}_2(\Z)\rho$, where $\rho=\exp(2\pi i/3)$, even though in this case the quotient $\C$ is still smooth.

By passing to a finite index subgroup we may always assume that the uniformization
is unramified and the Shimura variety is smooth, and hence a complex manifold.
This does not affect the validity of 1.1.
Hence we may and do assume throughout that $X$ is smooth and that $q:\Omega\rightarrow X$ is 
unramified.

\medbreak
\noindent
{\it 2.2 Definability}
\medskip

The definition and basic results
on o-minimal structures over a real closed field may be found in \cite{PSICM}. 
As already mentioned,
`definable' will mean `definable in the o-minimal structure $\R_{\rm an, exp}$'.

Let $\mathcal {F}$ be the classical Siegel domain for the action of 
$\Gamma$ on $\Omega$. Then
the uniformization $q:\Omega\rightarrow X$ restricted to $\mathcal{F}$
is {\it definable\/}.
For a general Shimura variety this result is due to Klingler-Ullmo-Yafaev \cite{KUY},
generalizing results of Peterzil-Starchenko \cite{PSDMJ} for moduli spaces of abelian varieties.

We shall need the following results, which can be see as definable generalizations of GAGA-type theorems.

\begin{thm}\label{defremmertstein}[``Definable Remmert-Stein'', \cite{PSICM}, Theorem 65.3]
Let $M$ be a definable complex manifold and $E$ a definable complex analytic subset
of $M$. If $A$ is a definable, complex analytic subset of $M-E$ then
its closure $\overline{A}$ is a complex analytic subset of $M$. \ \qed
\end{thm}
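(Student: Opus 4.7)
The statement is local, so I work near a point $p \in E \cap \overline{A}$, reducing to the case $M = \Delta^n$ (a polydisk) with $p = 0$. By decomposing $A$ into pure-dimensional pieces (each definable and complex analytic on $M \setminus E$), I may assume $A$ has pure complex dimension $d$. The closure $\overline{A}$ is then a closed definable subset of $M$ that coincides with the complex analytic set $A$ on $M \setminus E$, and by the o-minimal frontier inequality, $\partial A := \overline{A} \setminus A$, which is contained in $E$, has real dimension at most $2d - 1$.

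The goal is to verify the hypothesis of a Peterzil-Starchenko structural theorem: a closed definable subset of $\C^n$ whose topologically regular locus is a complex submanifold must itself be complex analytic. To check this hypothesis for $\overline{A}$, I use $C^\omega$-cell decomposition in $\R_{\rm an, exp}$ to stratify $\overline{A}$ into real-analytic cells. The top-dimensional cells have real dimension $2d$; for each such cell $C$, the intersection $C \cap A$ is open and dense in $C$ (because $C \cap \partial A$ has real dimension strictly less than $\dim_\R C = 2d$). On this open dense subset, the tangent spaces of $C$ are complex subspaces of $T\C^n$, a condition which, in local real-analytic coordinates on $C$, is a real-analytic condition on the Jacobian of the parametrization. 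Since $C$ is connected and the condition holds on a dense open, it holds throughout $C$ by real-analytic continuation. Hence the topologically regular locus $\overline{A}^{\rm reg}$ is a pure $d$-dimensional complex submanifold of $M$.

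The main obstacle is the appeal to Peterzil-Starchenko itself. In the definable category one cannot invoke classical Remmert-Stein or Shiffman directly: the singular locus $\overline{A} \setminus \overline{A}^{\rm reg}$ is a priori only of smaller \emph{real} dimension, not smaller complex dimension, so the classical dimension hypotheses are not available. It is exactly the structural theorem of Peterzil-Starchenko — whose proof is the technical heart of \cite{PSICM}, using definable choice, Levi-form criteria, and a careful o-minimal analysis of limits of complex-analytic structure along the singular locus — that permits this step. Accepting it as a black box, applying it to $\overline{A}$ with the complex submanifold structure on $\overline{A}^{\rm reg}$ established above yields the conclusion that $\overline{A}$ is a complex analytic subset of $M$, completing the proof.
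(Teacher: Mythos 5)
The paper does not prove this statement at all: it is imported verbatim from Peterzil--Starchenko \cite{PSICM} (whence the \qed placed directly after the statement), so there is no in-paper argument to compare yours against. Judged on its own terms as a reconstruction of the Peterzil--Starchenko proof, your proposal has a genuine gap.

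Concretely: nowhere in your argument do you use the hypothesis that $E$ is \emph{complex analytic}; you only use that it is definable and closed. But the theorem is false for a general definable closed $E$. Take $M=\C$, $E=\R$ and $A=\{z:\operatorname{Im}z>0\}$, a connected component of $M-E$ and hence a definable complex analytic subset of $M-E$ (with $d=1$); then $\overline A=\{\operatorname{Im}z\ge 0\}$ is not complex analytic in $\C$. (If you prefer $A$ to be a proper subvariety, take $M=\C^2$, $E=\R\times\C$, $A=\{(z,0):\operatorname{Im}z>0\}$.) This example satisfies everything you actually verify: $\partial A=\R$ has real dimension $1=2d-1$, the top-dimensional cells of $\overline A$ are complex submanifolds, and the topologically regular locus of $\overline A$ is the open half-plane, a complex $1$-manifold. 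Hence the structure theorem you invoke as a black box, in the form you state it (``topologically regular locus a complex submanifold implies complex analytic''), is false. The usable Peterzil--Starchenko structure theorem requires the complement of the locus where $\overline A$ is a $d$-dimensional complex submanifold to have real dimension at most $2d-2$ (as the singular set of a genuine pure $d$-dimensional analytic set must); your frontier estimate only yields $2d-1$. Improving $2d-1$ to $2d-2$ --- that is, excluding a $(2d-1)$-dimensional ``edge'' of $\overline A$ lying inside $E$ --- is precisely where the complex analyticity of $E$ has to enter, and it is the actual content of the theorem. As written, your argument would prove a false statement, so the missing step is not cosmetic.
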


The following is a slight generalization of a theorem stated by Peterzil-Starchenko
\cite{PSICM}, Theorem 4.5,
which may be proved by combining their statement with 
``Definable Remmert-Stein'' above.
This strengthening has also been observed by Scanlon \cite{SCANLON}, 
Theorem 2.11, and, in a slightly less general form,  in \cite{PTAS}.

\begin{thm}\label{defchow}[``Definable Chow'']
Let $Y$ be a quasiprojective algebraic variety, and let $A\subset Y$ be definable,  
complex analytic, and closed in $Y$. Then $A$ is algebraic.
\end{thm}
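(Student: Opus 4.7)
The plan is to reduce the quasiprojective case to the projective version of Definable Chow due to Peterzil-Starchenko (their ICM Theorem 4.5), using Definable Remmert-Stein (Theorem 2.1) to first extend $A$ to a closed complex analytic subset of a projective space. Concretely, since $Y$ is quasiprojective, I fix an embedding $Y\hookrightarrow \mathbb{P}^n$ as a locally closed algebraic subvariety. Let $\overline{Y}$ be the Zariski closure of $Y$ in $\mathbb{P}^n$, and set $E := \overline{Y}\setminus Y$. Because $Y$ is locally closed in $\mathbb{P}^n$, it is open in $\overline{Y}$, so $E$ is a closed algebraic, and in particular definable complex analytic, subset of $\mathbb{P}^n$. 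Moreover $Y = \overline{Y}\cap(\mathbb{P}^n\setminus E)$ is closed in $\mathbb{P}^n\setminus E$.

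The next step is to apply Definable Remmert-Stein with $M=\mathbb{P}^n$ and the above $E$. Since $A$ is closed in $Y$ and $Y$ is closed in $M\setminus E$, the set $A$ is a definable, complex analytic, closed subset of $M\setminus E$. Theorem 2.1 then produces a definable complex analytic subset $\overline{A}\subset\mathbb{P}^n$, namely the topological closure of $A$ in $\mathbb{P}^n$. The point is that a priori $A$ may accumulate on the boundary $E$, and Remmert-Stein is the tool that makes the closure still analytic.

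Now I appeal to the projective form of Definable Chow stated in \cite{PSICM}, which says that a definable closed complex analytic subset of $\mathbb{P}^n$ is algebraic. This applies to $\overline{A}$ and shows that $\overline{A}$ is a closed algebraic subvariety of $\mathbb{P}^n$. Finally, because $A$ is closed in $Y$, one verifies that $A = \overline{A}\cap Y$: the inclusion $A\subset\overline{A}\cap Y$ is clear, and if $p\in\overline{A}\cap Y$ then $p$ lies in the closure of $A$ inside $Y$, which is $A$ itself. Hence $A$ is cut out in $Y$ by the algebraic subvariety $\overline{A}\subset\mathbb{P}^n$, so $A$ is algebraic in $Y$.

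The only delicate point is checking the hypotheses of Definable Remmert-Stein, and in particular that the closure operation does not destroy closedness or analyticity at the boundary $E$; once that is set up correctly the rest is formal. I do not expect any serious obstacle beyond a careful bookkeeping of ``closed in $Y$'' versus ``closed in $\mathbb{P}^n\setminus E$'' versus ``closed in $\mathbb{P}^n$.''
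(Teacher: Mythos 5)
Your argument is correct and follows essentially the same route as the paper's proof: extend $A$ across the boundary $E=\overline{Y}\setminus Y$ via Definable Remmert--Stein, apply Chow (classical or the Peterzil--Starchenko definable version) to the resulting closed analytic subset of $\mathbb{P}^n$, and recover $A$ as $\overline{A}\cap Y$. The only cosmetic difference is that the paper first reduces to an affine open subset of $Y$, while you work directly with the locally closed embedding; both are fine.
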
	

\begin{proof} We follow the proof in \cite{PTAS}.
By taking an affine open set in $Y$, it suffices to consider the case 
where  $Y$ is an affine subset of projective space. Then 
$Y$ is a definable, complex analytic subset of $M\backslash E$ where $M$ is a 
projective variety and $E$ is a closed algebraic subset of $M$. 
Then, by ``Definable Remmert-Stein'', above, 
the closure of $A$ in $M$ is a definable, complex analytic subset of $M$, 
hence complex analytic in the ambient projective space.
Thus $A$ must be algebraic by Chow's theorem,
or by the Peterzil-Starchenko version \cite{PSICM}, Theorem 4.5.
\end{proof}

We say that a set is ``constructible complex analytic" if it is in the boolean algebra generated by closed, complex analytic varieties.

\begin{cor}\label{consdefchow}
Let $Y$ be a quasiprojective algebraic variety, and let $A\subset Y$ be definable,  
constructible complex analytic. Then $A$ is constructible algebraic.
\end{cor}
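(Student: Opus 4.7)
The plan is to reduce the statement to Theorem~\ref{defchow} by induction on $\dim A$. The base case $\dim A = 0$ is immediate: a zero-dimensional complex analytic set is discrete, and by o-minimality any discrete definable subset of $Y$ is finite, hence algebraic.

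For the inductive step, the key task is to verify that the analytic closure $\overline{A}$ in $Y$ is itself complex analytic. Since $A$ lies in the boolean algebra generated by closed complex analytic subsets of $Y$, one may write
$$
A = \bigcup_{i=1}^{m} (F_{i} \setminus G_{i}),
$$
with each $F_{i}, G_{i}$ a closed complex analytic subset of $Y$. After passing to irreducible components of the $F_{i}$ and discarding any piece with $F_{i} \subseteq G_{i}$, we may assume each $F_{i}$ is irreducible and $F_{i} \cap G_{i}$ is a proper analytic subset of $F_{i}$, hence nowhere dense in $F_{i}$. Then $\overline{F_{i} \setminus G_{i}} = F_{i}$, so $\overline{A} = \bigcup_{i=1}^{m} F_{i}$ is a closed complex analytic subset of $Y$.

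Since $\overline{A}$ is also definable (closure being a definable operation on definable sets), Theorem~\ref{defchow} applies and $\overline{A}$ is algebraic. Set $B := \overline{A} \setminus A$. This is definable as a difference of definable sets, and it lies in the boolean algebra of closed complex analytic sets (being $\overline{A} \cap A^{c}$), so it is constructible complex analytic. Because $A$ is dense in $\overline{A}$, the set $B$ is nowhere dense in $\overline{A}$, giving $\dim B < \dim \overline{A} = \dim A$. By the inductive hypothesis $B$ is constructible algebraic, and hence $A = \overline{A} \setminus B$ is a difference of an algebraic set and a constructible algebraic set, hence constructible algebraic.

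The only substantive step is the verification that $\overline{A}$ is complex analytic, which hinges on the reduction to irreducible components of the $F_{i}$; once that is done, the argument is a direct appeal to Theorem~\ref{defchow} together with the inductive hypothesis. No further input from Peterzil--Starchenko beyond what is already used in Theorem~\ref{defchow}, and no input from o-minimality beyond the finiteness of discrete definable sets, should be required.
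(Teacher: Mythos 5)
Your argument is correct and is exactly the paper's approach (the paper's entire proof is the single line ``Follows from Theorem~\ref{defchow} by induction on dimension''); you have simply supplied the details. One justification should be tightened: the inference ``$A$ dense in $\overline{A}$, hence $B=\overline{A}\setminus A$ is nowhere dense'' is not valid in general topology, but the bound $\dim B<\dim A$ that you need follows directly from your own decomposition, since $B\subseteq\bigcup_i (F_i\cap G_i)$ and each $F_i\cap G_i$ is a proper closed analytic subset of the irreducible $F_i$, hence of strictly smaller dimension.
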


\begin{proof} Follows from Theorem \ref{defchow} by induction on dimension. 
\end{proof}


\section{Some algebraicity results}

\medskip

We have the uniformization $q:\Omega\rightarrow X$
in which $X$ is a quasi-projective variety, the map $q$ is complex analytic and surjective.
It is further invariant under the action of some discrete group $\Gamma$ of
holomorphic automorphisms of $\Omega$, and as noted above the restriction of
$q$ to a suitable fundamental domain $\FF$ for this action is definable.

Suppose that $V\subset X$ is a relatively closed algebraic subvariety. 
Then $q^{-1}(V)\subset\Omega$ is a closed complex analytic set which 
is $\Gamma$-invariant, and definable on a fundamental domain $\FF$.
The same statement holds for the uniformization 
$$q\times{\rm id}: \Omega\times X\rightarrow X\times X$$
and $V\subset X\times X$,
which is invariant under $\Gamma\times\{{\rm id}\}$, where now 
$q\times{\rm id}$ is definable on $\mathcal{F}\times X$.
We observe that the converse holds.

\begin{thm}\label{alginXxX}
Let $A\subset\Omega\times X$ be a closed, complex analytic set which is 
$\Gamma\times\{{\rm id}\}$-invariant, and such that $A\cap\FF\times X$ is definable.
Then $(q\times{\rm id})(A)\subset X\times X$ is a closed algebraic subset.
\end{thm}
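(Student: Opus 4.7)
The plan is to verify that $B := (q\times\id)(A)\subset X\times X$ is simultaneously closed, complex analytic, and definable, and then to invoke ``Definable Chow'' (Theorem~\ref{defchow}) in the quasi-projective variety $X\times X$ to conclude that $B$ is algebraic. Each of the three properties will be a direct consequence of the $\Gamma\times\{\id\}$-invariance of $A$ combined with the two standing hypotheses already put in place: $q$ is an unramified covering (by the reduction in \S2.1), and $q$ is definable when restricted to the Siegel fundamental domain $\FF$.

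For definability, the $\Gamma\times\{\id\}$-invariance of $A$ gives
$$
B \;=\; (q\times\id)\bigl(A\cap(\FF\times X)\bigr).
$$
The set $A\cap(\FF\times X)$ is definable by hypothesis, and $(q\times\id)|_{\FF\times X}$ is definable since $q|_{\FF}$ is; hence $B$ is definable. For closedness, the map $q\times\id$ is the topological quotient by the properly discontinuous action of $\Gamma\times\{\id\}$ on $\Omega\times X$, and $(q\times\id)^{-1}(B)=A$ by invariance; since $A$ is closed, $B$ is closed in $X\times X$. For complex analyticity, $q\times\id$ is a local biholomorphism (as $q$ is unramified), so near any point of $B$ one can pick a biholomorphic sheet of the preimage on which $(q\times\id)^{-1}(B)$ coincides with $A$; pushing that piece of $A$ forward under the local biholomorphism exhibits $B$ as complex analytic in a neighborhood of the chosen point.

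With these three properties established, Theorem~\ref{defchow} applied to $B\subset X\times X$ immediately gives that $B$ is algebraic, and the proof is complete. There is no really substantive obstacle here; the content is the observation that the definability hypothesis on $A\cap(\FF\times X)$ is exactly what is needed to convert the Chow-type analytic/algebraic dictionary into a statement about $\Gamma$-invariant sets. The subtlest point to check is that the image really is complex analytic (rather than only constructible analytic), but this follows from $q\times\id$ being a local biholomorphism and not merely a topological quotient.
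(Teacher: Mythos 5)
Your proposal is correct and follows exactly the paper's own argument: the paper likewise asserts that $(q\times{\rm id})(A)$ is closed and complex analytic, notes that $(q\times{\rm id})(A)=(q\times{\rm id})(A\cap\FF\times X)$ gives definability, and concludes by ``Definable Chow''. You have simply filled in the (correct) details for closedness and analyticity that the paper leaves implicit, using that $q$ is an unramified covering and that $\Gamma\times\{\id\}$-invariance makes all sheets give the same local image.
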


\begin{proof}
The image $(q\times{\rm id})(A)$ is closed and complex analytic in $X\times X$. Since
$(q\times{\rm id})(A) = (q\times{\rm id})(A\cap \FF\times X)$ it is also definable, 
and so it is algebraic by ``Definable Chow'' (\ref{defchow}).
\end{proof}

\begin{cor}
Let $A\subset\Omega\times X$ be a closed, constructible complex analytic set which is 
$\Gamma\times\{{\rm id}\}$-invariant, and such that $A\cap\FF\times X$ is definable.
Then $(q\times{\rm id})(A)\subset X\times X$ is a constructible algebraic subset.
\end{cor}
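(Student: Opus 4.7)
The natural plan is to follow the same pattern as the earlier Corollary~\ref{consdefchow}: reduce to the closed case, Theorem~\ref{alginXxX}, by induction on $\dim A$. Any constructible complex analytic set $A$ admits an analytic closure $\overline A$, which for constructible sets coincides with the topological closure and is the smallest closed complex analytic set containing $A$. The base case $\dim A = 0$ is immediate: $A\cap(\FF\times X)$ is definable and $0$-dimensional, hence finite, so $A$ is a finite union of $\Gamma\times\{\id\}$-orbits of points and $(q\times\id)(A)$ is a finite subset of $X\times X$.

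For the inductive step, I would observe that $\overline A$ is closed complex analytic, $\Gamma\times\{\id\}$-invariant (since $A$ is and $\Gamma$ acts by homeomorphisms), and that $\overline A\cap(\FF\times X)$ is definable; Theorem~\ref{alginXxX} then gives that $(q\times\id)(\overline A)$ is closed algebraic. Let $B=\overline A\setminus A$. Then $B$ is again constructible complex analytic, $\Gamma\times\{\id\}$-invariant, definable on $\FF\times X$, and satisfies $\dim B<\dim\overline A=\dim A$ (a standard fact for constructible sets: $A$ is open in $\overline A$ away from a proper analytic subset). By induction, $(q\times\id)(B)$ is constructible algebraic. Since $A$ and $B$ are disjoint and both $\Gamma\times\{\id\}$-invariant, while $q\times\id$ is the quotient by this action, their images under $q\times\id$ are disjoint, so
$$
(q\times\id)(A)=(q\times\id)(\overline A)\setminus(q\times\id)(B),
$$
a difference of a closed algebraic set and a constructible algebraic set, hence constructible algebraic.

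The one genuinely technical point, and the main obstacle I would expect, is checking definability of $\overline A\cap(\FF\times X)$. This should follow from the Siegel property of $\FF$: by $\Gamma$-invariance of $A$ and definability on $\FF\times X$, the set $A$ is definable on $\bigcup_{\gamma\in S}\gamma\FF\times X$ for the finite set $S$ of Siegel neighbors of $\FF$, hence on an open neighborhood of $\FF\times X$; taking closure is a definable operation in $\R_{\rm an,exp}$, so $\overline A$ is definable there, and in particular $\overline A\cap(\FF\times X)$ is definable. This and the dimension drop $\dim(\overline A\setminus A)<\dim A$ are the two small facts on which the induction rests.
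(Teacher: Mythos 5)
Your argument is correct, and it is the same basic strategy the paper intends (the paper's proof is the one-line ``follows from Corollary \ref{consdefchow} as above'', i.e.\ induction on dimension reducing to the closed case via definable Chow). The one genuine difference is where you run the induction. You decompose $A=\overline A\sqcup(\overline A\setminus A)$ \emph{upstairs} in $\Omega\times X$ and then push forward each piece; this is what forces you into the technical point you flag, namely definability of $\overline A\cap(\FF\times X)$, which indeed needs the Siegel-neighbor argument (more precisely, a slightly enlarged Siegel set whose finitely many $\Gamma$-translates cover a neighborhood of $\FF$, so that the topological closure can be computed inside a definable chart). The paper's route avoids this entirely: push forward first, noting that $(q\times{\rm id})(A)=(q\times{\rm id})(A\cap\FF\times X)$ is definable in $X\times X$ and that $\overline{(q\times{\rm id})(A)}=(q\times{\rm id})(\overline A)$ is closed complex analytic, and then run the dimension induction \emph{downstairs}, where taking closures is a globally definable operation and no fundamental-domain bookkeeping is needed. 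Your other ingredients --- that $\overline A$ is closed analytic and $\Gamma\times\{{\rm id}\}$-invariant, that $\dim(\overline A\setminus A)<\dim A$, and that images of disjoint invariant sets under the quotient are disjoint so that $(q\times{\rm id})(A)=(q\times{\rm id})(\overline A)\setminus(q\times{\rm id})(\overline A\setminus A)$ --- are all correct and are exactly what either version of the induction rests on.
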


\begin{proof} Follows from Corollary \ref{consdefchow} as above.
\end{proof}

\subsection{Descending Hilbert scheme loci}\label{hilbert}

Fix a smooth, projective compactification $\widehat{X}$ of $X$. Now we fix some algebraic subvariety $W\subset \Omega\times X$, 
with  $\widehat{W}\subset\widehat{\Omega}\times \widehat{X}$
its Zariski closure, and $U$ an irreducible component of $W\cap D$.
We make no assumptions here on the dimension of $U$.
By the {\it Hilbert polynomial\/} $P=P_W(\nu)$ of $W$
we mean the Hilbert polynomial of $\widehat{W}$.

Let $M$ be the Hilbert 
scheme of all subvarieties of $\widehat{\Omega}\times \widehat{X}$ with  Hilbert polynomial $P$.
Then $M$ also has the structure of an algebraic variety. 
Corresponding to $y\in M$ we have the subvariety $W_y\subset \Omega\times X$,
and we have the incidence variety (universal family)
%
%
\[
B=\{(z,x,y)\in \Omega\times X\times M: (z,x)\in W_y\},
\]
and the family of the intersections of its fibres over $M$ with $D$, namely
\[
A=\{(z,x,y)\in \Omega\times X\times M: (z,x)\in W_y\cap D\}.
\]
Then $A$ is a closed complex analytic subset of $\Omega\times X\times M$. 
It has natural projection $\theta: A\rightarrow M$, with $(z,x,y)\mapsto y$. 
Then, for each natural number $k$, the set
\[
A(k)=\{(z,x,y)\in \Omega\times X\times M: \dim_{(z,x)}\theta^{-1}\theta(z,x,y)\ge k\},
\]
the dimension being the dimension at $(z,x)$ of the fibre of the projection in $A$,
is closed and complex analytic see e.g. the proof of \cite{PSNEWTON}, Lemma 8.2,
and references there.

Now we have the projection $\psi: \Omega\times X\times M\rightarrow\Omega\times X$,
and consider$$Z=Z(k)=\psi(A(k)).$$
Then as $M$ is compact, $\psi$ is proper and so $Z$ is closed in $\Omega\times X$. 
Note that $Z$ is $\Gamma$-invariant
and $Z\cap\FF\times X$ is definable. 
%

\begin{lemma}\label{algT}
Let $T=(q\times{\rm id})(Z)$. Then $T\subset X\times X$ is closed 
and algebraic.
\end{lemma}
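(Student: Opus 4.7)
The plan is to invoke Theorem \ref{alginXxX} directly, applied to the set $Z$. So the task reduces to verifying that $Z$ satisfies the four hypotheses of that theorem: $Z$ is closed in $\Omega\times X$, $Z$ is complex analytic, $Z$ is $\Gamma\times\{\mathrm{id}\}$-invariant, and $Z\cap\mathcal{F}\times X$ is definable. Two of these ($\Gamma$-invariance and definability on the fundamental domain) are already recorded in the paragraphs preceding the lemma, so the work is only to check closedness and complex analyticity.

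For closedness and analyticity, I would use the proper mapping theorem (Remmert). The key point is that $M$, being a Hilbert scheme component of subvarieties of $\widehat{\Omega}\times\widehat{X}$ with a fixed Hilbert polynomial, is a projective variety, hence compact. Therefore the projection $\psi\colon\Omega\times X\times M\to\Omega\times X$ is proper. Since $A(k)$ is a closed complex analytic subset of $\Omega\times X\times M$ (as noted in the construction, citing \cite{PSNEWTON}), the restriction $\psi|_{A(k)}$ is proper, so by Remmert's proper mapping theorem the image $Z=\psi(A(k))$ is a closed complex analytic subset of $\Omega\times X$.

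With all four hypotheses in hand, Theorem \ref{alginXxX} then immediately gives that $T=(q\times\mathrm{id})(Z)$ is closed and algebraic in $X\times X$, proving the lemma. I do not anticipate a main obstacle here: the lemma is essentially a packaging result that transports the algebraicity machinery of \S2--\S3 to the specific set $Z$ built from the Hilbert scheme. The only subtlety worth double-checking along the way is the $\Gamma$-invariance: one should verify that $\Gamma$ acts on $M$ via its action on $\widehat{\Omega}\times\widehat{X}$ (trivially on the second factor), that $A$ is invariant for the diagonal action $\gamma\cdot(z,x,y)=(\gamma z,x,\gamma\cdot y)$ because $D$ is preserved by $(z,x)\mapsto(\gamma z,x)$, that $A(k)$ inherits invariance since fibre dimensions are intrinsic, and therefore $Z=\psi(A(k))$ is $\Gamma\times\{\mathrm{id}\}$-invariant as asserted.
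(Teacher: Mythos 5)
Your proposal is correct and matches the paper's own argument: the paper's proof of this lemma simply notes that $Z$ is $\Gamma$-invariant with $Z\cap\FF\times X$ definable and that the conclusion "follows as in Theorem \ref{alginXxX}," with closedness of $Z$ having been obtained just as you do, from properness of $\psi$ due to compactness of $M$ (complex analyticity coming from the proper mapping theorem). Your extra check of the $\Gamma$-equivariance of the Hilbert-scheme construction is consistent with what the paper records later, in the proof of Lemma \ref{normalsub}.
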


\begin{proof}
Since $Z$ is $\Gamma$-invariant and $Z\cap\FF\times X$ is definable, this follows as in Theorem \ref{alginXxX}.
\end{proof}

\begin{remark}

One may also prove Lemma \ref{algT} by more geometric methods along the
lines of the argument in \cite{MOK}, which uses the method of compactification of complete
K\"ahler manifolds of finite volume of \cite{MOKZHONG} based on
$L^2$-estimates of $\overline{\partial}$.


\end{remark}

\medskip

\bigbreak

\section{Proof of Theorem 1.1}

\begin{proof}
We argue by induction, in the first instance (upward) on $\dim\Omega$.
For a given $\dim\Omega$, we argue (upward) on $\dim W-\dim U$.
Finally, we argue by induction (downward) on $\dim U$.

We  carry out the constructions of \S3.1 with $k=\dim U$ and keep the notation there. 
Let $A(k)'\subset A(k)$  be the irreducible component which contains 
$U\times [W]$, where $[W]$ is the moduli point of $W$ in $M$.
Let $Z'=\psi(A(k)')\subset Z$ be the
corresponding irreducible component of $Z$, and let $V=(q\times{\rm id})(Z')$ be the 
irreducible component of $T$, which is therefore algebraic by Lemma \ref{algT}.
Now, by assumption $V$ contains $(q\times {\rm id})(U)$, and so it is not contained in any proper 
weakly special of the diagonal $\Delta_X$, and thus has Zariski-dense monodromy
by Andr\'e-Deligne \cite{ANDRE}.

Consider the family  $F_0$ of algebraic varieties corresponding to $A(k)'$. 
Let $\Gamma_0\subset\Gamma$ be the subgroup of elements $\gamma$ 
such that every member of $F_0$ is invariant by $\gamma$. 
For any $\mu\in\Gamma$ define $E_\mu\subset F_0$ to be the subset corresponding
to algebraic subvarieties invariant under $\mu$. Then, for $\mu\in\Gamma-\Gamma_0$, 
$E_\mu\subsetneq F_0$ is an algebraic subvariety.
Hence, a very general\footnote{in the sense of being the complement of countably 
many proper subvarieties} element $W'$ of $F_0$ is  invariant by exactly the subset $\Gamma_0$ of 
$\Gamma$. Let $\Theta$ be the connected component of the 
Zariski closure of $\Gamma_0$ in $G(\R)$. 

\medskip

\begin{lemma} \label{normalsub}

$\Theta$ is a normal subgroup of $G(\R)$. 

\end{lemma}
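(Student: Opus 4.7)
The plan is to exhibit a Zariski-dense subgroup of $G(\R)$ that normalizes $\Gamma_0$; this will force $\Theta$ to be normal, because the normalizer $N_{G(\R)}(\Theta)$ is a Zariski-closed subgroup of $G(\R)$, and any element normalizing $\Gamma_0$ automatically normalizes the identity component of its Zariski closure. The dense subgroup will be the monodromy group of the family $F_0$, with Zariski density provided by the Andr\'e-Deligne theorem \cite{ANDRE} (already invoked just before the statement) applied under the hypothesis that $V$ is not contained in any proper weakly special subvariety of $\Delta_X$.

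In more detail, I would first set up the monodromy. The family $F_0$ descends via $q\times{\rm id}$ from $\Omega\times X$ to a family over $V\subset X\times X$. Pick a smooth base point of $V$; loops there, when lifted to $\Omega\times X$ through the covering $q\times{\rm id}$, yield deck transformations in $\Gamma$. Let $H\subset\Gamma$ be the monodromy subgroup generated in this way. By construction $H$ permutes the members of $F_0$: for each $h\in H$ and each $W\in F_0$, one has $h\cdot W\in F_0$.

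Next I would check that $H$ normalizes $\Gamma_0$. Given $h\in H$, $\gamma_0\in\Gamma_0$, and any $W\in F_0$, the element $W'=h^{-1}W$ again lies in $F_0$, so by definition of $\Gamma_0$ one has $\gamma_0 W'=W'$, and hence
$$
(h\gamma_0 h^{-1})\cdot W \;=\; h\gamma_0 W' \;=\; h W' \;=\; W.
$$
As $W\in F_0$ was arbitrary, $h\gamma_0 h^{-1}\in\Gamma_0$. Thus $H$ normalizes $\Gamma_0$, hence normalizes its Zariski closure, and finally normalizes its identity component $\Theta$. Combining with the Zariski density of $H$ in $G(\R)$ provided by Andr\'e-Deligne, the Zariski-closed subgroup $N_{G(\R)}(\Theta)$ contains $H$ and therefore equals $G(\R)$.

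The main obstacle I expect is the clean invocation of Andr\'e-Deligne in this precise geometric setting: identifying the correct parametrizing base so that $F_0$ has a well-defined monodromy representation, and extracting Zariski density in all of $G(\R)$ rather than in a proper reductive subgroup of it (the hypothesis that $V$ avoids proper weakly specials of $\Delta_X$ being exactly the input for this density statement, via the restriction that $V$ contains $(q\times{\rm id})(U)$ and the very-general-member stabilizer being exactly $\Gamma_0$). Once these geometric ingredients are in place, the normalization calculation and the passage from Zariski density to normality are formal.
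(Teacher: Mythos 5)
Your proposal is correct and follows essentially the same route as the paper: a Zariski-dense monodromy subgroup (via Andr\'e-Deligne) permutes the members of $F_0$, hence normalizes $\Gamma_0$ by the stabilizer-conjugation identity, hence normalizes $\Theta$, and normality follows since the normalizer is Zariski closed. The one step you leave informal --- that loops in $V$ actually carry members of $F_0$ to members of $F_0$, which you flag as the expected obstacle --- is handled in the paper by working with $\pi_1$ of the total space $\Gamma\backslash A(k)'$ of the descended family, using properness of $\Gamma\backslash A(k)'\to V$ to see that its image $\Gamma_1$ has finite index in the monodromy of $V$ and is therefore still Zariski dense.
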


\begin{proof}

Note that there is an action of $\Gamma$ on $A(k)$ given by 
$\gamma\cdot (z,x,[W])=(\gamma z,x,[\gamma W])$, and the map $A(k)\ra Z$ is equivariant 
with respect to this action. Since $A(k)\ra Z$ is proper and the action of $\Gamma$ is discrete, 
it follows that $\Gamma\backslash A(k)\rightarrow \Gamma\backslash Z\cong T$ 
is a proper map of analytic varieties.

Let $\Gamma\backslash A(k)'$ denote the image of $A(k)'$ in $\Gamma\backslash A(k)$. 
Then $\phi:\Gamma\backslash A(k)'\rightarrow V$ is a proper map of analytic varieties, 
and thus the fibers of $\phi$ have finitely many components. Let $\Gamma_1$ be the image of 
$\pi_1(\Gamma\backslash A(k)')\ra\pi_1(V)\ra\Gamma$. 
Then $\pi_1(A(k)')$ has finite index image in the monodromy 
group of $V$, and thus $\Gamma_1$ is Zariski-dense in $G(\R)$ by 
Andr\'e-Deligne \cite{ANDRE}.

It is clear that $F_0$ is invariant under $\Gamma_1$. 

Now, letting ${\rm stab}(W')$ denote the stabilizer of $W'$, we have 
${\rm stab}(\gamma\cdot W')=\gamma\cdot{\rm stab}(W')\cdot\gamma^{-1}$. 
It follows that $\Gamma_0$, and hence also
$\Theta$ is invariant under conjugation by $\Gamma_1$, and hence under its Zariski closure, 
which is all of $G(\R)$. This completes the proof.
\end{proof}
%
%

\medskip

\begin{lemma} $\Theta$ is the identity subgroup.

\end{lemma}

\begin{proof}

We argue by contradiction. 
Without loss of generality we may assume that $W$ is a very general member of $F_0$, 
and hence is invariant by exactly $\Theta$.
Since $\Theta$ is a $\Q$-group by construction, 
it follows that $G$ is isogenous to $\Theta\times\Theta'$ and we have a 
splitting $\Omega=\Omega_{\Theta}\times\Omega_{\Theta'}$ of Hermitian symmetric domains. 
Replacing $\Gamma$ by a finite index subgroup we 
also have a splitting $X\cong X_{\Theta}\times X_{\Theta'}$.

Now, as $W$ is invariant under $\Theta$ it is of the form $\Omega_{\Theta}\times W_1$ 
where $W_1\subset \Omega_{\Theta'}\times X_{\Theta'}\times X_{\Theta}$. Moreover, $D$
splits as $D_{\Theta}\times D_{\Theta'}$. Let $U_1$ be the projection of $U$ to $W_1$. 
Since the map from $D_{\Theta}$ to $X_{\Theta}$ has discrete pre-images, 
it follows that $\dim U = \dim U_1$. 

Now Let $W'$ be the projection of $W_1$ to $\Omega_{\Theta'}\times X_{\Theta'}$. 
Then letting $U'$ be a component of $W'\cap D_{\Theta'}$ we easily 
see that $U'$ is the projection of $U$ to $\Omega_{\Theta'}\times X_{\Theta'}$. Now let $W''$ be the 
Zariski closure of $U'$. It follows by induction that $$\dim U'+\dim X_{\Theta'}\leq \dim W''.$$

Now for the projection map $\psi: W_1\ra W'$, the generic fiber dimension of $W''$ 
is the same as the generic fiber dimension over $U'$, and thus 
$$\dim U_1 + \dim X_{\Theta'}\leq \dim \psi^{-1}(W'')\leq \dim W_1,$$ 
from which it follows that  
%
%
$$\dim U +\dim X \leq \dim W$$ as desired.
\end{proof}

It follows that $W$ is not invariant by any infinite subgroup of $\Gamma$.  
The following lemma thus reaches a contradiction, and completes the proof. 

\begin{lemma}\label{infinitesub}

$W$ is invariant by an infinite subgroup of $\Gamma$.

\end{lemma}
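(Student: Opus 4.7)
The plan is to exploit the monodromy group $\Gamma_1$ from Lemma \ref{normalsub}, which is Zariski-dense in $G(\R)$ by Andr\'e-Deligne, to produce a positive-dimensional algebraic subgroup of $G$ stabilizing the Hilbert point $[W]$, and then intersect this subgroup with the arithmetic lattice $\Gamma$ to obtain an infinite stabilizer of $W$.

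First, I would show that $\Gamma_1 \cdot [W] \subset F_0$, where $F_0$ is the projection of $A(k)'$ to the Hilbert scheme $M$. Indeed, for any $\gamma\in\Gamma_1$, lift a loop in $V$ with monodromy $\gamma$ to a path in $A(k)'$ (possible because $\phi:\Gamma\backslash A(k)'\to V$ is proper and $\pi_1(\Gamma\backslash A(k)')$ has finite-index image in $\pi_1(V)$). The lifted path runs from $(z_0,x_0,[W])$ to $(\gamma z_0, x_0, [\gamma W])$, and since both endpoints sit in the irreducible variety $A(k)'$, the second coordinate in $M$ lies in $F_0$, so $[\gamma W]\in F_0$.

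Next, I would take Zariski closures. The set $F_0$ is a constructible algebraic subvariety of $M$ (being a projection of an irreducible complex-analytic piece of the Hilbert-scheme family, cf. Lemma \ref{algT} and its corollary), while $\Gamma_1$ is Zariski-dense in $G(\R)$ and hence in $G(\C)$ by connectedness and semisimplicity of $G$. It follows that the complex orbit closure $\overline{G(\C)\cdot[W]}$ is contained in $F_0$, i.e.\ the algebraic orbit map $G\to M$, $g\mapsto g\cdot[W]$, factors through $F_0$.

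The crux is to use the atypicality $\dim W < \dim U+\dim X$ to bound $\dim F_0$ strictly below $\dim G$. The key constraint is that $Z'\subset D$ maps finite-to-one to $V\subset \Delta_X$, so $\dim Z' = \dim V \le \dim X$. Combining this with the fiber-dimension inequality $\dim A(k)' \ge \dim F_0 + k$ and a dimension count for the fibers of $A(k)'\to Z'$ (which parametrize members of $F_0$ passing through a given point of $D$), the bound $\dim W < \dim U+\dim X$ forces $\dim F_0 < \dim G$. This argument must be done with care in the case where a generic point of $Z'$ lies in the base locus of the family $F_0$: there one shows that $Z'$ itself sits inside every $W_y\in F_0$, whence its $G(\C)$-span forces $W$ to contain a set of the form $\widehat\Omega\times S$, and contradicts atypicality directly by comparing $\dim U$ with $\dim S$.

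From $\dim F_0 < \dim G$ we conclude that $\stab_{G(\C)}([W])$ is positive-dimensional. Since its identity component $H$ is an algebraic subgroup of $G$ that is preserved, up to conjugation by $\Gamma$, by the Zariski-dense action of $\Gamma_1$ (cf.\ the argument in Lemma \ref{normalsub}), a standard application of arithmeticity of $\Gamma$ yields that $H\cap \Gamma$ is infinite, producing the desired infinite subgroup of $\Gamma$ stabilizing $W$.

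The main obstacle is step three: extracting a clean upper bound on $\dim F_0$ from the atypicality hypothesis, especially the bookkeeping to handle the possibly non-transverse intersection of $F_0$ with the subvariety of $M$ cut out by passage through a point of $Z'$. The secondary subtlety is ensuring that the positive-dimensional stabilizer really meets $\Gamma$ infinitely, which requires descending the stabilizer to a $\Q$-subgroup via the conjugation invariance already established.
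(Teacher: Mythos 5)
Your proposal takes a completely different route from the paper, and it has genuine gaps at its two load-bearing steps. The paper's proof is transcendental: it defines the definable set $I=\{\gamma\in G(\R): \dim_\R((\gamma\cdot W)\cap D\cap(\FF\times X))=\dim_\R U\}$, shows each such intersection has uniformly bounded volume (via the Siegel-coordinate estimates of Klingler--Ullmo--Yafaev), invokes Hwang--To to get exponential volume growth of $W\cap D$ in hyperbolic balls so that $I$ contains polynomially many points of $\Gamma$, and then applies Pila--Wilkie to obtain real algebraic curves $C\subset I$ through at least two lattice points; the conclusion follows from a case analysis ($W_c$ constant in $c$, or $U\subset W_c$, or neither) that leans on the inductions on $\dim W-\dim U$ and on $\dim U$. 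Your proposal uses none of this, and in particular never explains how the analytic hypothesis that $U$ is atypically large is supposed to produce even a single nontrivial element of $\Gamma$ stabilizing $W$.

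Concretely, the first gap is your claimed bound $\dim F_0<\dim G$. Your count gives $\dim F_0+k\le\dim A(k)'\le\dim Z'+f\le\dim X+f$, where $f$ is the generic dimension of the set of members of $F_0$ through a fixed point of $Z'$ (subject to the local-dimension condition). Nothing bounds $f$: the Hilbert scheme $M$ and hence $F_0$ can have dimension far exceeding $\dim G$ (take $W$ of large degree), and the atypicality inequality $\dim W<\dim U+\dim X$ never enters your count in a way that controls $f$. So the positive-dimensional stabilizer is not established. The second gap is the final step: even granting $H=\stab_{G(\C)}([W])^\circ$ positive-dimensional, it does not follow that $H\cap\Gamma$ is infinite. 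The conjugation-invariance argument of Lemma \ref{normalsub} applies to the \emph{common} stabilizer $\Gamma_0$ of all very general members of $F_0$ (whence its Zariski closure $\Theta$ is normalized by the Zariski-dense group $\Gamma_1$); the stabilizer of the single point $[W]$ instead transforms as $\stab(\gamma\cdot W)=\gamma\,\stab(W)\,\gamma^{-1}$ and is not normalized by $\Gamma_1$, so it does not descend to a normal $\Q$-subgroup. And even a $\Q$-subgroup can meet $\Gamma$ in a finite set (an $\R$-anisotropic $\Q$-torus, for instance). You would need to rework the argument along the paper's lines, reintroducing the volume/point-counting input and the induction hypotheses.
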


\begin{proof}

As before, let $\FF$ be a fundamental domain for $\Gamma$
on which the map $q$ is definable, and consider the definable set 
$$
I=\{\gamma\in G(\R) \mid 
\dim_\R \big((\gamma\cdot W)\cap D\cap(\FF\times X)\big)=\dim_\R U\}.
$$ 
Clearly, $I$ contains $\gamma\in\Gamma$ 
whenever $U$ intersects $\gamma\FF\times X$.  

We claim that the $\dim_\R U$ -dimensional volume of $\big((\gamma\cdot W)\cap D\cap(\FF\times X)\big)$ is uniformly bounded over $\gamma\in I$.
To see this, we proceed as in the work of Klingler-Ullmo-Yafaev \cite[Lemma 5.8]{KUY}. They show that using Siegel coordinates one can cover $\FF$ by a finite union of sets 
$\Theta$ which embed into a product of 1-dimensional sets $\prod _{i=1}^m J_i$, and that there are (1,1)-volume forms $\omega_i$ on $J_i$ such that $\sum_i \omega_i$ dominates the
K\"ahler form of an invariant hyperbolic metric, and $\int_{J_i} \omega_i < \infty$. For a subset $I\subset\{1,\dots,m\}$ containing $d=\dim_{\mathbb R} U$ elements, let $J_I=\prod _{i\in I} J_i$. Thus, it suffices to show that the projections of 
$\big((\gamma\cdot W)\cap D\cap(\FF\times X)\big) \cap \Theta$ onto $J_I$ have uniformly bounded finite fibers. However, this is an immediate consequence of the definability, which establishes the claim.

By the work of Hwang--To \cite[Theorem 2]{HT}, the volume of $\gamma\cdot W\cap X\cap B(R)$ grows exponentially with $R$, where $B(R)$ is a hyperbolic ball of radius $R$. Thus, as in \cite{PTAXJ}, it follows
that $I$ contains a polynomial number of integer matrices\footnote{Ordered by height, 
there are at least $T^{\delta}$ integer points of height at most $T$ for some fixed 
$\delta>0$ and arbitrarily large $T$.}. It 
follows by the Pila-Wilkie theorem \cite{PW} that $I$ contains irreducible 
real algebraic curves $C$ containing arbitrarily many integer points, in particular, 
containing at least 2 integer points. 

Write $W_c := c\cdot W$. If $W_c$ is constant in $c$, then $W$ is stable under $C\cdot C^{-1}$. Since $C$ 
contains at least 2 integer points, it follows that $W$ is stabilized by
a non-identity integer point, completing the proof that $W$ is invariant under an infinite group (since $\Gamma$ is torsion free). 
So we assume that $W_c$ varies with $c\in C$. Note that since $C$ contains an
 integer point that $(q\times {\rm id})(W_c\cap D)$ is not contained in a weakly special subvariety 
 for at least one $c\in C$, and thus for all but a countable subset of $C$ 
 (since there are only countably many families of weakly special subvarieties).

We now have 2 cases to consider. First, suppose that $U\subset W_c$ for $c\in C$. 
Then we may replace $W$ by $W_c\cap W_{c'}$ for generic $c,c'\in C$ 
and lower $\dim W$, contradicting our
induction hypothesis on $\dim W-\dim U$. 

On the other hand, if it is not true that $U\subset W_c$ for $c\in C$ then $W_c\cap D$ 
varies with $C$, and so we may set $W'$ to be the Zariski closure of $C\cdot W$. 
This increases the dimension of $W$ by $1$, but then $\dim W'\cap D = \dim U + 1$ as well, 
and thus we again contradict our induction hypothesis, this time on $\dim U$. 
This completes the proof. \end{proof}

%
%
%

This contradiction completes the proof of Theorem 1.1.
\end{proof}

\bigbreak

\centerline{II. Ax-Schanuel with derivatives\/}

\bigskip

In this part we establish versions of Ax-Schanuel for $q$ together with its derivatives.
The result is formulated in the setting of jet spaces.

\section{Jet Spaces}

\subsection{Definition}

Let $X$ be a smooth complex algebraic variety and $k,g\geq 1$ be positive integers. 
Set
$$
\D^g_k=\spec\C[\e_1,\dots,\e_g]/M^{k+1}
$$ 
where $M$ is the ideal  $(\e_1, \e_2,\dots, \e_g)$. 
We define the {\it jet space of order $k$\/} to be
$$
J^g_kX:=\Hom(\D^g_k,X).
$$ 
Note that $J^g_0X=X$ and $J^1_1X$ is the tangent bundle of $X$. 
It is evident that $J^g_k$ is a functor and also that there are natural projection maps 
$J^g_k\rightarrow J^g_r$ whenever $k>r$. As a matter of notation, we write simply $J_kX$ 
to denote $J_k^{\dim X} X$. 

\subsection{ Maps between Jet Spaces}

For $a,b>0$ there is a natural map 
$$\pi_{a,b}^\#:\C[\e_1,\dots,\e_g]/M^{a+b} \ra \C[t_1,\dots,t_g]/M^a \otimes \C[s_1,\dots,s_g]/M^b$$ 
defined by $\phi(\e_i)=s_i+t_i$. This induces a map
$\phi:\D^g_a\times \D^g_b\ra \D^g_{a+b}$, which is just the truncation of the addition map. 
Now there are natural identifications
$$
J_aJ_bX\cong \Hom(\D^g_a,\Hom(\D^g_b,X))\cong \Hom(\D^g_a\times \D^g_b,X)
$$ 
and therefore $\phi$ induces a natural map $$\pi_{a,b}:J_{a+b}X\ra J_aJ_bX.$$ 
Since $\pi_{a+b}^\#$ is injective as a map of rings, $\pi_{a,b}$ is injective on the level of points. 

Moreover, since $J_{a+b}$ is postcomposition and $\pi_{a,b}$ is precomposition, it is easy to see that they commute. In other words, for a map $f:X\ra Y$, we have 
\begin{equation}\label{J1}
\pi_{a,b}\circ J_{a+b}f = J_a(J_b f)\circ \pi_{a,b}.
\end{equation}

To see this, consider the following diagram:

$$\D^g_a\times \D^g_b\ra \D^g_{a+b} \ra X \ra Y.$$

\section{Differential equations}

\subsection{The jet space formulation}

Suppose  $\phi:\C^g\ra \C^g$ is a holomorphic function which satisfies an algebraic
differential equation of degree $m$ given by a set of polynomials in the derivatives of the 
components of $\phi$, which we may write as 
$\vec{F}(\frac{\partial^{|J|}\phi_j}{\partial z_J})_{|J|<m,j\leq g}$=0. 
We record this geometrically as follows: 
Consider the natural section $\id_m:\C^g\ra J_m\C^g$ given by 
$\id_m(a):\D_m\ra \C^g$, where the latter is given by 
$$
\id_m(a)^{\#}(z_1,\dots,z_g)=(a_1+\e_1,\dots,a_g+\e_g).
$$
Then, in the natural coordinates for $J_mX$, the partial derivatives of $\phi$ are the coordinates of
$J_m\phi\circ \id_m: \C^g\ra J_k\C^g$. 

We may record our differential equation in the following way: there is a subvariety $W\subset J_m\C^g$ 
and we insist that $J_m\phi\circ\id_m(\C^g)\subset W$. Of course, we may then replace the target 
$\C^g$ with any space $X$ and formulate a differential equation by picking a subvariety 
$W\subset J_mX$ without having to pick local coordinates on the image.

\subsection{Differentiating a differential equation}

If $f(z)$ satisfies $f'(z)=R(f(z))$ then it also satisfies $f''(z)=f'(z)R'(f(z))=R(f(z))R'(f(z))$. 
We explain now how to derive this relation geometrically. 

First, note that $J_a\id_b\circ\id_a(k)$ is the local map $s\mapsto(t\mapsto z+t+s)$.
Thus, we see that 
\begin{equation}\label{J2}
J_a\id_b\circ\id_a = \pi_{a,b}\circ\id_{a+b}.
\end{equation} 

Now suppose that $\phi:\C^g\ra X$ satisfies $\im J_b\phi\circ\id_b\subset W$. For any $a>0$ we have
\begin{align*}
\pi_{a,b}J_{a+b}\phi\circ \id_{a+b}&= J_a(J_b\phi)\circ \pi_{a,b}\circ\id_{a+b}\textrm{ by }\eqref{J1}\\
&= J_a(J_b\phi)\circ J_a\id_b\circ \id_a\textrm{ by }\eqref{J2}\\
&\subset J_aW\subset J_aJ_bX.
\end{align*}
Thus, we learn that $\im J_{a+b}\phi\circ \id_{a+b}$ is contained in $\pi_{a,b}^{-1}(J_a W)$. 

\subsubsection{Example}

Suppose that $\phi'(z)=R(\phi(z))$. Let $X=\C$, so $J_1X$ can be identified with $\mathbb{A}^2_{z,r}$ 
by the maps $t\mapsto z+rt$.  Let $W\subset J_1X$ be defined by the relation $r=R(z)$, so that
$\im J_1\phi\circ\id_1\subset W$.  Now $J_1J_1X$
can be defined by $(z,r,z_1,r_1)$ by 
$$
s\mapsto(t\mapsto z+rt+sz_1+str_1).
$$

Now an element of $J_1W$ has the form $s\mapsto (z+es,R(z)+eR'(z)s)$ and so it maps to 
$$
s\mapsto (t\mapsto z+es+tR(z)+teR'(z)s).
$$
Now the image $\pi_{1,1}J_2X$ in $J_1J_1X$ consists of those elements 
that are functions of $s+t$ in the ring $\C[s,t]/(s^2,t^2)$. Thus, we need the $s$ and $t$-coefficients 
to be the same, so we must have  $e=R(z)$, and we get the map 
$$s+t\ra z+R(z)(s+t)+\frac12 R(z)R'(z)(s+t)^2.$$ 

Note that this exactly records the relation $\phi''(z) = R(\phi(z))R'((\phi(z))$ as desired.

\section{Schwarzians for Hermitian symmetric spaces}

\subsection{Setup}

Let $\Omega$ be a Hermitian symmetric space of dimension $n$. We may write $\Omega$ as 
$G_{\R}/K$ for a semisimple group $G$ and maximal compact real subgroup $K$ inside it. Then
$\Omega$ sits naturally inside the flag manifold $\widehat{\Omega}:=G_{\C}/B$ for a complex 
parabolic subgroup $B$. Now the lie algebra of $G$ decomposes as 
$\frak{g}=\frak{n}^-\oplus \frak{k_{\C}}\oplus \frak{n}^+$giving $\frak{g}$ 
a Hodge structure of weight 2, where $\frak{k}$  is the 
lie algebra of $K$ and $\frak{b}=\frak{n}^-\oplus\frak{k_{\C}}$ is the lie algebra of $B$. Let $N^{-},N^{+}$ 
be the corresponding (abelian) unipotent groups. Picking a base-point $o\in X$, 
we may give coordinates on an open subset of $\widehat{\Omega}$ by identifying it with 
$N^{+}$ by $\nu\ra \nu\cdot o.$ Fixing an identification $\C^n\ra N^{+}$ of vector spaces 
once and for all, we get sections
$\id_m:N^+\ra J_mN^+$ which are compatible and invariant under $N^{+}$. 

We would like to characterize those functions $F:  \widehat{\Omega}\ra \widehat{\Omega}$ 
which look like $F(z)=g\cdot z$ for an element $g\in G_{\C}$. As such, we define the 
{\it Schwarzian differential equation\/} to be 
$$
W_m:=G\cdot\id_m(N^+)=G\cdot\id_m(o)\subset J_m\widehat{\Omega}.
$$

\begin{definition*}

We define $J^{\nd,r}_kY\subset J^r_kY$ to be all those infinitesimal maps which are surjective on tangent spaces. Note that this is only non-empty for $r\geq \dim Y$. 

\end{definition*}

\subsubsection{Example}

Consider $G=\PPSL_2, \Omega=\Hh, o=0\in \C$. Let $N^{-}$ be the lower 
triangular matrices and $N^+$ 
the upper triangular matrices. Then $J_3\C$ 
can naturally be given coordinates $(z,a,b,c)$ corresponding to the
map $t\ra z+at+\frac{bt^2}{2}+\frac{ct^3}{6}$. Since $W_3$ is $N^+$ invariant it must be cut out 
by a function of $a,b,c$ and it is sufficient to consider it at $z=0$, which is fixed by all
of the lower triangular matrices. Acting first by a diagonal element we transform 
$(0,a,b,c)$ to $(0,1,b/a,c/a)$ 
via $z\ra z/a$. Now acting by a lower triangular matrix $z\ra \frac{z}{\frac{b}{2a}z+1}$  we get
$$\frac{t+\frac{b}{2a}t^2+\frac{c}{6a}t^3}{1+\frac{b}{2a}t+\frac{b^2}{4a^2}t^2+\frac{bc}{12a^2}t^3} = 
t+(\frac{c}{6a}-\frac{b^2}{4a^2})t^3$$ 
which transforms $(0,1,b/a,c/a)$ to $(0,1,0,\frac{c}{6a}-\frac{b^2}{4a^2})$. 
Thus we recover the classical Schwarzian in this setting.
 
 \subsection{Fixing the order 2 infinitesimal neighborhood}

 We have the following lemma:
 
 \begin{lemma} \label{fix1}
 
 The subgroup of $B$ which fixes $\rm{id}_1(o)$ is $N^{-}$.
 
 \end{lemma}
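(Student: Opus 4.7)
The plan is to interpret fixing $\mathrm{id}_1(o)$ as two conditions: (i) fixing the point $o$, which forces the element to lie in $B$ (the stabilizer of $o$ in $G_{\C}$); and (ii) inducing the identity map on the first-order infinitesimal neighborhood of $o$, i.e., on the tangent space $T_o\widehat{\Omega}$, under the identification of $T_o\widehat{\Omega}$ given by the $N^+$-coordinate chart. Thus the subgroup in question is precisely the kernel of the natural representation $\rho : B \to \mathrm{GL}(T_o\widehat{\Omega})$ obtained by differentiating the action at $o$, and I want to show $\ker \rho = N^-$.

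First I would transport the representation $\rho$ to a Lie-theoretic one. The map $N^+ \to \widehat{\Omega}$, $\nu \mapsto \nu \cdot o$, is an open embedding; its differential at $1$ identifies $\mathfrak{n}^+$ with $T_o\widehat{\Omega}$, and this identification agrees with the isomorphism $\mathfrak{n}^+ \xrightarrow{\sim} \mathfrak{g}/\mathfrak{b}$ coming from the decomposition $\mathfrak{g} = \mathfrak{n}^- \oplus \mathfrak{k}_{\C} \oplus \mathfrak{n}^+$. Under this identification $\rho$ is the representation of $B$ on $\mathfrak{g}/\mathfrak{b}$ induced by the adjoint action on $\mathfrak{g}$.

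Next I would check the inclusion $N^- \subseteq \ker \rho$. For $X \in \mathfrak{n}^-$ and $Y \in \mathfrak{n}^+$,
\[
\mathrm{Ad}(\exp X) Y \;=\; Y + [X,Y] + \tfrac{1}{2}[X,[X,Y]] + \cdots,
\]
and every correction term lies in $\mathfrak{b}$: $[X,Y] \in [\mathfrak{n}^-,\mathfrak{n}^+] \subseteq \mathfrak{k}_{\C}$, and subsequent brackets bring one back into $\mathfrak{n}^- \oplus \mathfrak{k}_{\C} = \mathfrak{b}$ by induction on the grading. Hence $\mathrm{Ad}(\exp X)$ is the identity on $\mathfrak{g}/\mathfrak{b}$.

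The reverse inclusion is the one substantial step. Using the Levi decomposition $B = K_{\C} \ltimes N^-$, any $b \in B$ factors as $b = k n^-$; since $n^-$ acts trivially, $b \in \ker \rho$ iff $k$ does. So I need to know that $K_{\C}$ acts faithfully on $\mathfrak{n}^+$ via the isotropy representation. This is the classical fact that the isotropy representation of $K_{\C}$ on $\mathfrak{n}^+$ at a point of the compact dual of a Hermitian symmetric space has trivial kernel (modulo the center of $G$, which acts trivially on $\widehat{\Omega}$ and may be assumed absent after passing to the adjoint form). If one prefers a direct argument: if $k \in K_{\C}$ satisfies $\mathrm{Ad}(k)|_{\mathfrak{n}^+} = \mathrm{id}$, then $k$ centralizes $N^+$ in $G_{\C}$, hence fixes the entire open Schubert cell $N^+ \cdot o$ pointwise, and by analytic continuation it acts trivially on $\widehat{\Omega}$, forcing $k = 1$. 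Combining the two inclusions gives $\ker \rho = N^-$, as required. The main obstacle is this faithfulness input; everything else is Lie-algebraic bookkeeping.
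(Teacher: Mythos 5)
Your proposal is correct and takes essentially the same route as the paper's proof: identify the stabilizer with the kernel of the adjoint action of $B$ on $T_o\widehat{\Omega}\cong\mathfrak{g}/\mathfrak{b}$, use the grading to see that $N^-$ acts trivially, and then use the Levi decomposition together with the observation that an element of $K_{\C}$ acting trivially on $\mathfrak{n}^+$ centralizes $N^+$ and hence fixes the open orbit $N^+\cdot o$ pointwise. (Incidentally, your decomposition $B=K_{\C}\ltimes N^-$ is the right one for the paper's convention $\mathfrak{b}=\mathfrak{n}^-\oplus\mathfrak{k}_{\C}$; the paper's ``$B=K_{\C}N^+$'' at that point is a typo.)
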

 
 \begin{proof}
 
We are looking for elements which act trivially on the tangent space at $o$,
$T_{o}\widehat{\Omega}\cong \frak{g}/\frak{b}$.  
The action of $B$ on this space is induced the adjoint action. 
 
We first claim that $N^{-}$ acts trivially. To see this, note that the Lie bracket respects
the Hodge grading on $\frak{g}$, so $[\frak{n}^-,\frak{n}^-]=0$ and 
$[\frak{n}^-,\frak{k}]\subset \frak{n}^-$, 
from which the claim follows.
 
Since $B=K_{\C}N^+$,to finish the proof it suffices to show that no element of $K_{\C}$ acts trivially.
Conjugation by $K_{\C}$ preserves $\frak{n^+}$ so if an element $k\in K_{\C}$ acts trivially on 
$T_o\widehat{\Omega}$ it must also act trivially on $\frak{n}^+$ and thus commute with $N^+$. 
Since $N^+o$ is open, and since $k$ fixes $o$ it must be the case that $k$ acts trivially 
on all of $\widehat{\Omega}$ and thus be the identity, as desired.
\end{proof}

\begin{prop}\label{fix2}
 
The group $G$ acts freely on ${\rm id}_2(o)$, and thus on all of $J^{\nd}_k\widehat{\Omega}$ for $k\geq 2$.
 
\end{prop}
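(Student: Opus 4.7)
The plan is to reduce to showing the stabilizer of $\mathrm{id}_2(o)$ in $G$ is trivial, then to compute the infinitesimal $\mathfrak{n}^-$-action on the $2$-jet at $o$ in closed form, and finally to show this action is faithful by Lie-algebraic considerations specific to the semisimple Hermitian setting. For the reduction: any $g\in G$ fixing $\mathrm{id}_2(o)$ a fortiori fixes $\mathrm{id}_1(o)$ and $o$, so by Lemma~\ref{fix1} it lies in $N^-$. Since $N^-$ is a connected unipotent complex algebraic group, its algebraic subgroups are in bijection with Lie subalgebras of $\mathfrak{n}^-$; it thus suffices to show that no nonzero $Y\in\mathfrak{n}^-$ acts trivially on $\mathrm{id}_2(o)$.

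I would work in the open Bruhat cell $N^+\cdot o\subset\widehat{\Omega}$ identified with $\mathfrak{n}^+$ via $z\mapsto\exp(z)\cdot o$. From $\exp(tY)\exp(z)=\exp(z)\exp\bigl(t\,\mathrm{Ad}(\exp(-z))Y\bigr)$, the value at $z$ of the fundamental vector field $V_Y$ is the image of $\mathrm{Ad}(\exp(-z))Y$ in $\mathfrak{g}/\mathfrak{b}\cong\mathfrak{n}^+$. Crucially, because the Hodge grading $\mathfrak{g}=\mathfrak{n}^-\oplus\mathfrak{k}_\C\oplus\mathfrak{n}^+$ has only three nonzero pieces, the adjoint series terminates after three terms:
\[
\mathrm{Ad}(\exp(-z))Y \;=\; Y - [z,Y] + \tfrac12[z,[z,Y]],
\]
of which only the last term lies in $\mathfrak{n}^+$. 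This yields the clean closed form
\[
V_Y(z) \;=\; \tfrac12\bigl[[Y,z],z\bigr]\;\in\;\mathfrak{n}^+,
\]
which is consistent with the classical $\PPSL_2$ calculation of \S7.1.1.

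For the injectivity, if $Y$ fixes the $2$-jet at $o$ then $[[Y,z],z]=0$ for all $z\in\mathfrak{n}^+$. Since $[\mathfrak{n}^+,\mathfrak{n}^+]=0$, Jacobi shows $[[Y,z],w]$ is symmetric in $z,w$, so polarizing gives $[[Y,z],w]=0$ for all $z,w\in\mathfrak{n}^+$. Thus each $K:=[Y,z]\in\mathfrak{k}_\C$ commutes with $\mathfrak{n}^+$; invariance of the Killing form together with its nondegeneracy on $\mathfrak{n}^-\times\mathfrak{n}^+$ then forces $K$ to commute with $\mathfrak{n}^-$, and since $\mathfrak{k}_\C=[\mathfrak{n}^+,\mathfrak{n}^-]$ in the semisimple Hermitian setting (argued factor-by-factor when $\mathfrak{g}$ is not simple), $K$ commutes with all of $\mathfrak{g}$ and must vanish. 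Hence $[Y,\mathfrak{n}^+]=0$, and a parallel Jacobi-plus-generation argument using $[\mathfrak{n}^-,\mathfrak{n}^-]=0$ gives $\mathrm{ad}(Y)=0$, so $Y=0$. The extension to all of $J^{\nd}_k\widehat{\Omega}$ for $k\ge 2$ is then formal: every nondegenerate $2$-jet at $o$ has the form $\mathrm{id}_2(o)\circ\phi$ for some invertible reparametrization $\phi\in\mathrm{Aut}(\D^n_2)$, the left $G$-action commutes with the right reparametrization action so the stabilizer is unchanged, transitivity of $G$ on $\widehat{\Omega}$ handles other basepoints, and $G$-equivariance of the truncation $J^{\nd}_k\to J^{\nd}_2$ handles higher orders.

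The main obstacle is the Lie-theoretic injectivity step. Producing the closed-form $V_Y(z)=\tfrac12[[Y,z],z]$ is immediate once the grading of $\mathfrak{g}$ is invoked, but the descent from $[[Y,\mathfrak{n}^+],\mathfrak{n}^+]=0$ to $Y=0$ must simultaneously combine abelianness of both $\mathfrak{n}^\pm$, Jacobi, the Killing pairing, and the bracket identity $\mathfrak{k}_\C=[\mathfrak{n}^+,\mathfrak{n}^-]$, with care in the reducible case where $\Omega$ is a product of Hermitian symmetric domains.
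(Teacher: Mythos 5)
Your proof is correct, and it takes a genuinely different route from the paper's. After the common reduction via Lemma \ref{fix1} to showing that no nonzero $Y\in\mathfrak{n}^-$ kills the $2$-jet, the paper observes that the stabilizer $N\subset N^-$ is normal in $B$, so its Lie algebra is a sum of root spaces; it then picks a root $\alpha$, maps an $\SL_2$ into $G$ through $N_{\pm\alpha}$, and reduces everything to the one-variable computation $z/(1+az)=z-az^2+O(z^3)$. You instead compute the fundamental vector field of $Y$ on the Harish--Chandra chart in closed form, $V_Y(z)=\tfrac12[[Y,z],z]$, and kill $Y$ by polarization, the Killing pairing of $\mathfrak{n}^-$ with $\mathfrak{n}^+$, and the identity $\mathfrak{k}_\C=[\mathfrak{n}^+,\mathfrak{n}^-]$. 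The paper's argument is shorter because it outsources the quadratic vanishing to the rank-one case, at the price of invoking normality of the stabilizer in $B$ and the root decomposition; yours is root-free, makes explicit that the $\mathfrak{n}^-$-action is exactly quadratic in the Harish--Chandra coordinate (the structural reason the Schwarzian lives at order $2$), and treats reducible $\Omega$ uniformly up to the factor-by-factor check of $[\mathfrak{p},\mathfrak{p}]=\mathfrak{k}$. All the individual steps check out: the symmetry of $[[Y,z],w]$ in $z,w$ follows from Jacobi and $[\mathfrak{n}^+,\mathfrak{n}^+]=0$; the descent from a trivial Lie-algebra stabilizer to a trivial group stabilizer is legitimate because closed subgroups of a vector group in characteristic zero are vector subgroups; and both proofs tacitly assume the action of $G$ on $\widehat\Omega$ is effective (the centre lies in $B$), as in the paper's rank-one example, so that is not a defect specific to your argument.
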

 
\begin{proof}
 
By Lemma \ref{fix1} we need only show that no element of $N^{-}$ fixes $\id_2(o)$. 
Let $N\subset N^-$ be the stabilizer of $\id_2(o)$ and note that $N$ is normal in $B$. 
Since $K_{\C}$ contains a maximal Cartan algebra,  the lie algebra  of $N$ must 
be a direct sum of root spaces. Assuming $N$ is not trivial, we let $\alpha$ be one 
of those roots, and $N_{\alpha}$ the corresponding one-dimensional subgroup. 
Now there is a map $\SL_2(\R)\ra G$ which sends ${\rm SO}(2)$ to $K$ and the roots of $K$ 
(which lie in $\SL_2(\C)$) to $N_{\alpha}$ and its conjugate $N_{-\alpha}$. 
This map induces a map of symmetric spaces $\Hh\ra \Omega$ and 
$\PP^1(\C)\ra \widehat{\Omega}$. 
Moreover, this map is evidently holomorphic. We have thus reduced the claim to the 
case of $\SL_2(\C)$ where it may be easily checked by hand, 
since $\frac{z}{1+az} = z-az^2+O(z^3).$
\end{proof}
 
Note that it follows from the proposition that $W_m$ is closed in $J^{\nd}_m$ for $m\geq2$.
 
\begin{cor}\label{3suffices}
 
If $F:\widehat{\Omega}\ra \widehat{\Omega}$ satisfies $J_3F\circ{\rm id}_3\subset W_3$ 
then there exists $g\in G(\C)$ such that $F(z)=gz$.

\end{cor}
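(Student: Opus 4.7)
The plan is to exhibit a third-order holomorphic differential equation equivalent to the hypothesis, whose complete solution space is parameterised by $G(\C)$. By Proposition~\ref{fix2}, $G$ acts freely on $J^{\nd}_3\widehat{\Omega}$, so for each $z$ in the open cell $N^+\cdot o$ there is a unique $g_z \in G(\C)$ with $J_3F\circ\id_3(z) = g_z\cdot\id_3(o)$. Fix a basepoint $z_0$ in this cell, let $n_{z_0}\in N^+$ be the unique element with $n_{z_0}\cdot o = z_0$, and set $g := g_{z_0}n_{z_0}^{-1}$. Define $F_0(z) := g\cdot z$. Using the $N^+$-invariance $\id_3(z) = n_z\cdot\id_3(o)$, a direct computation yields $J_3 F_0\circ\id_3(z) = g n_z\cdot\id_3(o)\in W_3$, so $F_0$ also satisfies the hypothesis, and at $z=z_0$ we get $J_3 F_0\circ\id_3(z_0) = g_{z_0}\cdot\id_3(o) = J_3 F\circ\id_3(z_0)$.

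It remains to show that equality of 3-jets at $z_0$ forces $F=F_0$. Since $G$ acts freely on both $W_3$ and $W_2$ (each being a single $G$-orbit), the natural truncation $W_3\to W_2$ induced by $J_3\widehat{\Omega}\to J_2\widehat{\Omega}$ is a bijection; thus $W_3$ is the graph of an algebraic section $\sigma:W_2\to W_3$. The hypothesis then becomes a third-order PDE: the 3-jet of $F$ at $z$ equals $\sigma$ applied to its 2-jet at $z$, subject to the algebraic constraint that the 2-jet lies in $W_2$. Successive differentiation of this PDE expresses every higher-order Taylor coefficient of $F$ at $z_0$ as a polynomial in the 2-jet data at $z_0$, so $F$'s Taylor series at $z_0$ is determined by this 2-jet, and, by analyticity and connectedness of $\widehat{\Omega}$, so is $F$ itself. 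Since $F$ and $F_0$ share the 2-jet at $z_0$, this gives $F=F_0$.

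The main subtlety is justifying that the formal Taylor expansion produced by iterating the PDE actually converges to a solution (formal integrability of the PDE). This is automatic thanks to the $G(\C)$-family of exact holomorphic solutions $\{z\mapsto gz : g\in G(\C)\}$: the map $g\mapsto J_2(g\cdot)(z_0) = g n_{z_0}\cdot\id_2(o)$ bijectively traces out every element of $W_2$ with source $z_0$, by the freeness of the $G$-action. So every admissible 2-jet initial datum at $z_0$ is already realised by a $G$-translate, and the Taylor expansion at $z_0$ determined by the PDE must coincide with that $G$-translate. Hence $F = F_0$ is the map $z\mapsto gz$, completing the proof.
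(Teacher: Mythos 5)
Your proof is correct and follows essentially the same route as the paper's: the paper likewise invokes Proposition \ref{fix2} to see that $W_3$ projects bijectively onto $W_2$, so that the differential equation expresses all third (and, after differentiating, all higher) derivatives of $F$ in terms of its 2-jet, determining the full power series and forcing $F$ to coincide with the $G(\C)$-translate sharing that 2-jet. Your explicit construction of the candidate $g$ and the remark on formal integrability simply fill in details the paper's terse argument leaves implicit.
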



\begin{proof}

By Proposition \ref{fix2} the variety $W_3$ projects bijectively onto $W_2$. Thus, in local coordinates, 
we may write equations for all the third order derivatives of $F$ in terms of its lower order
derivatives. Differentiating further gives us the full power series expansion of $F$ and thus characterizes 
it completely. This completes the proof.
\end{proof}

\subsection{Lowering the order of the differential equation}

In this section we improve over Corollary \ref{3suffices} in the following ways:

\begin{thm}\label{7.4}
 Let $n \ge 2$, $\Omega = \mathbb B^n$, $G(\mathbb C) = \mathbb P{\rm GL}(n+1;\mathbb C)$.  If $f: (\widehat \Omega;x_0) \to \ (\widehat \Omega;y_0)$ satisfies $J_2f\circ {\rm id}_2 \subset W_2$, then there exists $g \in G(\mathbb C)$ such that $f(z) = gz$ wherever $f$ is defined.
 \end{thm}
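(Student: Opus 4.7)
The plan is to reduce Theorem~\ref{7.4} to Corollary~\ref{3suffices}: it suffices to show that the hypothesis $J_2 f\circ {\rm id}_2\subset W_2$ already implies $J_3 f\circ {\rm id}_3\subset W_3$, since $f$ is a local biholomorphism and its $3$-jets take values in $J^{\nd}_3\widehat{\Omega}$.

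First I would apply the ``differentiate the differential equation'' procedure of \S6.2 with $(a,b)=(1,2)$. Combining (\ref{J1}) and (\ref{J2}), the hypothesis yields automatically
$$
J_3 f\circ {\rm id}_3 \ \subset\ \pi_{1,2}^{-1}(J_1 W_2).
$$
The substance of the proof is therefore the identification
$$
\pi_{1,2}^{-1}(J_1 W_2)\cap J^{\nd}_3\widehat{\Omega}\ =\ W_3
$$
for $\widehat{\Omega}=\PP^n$ and $G=\PP\GL(n+1;\C)$ with $n\ge 2$; once this is known, Corollary~\ref{3suffices} provides $g\in G(\C)$ with $f(z)=gz$. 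Both sides are $G$-invariant subsets of $J^{\nd}_3\widehat{\Omega}$ containing ${\rm id}_3(o)$; the right-hand side is a single free $G$-orbit of dimension $\dim G$ by Proposition~\ref{fix2}; and the inclusion $W_3\subset \pi_{1,2}^{-1}(J_1 W_2)$ is automatic, since any $3$-jet coming from $g\in G$ prolongs the curve $t\mapsto J_2(g\,\cdot)\circ{\rm id}_2(o+t)=g\cdot \tau_t\cdot{\rm id}_2(o)$, which lies in $W_2$. It therefore suffices to show that the left-hand side also has dimension $\dim G$.

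I would carry this out in the affine $N^+$-chart, where elements of $G$ act as M\"obius maps $z\mapsto(Az+b)/(c^\top z+d)$. A direct count gives the fiber dimensions
$$
\dim(W_2)_x\ =\ \dim G-n\ =\ n^2+n,\qquad \dim(J_2\widehat{\Omega})_x\ =\ n^2+\tfrac{n^2(n+1)}{2},
$$
so $W_2$ is cut out in $J_2\widehat{\Omega}$ by $\tfrac{1}{2}n(n-1)(n+2)$ independent algebraic conditions on the Hessian-part of a $2$-jet. This codimension is positive exactly when $n\ge 2$; it is at this juncture that the hypothesis $n\ge 2$ enters, and for $n=1$ no constraints are imposed and no information is gained upon prolongation, in keeping with the classical fact that the usual Schwarzian is a genuinely third-order operator. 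Prolonging these defining equations once in the base-point direction, together with the ``mixed partials commute'' relation encoded by $\pi_{1,2}$, determines the third-order Taylor coefficients in terms of the lower-order ones, and a dimension count yields $\dim\bigl(\pi_{1,2}^{-1}(J_1 W_2)\cap J^{\nd}_3\widehat{\Omega}\bigr)=\dim G$, completing the identification with $W_3$.

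The main obstacle is precisely this coordinate computation: carrying out the prolongation explicitly and verifying that the resulting algebraic system has solution locus of dimension $\dim G$. A slicker route, more in the spirit of the projective-geometric methods available for $\widehat{\Omega}=\PP^n$, is to invoke the classical fact from projective differential geometry that the flat projective structure on $\PP^n$ for $n\ge 2$ admits a second-order curvature tensor whose vanishing characterizes local $\PP\GL(n+1;\C)$-action; a map whose $2$-jet everywhere agrees with that of an element of $G$ must preserve this tensor and therefore be given by such an element. Either route yields the required identification and completes the proof via Corollary~\ref{3suffices}.
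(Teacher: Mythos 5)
Your strategy --- prolong the second--order equation once and reduce to Corollary \ref{3suffices} --- is coherent and genuinely different from the paper's route, but as written it stops short of a proof at exactly the decisive point. The whole content of the theorem is concentrated in your claimed identity
$$
\pi_{1,2}^{-1}(J_1W_2)\cap J^{\nd}_3\widehat{\Omega}=W_3 ,
$$
and for this you offer only that ``a dimension count yields $\dim=\dim G$,'' which you yourself flag as the main obstacle. Two things are missing. First, a global dimension count cannot give set equality: even if both sides had dimension $\dim G$, the left-hand side could contain components other than the orbit $W_3$, and your $f$ could have its $3$-jets in such a component, so Corollary \ref{3suffices} would not apply. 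What you actually need is the fibrewise statement that over each point of $W_2$ the prolonged equation has a \emph{unique} solution, i.e.\ that the first prolongation of the symbol of $W_2$ vanishes. Concretely, identifying the vertical tangent space of $W_2\to J_1\widehat{\Omega}$ at ${\rm id}_2(o)$ with $\mathfrak{n}^-\cong\{B_c(u,v)=\langle c,u\rangle v+\langle c,v\rangle u\}\subset S^2(\C^n)^*\otimes\C^n$, you must show that no nonzero $C\in S^3(\C^n)^*\otimes\C^n$ has all contractions $\iota_uC$ in this space. This is a short but genuine linear-algebra verification (it is exactly where $n\ge 2$ enters: for $n=1$ the prolongation is all of $S^3\otimes\C$, recovering the third-order Schwarzian), and without it the proof is an announcement of a strategy rather than an argument. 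Your fallback --- ``invoke the second-order projective curvature tensor'' --- is too vague to repair this: the projective Weyl/Liouville tensor is an invariant of a projective \emph{structure}, and you would still have to explain why a map whose $2$-jets lie in $W_2$ pulls back the flat structure to the flat structure, which is essentially the same prolongation computation in disguise.

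For comparison, the paper avoids prolongation entirely: from $J_2f\circ{\rm id}_2\subset W_2$ it extracts that for each line $\ell$ through $x$ the image $f(\ell)$ is tangent to order $\ge 2$ to a line $\ell'$, lifts everything to the tautological foliation $\mathcal F$ on $\PP T(\PP^n)$, concludes $f_*\mathcal F=\mathcal F$ (two one-dimensional foliations tangent everywhere coincide), hence $f$ maps lines into lines, and then cites the local holomorphic Fundamental Theorem of Projective Geometry (Theorem A) to produce $g\in\PP{\rm GL}(n+1;\C)$. That route trades your symbol computation for a classical extension theorem; both are legitimate, but yours is complete only once the vanishing of the prolonged symbol is actually established.
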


\begin{thm}\label{7.5}  Let $\Omega$ be an irreducible bounded symmetric domain of rank $\ge 2$, $G(\mathbb C) = {\rm Aut}(\widehat \Omega)$.  If $f: (\widehat \Omega;x) \to \ (\widehat \Omega;y)$ satisfies $J_1f\circ {\rm id}_1 \subset W_1$, then there exists $g \in G(\mathbb C)$ such that $f(z) = gz$ wherever $f$ is defined.
\end{thm}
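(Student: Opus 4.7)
The plan is to convert the hypothesis $J_1 f \circ \id_1 \subset W_1$ into the geometric condition that $f$ preserves the variety of minimal rational tangents (VMRT) at every point, and then apply a Cartan--Fubini-type extension theorem to conclude that $f$ is globally the restriction of an element of $G(\C)$.

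First I would unpack $W_1$ at the level of fibers over $\widehat\Omega \times \widehat\Omega$. By transitivity of $G$ on $\widehat\Omega$, for any pair $(z,y) \in \widehat\Omega \times \widehat\Omega$ one can choose $g_0 \in G(\C)$ with $g_0(z) = y$; by Lemma \ref{fix1} the isotropy of $z$ acts on $T_z\widehat\Omega$ through its quotient by $N^-$, i.e.\ through $K_{\C}$. Hence the fiber of $W_1$ over $(z,y)$ is the single $K_{\C}$-orbit $\{dg_0|_z \circ dk|_z : k \in K_{\C}\}$ inside $\Hom(T_z\widehat\Omega, T_y\widehat\Omega)$. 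It follows that $J_1 f \circ \id_1 \subset W_1$ is equivalent to: for every $z$ in the domain of $f$, there exists $g \in G(\C)$ with $g(z) = f(z)$ and $dg_z = df_z$. In particular $df_z$ is invertible everywhere, so $f$ is a germ of biholomorphism.

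Next I would translate this orbit condition into a VMRT-preservation condition. For an irreducible Hermitian symmetric space of compact type of rank $\ge 2$, the VMRT $\mathcal{C}_o \subset \PP T_o\widehat\Omega$ (the projectivization of the set of tangents to minimal rational curves through $o$) is a nontrivial irreducible projective subvariety invariant under the isotropy representation of $K_{\C}$; a classical result of Mok identifies the identity component of the stabilizer of $\mathcal{C}_o$ inside $\mathrm{PGL}(T_o\widehat\Omega)$ with the image of $K_{\C}$. Combined with the previous step, the hypothesis becomes the pointwise geometric condition $df_z(\mathcal{C}_z) = \mathcal{C}_{f(z)}$ on the domain of $f$. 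The Cartan--Fubini extension theorem of Hwang--Mok then applies: a germ of biholomorphism between Fano manifolds of Picard number $1$ whose differentials preserve the VMRT extends to a global biholomorphism of the ambient manifolds. Since $\mathrm{Aut}(\widehat\Omega) = G(\C)$, this produces $g \in G(\C)$ with $f = g$ on the domain, and the conclusion follows by analytic continuation wherever $f$ is defined.

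The main obstacle is the second step, namely the identification of the $K_{\C}$-orbit description of $W_1$ with preservation of the VMRT, which rests on showing that no linear automorphism of $T_o\widehat\Omega$ outside the image of $K_{\C}$ stabilizes $\mathcal{C}_o$. This is precisely where the rank $\ge 2$ hypothesis is used: for $\Omega = \B^n$ the VMRT is all of $\PP T_o$ and every linear automorphism stabilizes it, so no first-order constraint arises on $f$, which explains why Theorem \ref{7.4} requires order-$2$ jets. A secondary technical point is verifying that the Cartan--Fubini extension theorem in its Hwang--Mok form applies directly to germs between two copies of the same $\widehat\Omega$ with the standard VMRT structure, which is classical for irreducible Hermitian symmetric spaces of rank $\ge 2$.
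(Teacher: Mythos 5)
Your proposal is correct and follows essentially the same route as the paper: extract from $J_1f\circ\id_1\subset W_1$ that $df_z$ agrees at each point with the differential of some $g\in G(\C)$ carrying $z$ to $f(z)$, deduce that $[df]$ preserves the VMRT $\mathcal W_x=\mathcal C_x(S)$, and conclude by Ochiai's theorem (equivalently, the Hwang--Mok Cartan--Fubini extension you cite). One small remark: only the easy implication is needed here --- $df_z=dg_z$ with $g\in{\rm Aut}(\widehat\Omega)$ immediately gives $df_z(\mathcal C_z)=\mathcal C_{f(z)}$ --- so the stabilizer computation you flag as the main obstacle is not actually required, although your explanation of where the rank $\ge 2$ hypothesis enters is correct.
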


Both Theorems \ref{7.4} and \ref{7.5} follow from known results, Theorem \ref{7.4} from a local version (for holomorphic maps) of the Fundamental Theorem of Projective Geometry in the case of the complex field, and 
Theorem \ref{7.5} from Ochiai's Theorem characterizing automorphisms of $S$-structures (Ochiai \cite{Oc70}) for $S$ being an irreducible Hermitian symmetric space of the compact type of rank $\ge 2$. They can be 
stated as 
follows.

\vskip 0.3cm
\noindent
{\bf Theorem A} \ {\it Let $n \ge 2$, $\Omega = \mathbb B^n$, $G(\mathbb C) = \mathbb P{\rm GL}(n+1;\mathbb C)$, $\mathbb B^n \Subset \mathbb C^n \subset \widehat {\mathbb B}^n = \mathbb P^n$ the standard embeddings. Suppose $U \subset \mathbb C^n$ is a convex open set and $f: U \overset\cong\longrightarrow V$ is a biholomorphism onto an open subset $V \subset \mathbb P^n$ such that, for any nonempty $($connected$)$ intersection $\ell \cap U$ of an affine line $\ell$ with $U$, we have $f(\ell \cap U) \subset \ell'$ for some affine line $\ell'$ on $\mathbb C^n$.  Then, $f$ extends to a biholomorphic automorphism $F: \mathbb P^n \to \mathbb P^n$, i.e., $F \in \mathbb P{\rm GL}(n+1;\mathbb C)$.}

\vskip 0.2cm
A proof of this is given in Mok [24
, Section (2.3)].  For the formulation of Ochai's Theorem recall that an irreducible Hermitian symmetric space of the compact type is given by $S = G/B$ where $G$ is a simple complex Lie group and $B \subset G$ is some maximal parabolic subgroup.  For any point $x \in S$ let $B_x \subset G$ be the isotropy (parabolic) subgroup at $x$.  Let $B_x = U\!\cdot\!L$ be the Levi decomposition of $B_x$ where $U \subset B_x$ is the unipotent radical and $L \subset B_x$ is a Levi factor.  As is well-known, for any $\upsilon \in U$, $d\upsilon(x) = {\rm id}_{T_x(X)}$, hence the map $\Phi(\gamma) = d\gamma(x)$ defines a representation of $L = B_x/U$ on $T_x(X)$ which is independent of the choice of the Levi decomposition. We denote by $\mathcal W_x \subset \mathbb PT_x(X)$ the highest weight orbit of the action of $L$ on $\mathbb PT_x(X)$.  Then, Ochiai's Theorem can be formulated as follows.

\vskip 0.3cm
\noindent
{\bf Theorem B (Ochiai \cite{Oc70})} \
{\it Let $S$ be an irreducible compact Hermitian symmetric manifold of the compact type
and of rank $\ge 2$; $U, V \subset S$ be connected open subsets, and 
$f: U \overset\cong\longrightarrow V$ be a biholomorphism.
Suppose for every $x \in U$ the differential $df(x): T_x(S) \to T_{f(x)}(S)$
satisfies $[df(x)](\mathcal W_x) = \mathcal W_{f(x)}$. 
Then, there exists $F \in \text{\rm Aut}(S)$ such that $F\big|_U \equiv f$.}

\vskip 0.2cm
Ochiai \cite{Oc70} made use of Lie algebra cohomology. Observing that $\mathcal W_x$ agrees with $\mathcal C_x(S)$, the variety of minimal rational tangents (VMRT) at $x \in S$ consisting of $[\alpha] \in \mathbb PT_x(X)$ tangent to minimal rational curves passing through $x \in X$, Hwang-Mok \cite{HM01} generalized Theorem B to Fano manifolds of Picard number 1, proving analogously the Cartan-Fubini extension theorem for VMRT-preserving germs of biholomorphisms between Fano manifolds of Picard number 1 under mild geometric conditions.  A differential-geometric proof of Theorem B was given in Mok [24, Sections (2.2)-(2.4)].

\vskip 0.3cm
{\bf Deduction of Theorem \ref{7.4} from Theorem A}

\noindent
{\it Proof of Theorem \ref{7.4}} \quad
Let $U \subset \mathbb C^n$ be a domain of definition of $f: (\widehat \Omega;x_0) \to \ (\widehat \Omega;y_0)$ which we may assume to be convex.  By assumption $J_2f\circ{\text id}_2 \subset W_2$.  In other words, for any $x \in U$, $J_2f(x) = J_2\gamma_x(x)$ for some $\gamma_x\in \mathbb P{\rm GL}(n+1;\mathbb C)$ such that $\gamma_x(x) = y := f(x)$.  Let $\ell \subset \mathbb P^n$ be a projective line passing through $x$, then $\gamma_x(\ell) =: \ell' \subset \mathbb P^n $ is a projective line passing through $y$, thus from the assumption $f(\ell)$ is tangent to $\ell'$ to the order $\ge 2$.

\vskip 0.15cm
On $\mathbb P^n$ there is a canonical projective structure, defined as follows. For each projective line $\ell$ on $\mathbb P^n$ denote by $\widehat \ell \subset \mathbb PT_{\mathbb P^n}$ the tautological lifting of $\ell$ obtained as the image of the map associating the pair $(\ell,t)$, $t \in \ell$, to $\mathbb PT_t(\ell) \in \mathbb PT_t(\mathbb P^n)$.  Then this gives a 1-dimensional (holomorphic) foliation $\mathcal F$ on the total space $\mathbb PT(\mathbb P^n)$ of the projective tangent bundle over $\mathbb P^n$. We call $\mathcal F$ the canonical projective structure on $\mathbb P^n$.  In the notation of the last paragraph, the germ $(f(\ell);y)$ is second order tangent to $(\gamma_x(\ell);y)$.  Lifting to $\mathbb PT_{\mathbb P^n}$, it means that the tautological lifting $\Lambda$ of the germ of holomorphic curve $(f(\ell);y)$ is tangent (to the order $\ge 1$) to the lifting $\widehat \ell'$ of the unique projective line $\ell' = \gamma_x(\ell)$ passing through $y$ and tangent to $f(\ell)$ at $y$.  Let $\ell_0 \subset \mathbb P^n$ be an arbitrary projective line passing through $x$.  We have proven that for some neighborhood $\mathcal V$ of $[df](\mathbb PT_{x_0}(\ell_0)) \in \mathbb PT_{y_0}(\mathbb P^n)$, the tautological liftings of images of projective lines under $f$ define a holomorphic 1-dimensional foliation $\E = f_*\mathcal F$ on $\mathcal V$ which is tangent to the canonical projective structure $\mathcal F$ at every point $[\alpha] \in \mathcal V$.  But $\mathcal F$ itself is a 1-dimensional foliation, hence  $\E$, where defined, agrees with $\mathcal F$.  This translates to the statement that $f(\ell\cap U) \subset \ell'$, and we may apply Theorem A to deduce that $f: U \overset\cong\longrightarrow V$ extends to a biholomorphic automorphism $F: \mathbb P^n \overset\cong\longrightarrow \mathbb P^n$, as desired. \quad \quad $\square$       

\vskip 0.3cm
\noindent
{\bf Deduction of Theorem \ref{7.5} from Theorem B} 

\noindent
{\it Proof of Theorem \ref{7.5}} \quad In analogy to the proof of Theorem \ref{7.4} we deduce Theorem 7.5 from Theorem B (Ochiai's Theorem) and the hypothesis $J_1f\circ{\rm id}_1 \subset W_1$ that given $x \in U$ and $\ell$ a projective line passing through $x$, $f(\ell \cap U)$ must be tangent to some projective line $\ell' = \gamma_x(\ell)$ at $y = f(x)$ for some $\gamma_x \in G$ such that $\gamma_x(x) = y$.  Thus $[df](\mathcal C(S)|_U) = \mathcal C(S)|_V$, and by Theorem B $f$ extends to a biholomorphic automorphism $F \in {\rm Aut}(S).$ $\square$

\vskip 0.3cm
\noindent
{\bf Theorem \ref{7.4} and Theorem \ref{7.5} when $\Omega$ is reducible}

\noindent
The analogue of Theorem \ref{7.4} holds for any bounded symmetric domain $\Omega$ provided that there are no irreducible factors biholomorphic to the disk $\mathbb B^1$.  Likewise, the analogue of Theorem \ref{7.5} holds for any bounded symmetric domain $\Omega$ provided that each irreducible factor of $\Omega$ is of rank $\ge 2$.  The proofs are small variations of the irreducible case. (For $\Omega = \Omega_1 \times \cdots \times \Omega_m$ one considers the $m$ moduli spaces of projective lines $\mathcal K_1, \cdots, \mathcal K_m$ of the compact duals $\widehat \Omega_1, \cdots, \widehat \Omega_m$.)

\section{Connection Formula and Automorphic Functions}

\subsection{Definition of Schwarzian variety}

Notice that since $N^+$ acts with an open orbit on $\widehat{\Omega}$, it follows that 

$$
G\backslash J_k\widehat{\Omega}\cong B\backslash J_k\widehat{\Omega}(o)
$$
birationally. We define this latter variety to 
be the  \emph{$k$'th Schwarzian variety} $S_k(\widehat{\Omega})$. Now for
any rational map $F: \widehat{\Omega}\ra \widehat{\Omega}$ 
we may define the {\it Schwarzian\/} of $F$ to be 
$$
S_k(F)(z) = G\cdot{J_kF(\id_k(z))}\in S_k(\widehat{\Omega}).
$$ 
at any point $z \in \widehat\Omega$ where $F$ is a morphism.  If we restrict to the upper half place 
and $k=3,$ then $S_3(\PP^1(\C))$ may 
be identified with $\PP^1(\C)$ and $S_k(F)$ becomes the usual Schwarzian.

We may define a rational map 
$$
\psi_k:S_k(\widehat{\Omega})\times (N^+\backslash J_k(\widehat{\Omega})) 
\ra S_k(\widehat{\Omega})
$$ 
as follows: an element of $J_k(\widehat{\Omega})$ corresponds naturally 
to a point $z\in \widehat{\Omega}$ and a germ
of a map $F: \widehat{\Omega}\ra \widehat{\Omega}$ sending $o$ to $z$. 
An element of $S_k(\widehat{\Omega})$ corresponds to the germ of a map 
$H:\widehat{\Omega}\ra \widehat{\Omega}$ sending $o$ to $o$ up to a left $B$-translation.
If $z \in N^+o$ we let $\nu_z\in N^+$ be the element sending $z$ to $o$, we may consider the 
composition $H\circ \nu_z\circ F$ as an element of  $S_k(\widehat{\Omega})$. 
It is clear that this is well defined.

For any positive integer $d$,we can  naturally extend this to a map
$$\psi_k:S_k(\widehat{\Omega})^d\times (N^+\backslash J_k(\widehat{\Omega}))^d
\ra S_k(\widehat{\Omega})^d$$

\subsection{Connection Formula}

For 2 maps $F_1,F_2:\widehat{\Omega}\ra \widehat{\Omega}$, it follows that 
$J_kF_1\circ J_kF_2=J_k(F_1\circ F_2)$, from which it follows that 
$$
\psi_k\Big(S_k(F_1)(F_2(z)) ,  J_kF_2(\id_k(z))\Big) = S_k(F_1\circ F_2)(z).
$$ 
We refer to this equation as  the {\it connection formula}.

\subsection{Schwarzians of Automorphic Functions}

Let $\Gamma\subset G$ be a discrete subgroup such that $X=\Gamma\backslash\Omega$ 
is a Shimura Variety.  Consider a rational embedding\footnote{Since the complex function field of the $n$-dimensional projective variety $X$ is always isomorphic to that of an irreducible hypersurface in $\mathbb P^{n+1}$ and since $\widehat{\Omega}$ is rational, such an embedding always exists, and we could deal with only a single function if $X$ were rational, as is the case of $X(1)$ and the $j$-function.} 
$\phi=(\phi_1,\phi_2):X\ra\widehat{\Omega}^2$, and let $p_i=\phi_i\circ q: \Omega\ra \widehat{\Omega}$. 
Set $p=(p_1,p_2)$.
Writing the connection formula for $p_i, p_i^{-1}$ gives:
$$
\psi_k\Big(S_k(p_i^{-1}(p_i(z)),J_kp_i(\id_k(z))\Big) = S_k(\id).
$$

Now, inverting the action yields the relation 
$$
S_k(p_i^{-1})(p_i(z)) = \psi_k\Big(S_k(\id),J_kp_i(\id_k(z))^{-1}\Big).\leqno{(*)}
$$

Since $(*)$ shows that $\left(S_k(p_1^{-1})(p_1z)),S_k(p_2^{-1})(p_2(z))\right)$ is definable on a fundamental domain, 
and it is clearly $\Gamma$ invariant, it must be a (single-valued) algebraic function of $q(z)$. 
Since $\phi=(\phi_1,\phi_2)$ is a rational embedding, we conclude the following:

\begin{prop}\label{rationals}

$\left(S_k(p_1^{-1})(\phi_1(x))),S_k(p_2^{-1})(\phi_2(x))\right)$ is an algebraic function $R_k(x)$ for $x\in X$.

\end{prop}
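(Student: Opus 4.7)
The plan is to verify the three ingredients the authors flag --- $\Gamma$-invariance, definability on a fundamental domain, and algebraicity via Definable Chow --- and then use the rational embedding $\phi$ to express the result as an algebraic function of $x \in X$.

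First, since $p_i = \phi_i \circ q$ is $\Gamma$-invariant, the map
$z \mapsto \bigl(S_k(p_1^{-1})(p_1(z)),\, S_k(p_2^{-1})(p_2(z))\bigr)$
depends only on $q(z)$, so it descends to a (single-valued) holomorphic map $R : X \to S_k(\widehat{\Omega})^2$, defined on the Zariski-open subset of $X$ where each $p_i$ is a local biholomorphism. Next, I would verify definability on the Siegel fundamental domain $\FF$. Using identity $(*)$, the $i$-th component of $R \circ q$ equals $\psi_k\bigl(S_k(\id), J_k p_i(\id_k(z))^{-1}\bigr)$. Since $\psi_k$, $S_k(\id)$, jet-inversion, and the rational map $\phi_i$ are all algebraic, it suffices to show that $J_k q|_{\FF}$ is definable; but this follows from the definability of $q|_{\FF}$ (Klingler--Ullmo--Yafaev) together with the standard fact that taking formal derivatives of a definable holomorphic map is itself a definable operation in $\R_{\mathrm{an,exp}}$.

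With these two facts in hand, the graph of $R$ in $X \times S_k(\widehat{\Omega})^2$ is a closed, definable, complex analytic subset of a quasi-projective variety. By Definable Chow (Theorem \ref{defchow}), this graph is algebraic, so $R$ is a rational function on $X$. Composing with the rational embedding $\phi$, which is birational onto its image in $\widehat{\Omega}^2$, expresses the pair of Schwarzians as the required algebraic function $R_k(x)$.

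The main obstacle I anticipate is a clean treatment of the passage from $q|_{\FF}$ being definable to $J_k q|_{\FF}$ being definable, together with careful handling of the (proper analytic) locus where $p_i$ fails to be locally invertible so that the germ $p_i^{-1}$ is well defined. Both are routine but should be addressed so that $R$ is unambiguously defined on a Zariski-open subset of $X$, which is all that is needed to invoke Definable Chow and conclude algebraicity.
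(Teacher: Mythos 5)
Your proposal follows essentially the same route as the paper: the identity $(*)$ gives definability of $\left(S_k(p_1^{-1})(p_1(z)),S_k(p_2^{-1})(p_2(z))\right)$ on the fundamental domain $\FF$, $\Gamma$-invariance lets it descend to a single-valued function on $X$, Definable Chow yields algebraicity, and the rational embedding $\phi$ converts this into an algebraic function of $x$. The extra details you supply (definability of $J_kq|_{\FF}$ from that of $q|_{\FF}$, and restricting to the Zariski-open locus where the $p_i$ are local biholomorphisms) are exactly the points the paper leaves implicit, and they are handled correctly.
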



\begin{thm}\label{determinej}

Suppose that $U\subset\widehat{\Omega}$ be a connected open set, and  $F:U\ra \phi(X)\subset\widehat{\Omega}^2$ satisfies 
$$
\psi_3\Big(R_3(\phi^{-1}(F(z))), J_3F({\rm id}_3(z))\Big) = \left(S_3({\rm id}),S_3({\rm id})\right)
$$ 
Then there exists $g\in G(\C)$ such that $F(z)=p(gz)$. 

\end{thm}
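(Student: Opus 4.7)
The plan is to unpack the hypothesis using the connection formula of \S8.2, reducing it to the statement that the local compositions $p_i^{-1}\circ F_i$ have trivial third-order Schwarzian for $i=1,2$. After applying Corollary \ref{3suffices} to identify each such composition with the action of an element of $G(\C)$, the final move is to use that $\phi$ is an embedding to show the two elements $g_1,g_2$ so obtained differ only by an element of $\Gamma$, so they may be replaced by a single $g\in G(\C)$.

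I would first shrink $U$ to a simply-connected neighborhood on which consistent local branches of the $p_i^{-1}$ can be chosen. Writing $F=(F_1,F_2)$ and $x(z):=\phi^{-1}(F(z))$, Proposition \ref{rationals} gives
$$R_3(x(z))=\bigl(S_3(p_1^{-1})(F_1(z)),\,S_3(p_2^{-1})(F_2(z))\bigr).$$
The connection formula of \S8.2, applied componentwise to the pair $(p_i^{-1},F_i)$, yields
$$\psi_3\bigl(S_3(p_i^{-1})(F_i(z)),\,J_3F_i(\id_3(z))\bigr)=S_3(p_i^{-1}\circ F_i)(z),$$
so the hypothesis collapses to $S_3(p_i^{-1}\circ F_i)(z)=S_3(\id)$ for $i=1,2$; equivalently $J_3(p_i^{-1}\circ F_i)\circ \id_3\subset W_3$.

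Next I would invoke Corollary \ref{3suffices} to produce $g_i\in G(\C)$ with $(p_i^{-1}\circ F_i)(z)=g_iz$. The corollary is stated for self-maps of $\widehat{\Omega}$, but its proof is local in nature: Proposition \ref{fix2} identifies $W_3$ bijectively with $W_2$, so the Schwarzian condition is a third-order ODE system uniquely determined by any 2-jet, and the element $g_i\in G(\C)$ matching the 2-jet of $p_i^{-1}\circ F_i$ at a chosen basepoint gives the same germ. Hence $F_i(z)=p_i(g_iz)=\phi_i(q(g_iz))$ throughout $U$ by analytic continuation.

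To finish, since $F(z)\in\phi(X)$ and $\phi$ is an embedding, the equalities $\phi_i(q(g_iz))=F_i(z)$ for $i=1,2$ force $q(g_1z)=q(g_2z)$ for every $z\in U$. So there is $\gamma(z)\in\Gamma$ with $g_1z=\gamma(z)g_2z$; discreteness of $\Gamma$ and connectedness of $U$ then force $\gamma(z)$ to be a single $\gamma\in\Gamma$, whence $g_1=\gamma g_2$. Setting $g:=g_1$ and using $\Gamma$-invariance of $q$,
$$p(gz)=\bigl(\phi_1(q(g_1z)),\,\phi_2(q(\gamma g_2z))\bigr)=\bigl(\phi_1(q(g_1z)),\,\phi_2(q(g_2z))\bigr)=F(z),$$
as desired. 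The main technical obstacle I anticipate is the bookkeeping around local branches of $p_i^{-1}$ and carefully articulating the purely local form of Corollary \ref{3suffices}; the substantive geometric input, that $G$ acts freely on $\id_2(o)$ so that the 2-jet determines the full jet, is already Proposition \ref{fix2}.
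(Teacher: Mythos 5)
Your proposal follows essentially the same route as the paper: apply the connection formula together with Proposition \ref{rationals} to conclude that $S_3(p^{-1}\circ F)=(S_3(\id),S_3(\id))$, then invoke Corollary \ref{3suffices} to identify $p^{-1}\circ F$ with the action of an element of $G(\C)$. The extra steps you supply --- choosing local branches, noting that Corollary \ref{3suffices} applies to germs, and reconciling the two elements $g_1,g_2$ into a single $g$ via the embedding $\phi$ and discreteness of $\Gamma$ --- are details the paper's two-line proof elides, and they are handled correctly.
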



\begin{proof}
Using the connection formula and Proposition \ref{rationals}, we see that
\begin{align*}
S(p^{-1}\circ F)&=\psi_3\Big(S_3(p^{-1})(F(z)), J_3F(\id_3(z))\Big) \\
&=\psi_3\Big(R_3(\phi^{-1}(F(z))),  J_3F(\id_3(z))\Big) \\
&=(S_3(\id),S_3(\id))
\end{align*}
But now Theorem \ref{3suffices} shows that $(p^{-1}\circ F)(z)=gz$ for some $g\in G(\C)$, 
which completes the proof.
\end{proof}

Note that Theorem \ref{7.4} shows that one may use $R_2(z)$ in 8.2
in the case where $\Omega = \mathbb B^n$, $n \ge 2$, and Theorem \ref{7.5} shows that one may use $R_1(z)$ in the case where $\Omega$ is irreducible and of rank $\ge 2$, with obvious generalizations to the reducible cases (cf.\,the last paragraph of \S7).

\section{Ax-Schanuel with Derivatives}

\begin{thm}\label{ASD}

Let $k\geq 2$ and $r\geq\dim X$ be positive integers. With notation as above, 
let $W\subset J^{\nd,r}_k\Omega\times J^{\nd,r}_kX$ be an algebraic subvariety. 
Let $U$ be an irreducible component of $W\cap J^{\nd,r}_kD$ whose 
dimension  is larger than expected, that is,
$$
\dim W < \dim U + \dim G. 
$$
Then the projection of $U$ to $X$ is contained in a proper weakly special subvariety of $X$.

\end{thm}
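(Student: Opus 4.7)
The plan is to run the same argument as in \S4 with $\Omega\times X$ replaced throughout by the jet bundles $J^{\nd,r}_k\Omega\times J^{\nd,r}_kX$, using the Schwarzian theory of \S7--\S8 in place of the direct appeal to ``Definable Chow'' on $X\times X$ used in \S3. The appearance of $\dim G$ in the atypical inequality (in place of $\dim X$ in Theorem \ref{main}) reflects Proposition \ref{fix2}: $G$ acts freely on $J^{\nd}_k\widehat{\Omega}$ for $k\ge 2$, so that a non-degenerate $k$-jet of a $G(\C)$-translate of $q$ determines the translating element uniquely. Consequently $J^{\nd,r}_kD$ should be viewed as a $G$-principal structure rather than a graph of a covering map, and the expected codimension of $W\cap J^{\nd,r}_kD$ is $\dim G$.

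First I would arrange a triple induction as in the proof of Theorem \ref{main}: upward on $\dim\Omega$, upward on $\dim W-\dim U$, and downward on $\dim U$. Then I would carry out the Hilbert scheme construction of \S\ref{hilbert} verbatim inside a projective compactification of $J^{\nd,r}_k\Omega\times J^{\nd,r}_kX$, producing an irreducible component $A(k)'\subset A(k)$ containing $U\times[W]$, the associated $\Gamma$-invariant, fundamental-domain-definable closed complex analytic set $Z'$, and its $\Gamma$-quotient $V$. The central new step is the analogue of Lemma \ref{algT}: that $V$ is algebraic. Here Definable Chow cannot be applied on the nose, since $\Gamma\backslash J^{\nd,r}_k\Omega$ is not a quasi-projective variety. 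Instead, one uses Proposition \ref{rationals}, which expresses the $\Gamma$-invariant Schwarzian data as rational functions $R_k$ on $X$, together with Theorem \ref{determinej}, which characterises when jet data comes from $G(\C)$-translates of $q$. In combination with Corollary \ref{consdefchow}, these identify $V$ with a constructible algebraic subvariety of an appropriate algebraic jet bundle over $X\times X$.

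From this point the argument parallels \S4 closely. Since the projection of $U$ to $X$ is not contained in a proper weakly special subvariety, Andr\'e--Deligne gives that the monodromy $\Gamma_1$ of $V$ is Zariski dense in $G(\R)$; the stabiliser $\Gamma_0$ of a very general member of the family $F_0$ is then normalised by $\Gamma_1$, so the connected component $\Theta$ of its Zariski closure is a normal subgroup of $G(\R)$, as in Lemma \ref{normalsub}. If $\Theta$ is nontrivial then $G$ and $\Omega$ split, and since jet bundles respect products the induction on $\dim\Omega$ closes this case. Assuming $\Theta$ is trivial, I would introduce
$$ I=\{\gamma\in G(\R)\mid \dim_{\R}\bigl((\gamma\cdot W)\cap J^{\nd,r}_kD\cap(J^{\nd,r}_k\FF\times J^{\nd,r}_kX)\bigr)=\dim_{\R}U\}, $$
establish the Hwang--To exponential volume lower bound for the intersections in question (inherited from the base after bounding the fibre volumes in the jet direction using definability), apply Pila--Wilkie to obtain algebraic curves $C\subset I$ with arbitrarily many integer points, and conclude by the dichotomy of Lemma \ref{infinitesub}: either $W$ is stabilised by $C\cdot C^{-1}$, producing an infinite subgroup of $\Gamma$ fixing $W$ and contradicting the triviality of $\Theta$, or the Zariski closure of $C\cdot W$ contradicts the induction hypothesis on $\dim W-\dim U$ or $\dim U$.

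The main obstacle, in my view, is the algebraicity step substituting for Lemma \ref{algT}: precisely formulating the descent of $Z'\subset J^{\nd,r}_k\Omega\times J^{\nd,r}_kX$ to an algebraic subvariety over $X\times X$ via the Schwarzian machinery. Proposition \ref{rationals} and Theorem \ref{determinej} provide the tools, but one has to express the full jet coordinates appearing in $Z'$ in terms of the Schwarzian image together with the composition law $\psi_k$ of \S8.1, so that Corollary \ref{consdefchow} can be invoked. A secondary technical issue is verifying the Hwang--To volume bound in the jet bundle setting with constants uniform over the Siegel fundamental domain, for which the definability of $q|_{\FF}$ and its derivatives (Klingler--Ullmo--Yafaev) is the essential input controlling the volume contribution of the jet directions.
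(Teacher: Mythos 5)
Your overall architecture matches the paper's: the same induction scheme (the paper also inducts upward on $r$), the Hilbert scheme construction run inside the compactification $BJ^r_k\widehat{\Omega}\times BJ^r_k\widehat{X}$, the Andr\'e--Deligne/normality argument, and the Hwang--To/Pila--Wilkie endgame carried over verbatim from Lemma \ref{infinitesub}. However, you misplace the difficulty in the algebraicity step. The descent is not to $\Gamma\backslash J^{\nd,r}_k\Omega$ viewed as an abstract quotient but through the map $q\times{\rm id}$ into $J^{\nd,r}_kX\times J^{\nd,r}_kX$, and $J^{\nd,r}_kX$ \emph{is} quasi-projective (a jet bundle over the quasi-projective $X$), so Definable Chow applies on the nose; the paper's Lemma \ref{algTD} is proved ``the same as Lemma \ref{algT}'' with no Schwarzian input. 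The Schwarzian machinery of \S7--\S8 is not used in the proof of Theorem \ref{ASD} at all; it serves Part III and the explicit form of the differential equations. Your proposed detour through Proposition \ref{rationals} and Theorem \ref{determinej} is therefore unnecessary, though not in itself wrong.

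The genuine gap is in the case $\Theta\neq 1$. You assert that ``since jet bundles respect products the induction on $\dim\Omega$ closes this case,'' but running the base-case splitting argument verbatim only yields $\dim U+\dim X\le\dim W$, whereas you must contradict $\dim W<\dim U+\dim G$, and $\dim G>\dim X$. The mechanism that upgrades $\dim X$ to $\dim G$ is Lemma \ref{graphclosure}: $G(\C)\cdot J^{\nd,r}_kD$ is closed, algebraic, and equal to the Zariski closure of $J^{\nd,r}_kD$, so by the induction on $\dim W$ one may assume $W\subset G(\C)\cdot J^{\nd,r}_kD$ (otherwise intersect and lower $\dim W$). Combined with Proposition \ref{fix2} (free action of $G$ on non-degenerate $k$-jets, $k\ge 2$, which you correctly cite as the heuristic source of $\dim G$), the invariance of $W$ under $\Theta(\C)$ then forces the fibre over the $\Theta$-factor to contribute $\dim\Theta$ rather than $\dim X_\Theta$, and adding the inductive bound $\dim U'+\dim\Theta'\le\dim W''$ on the other factor gives $\dim U+\dim G\le\dim W$. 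Without invoking Lemma \ref{graphclosure} (or some substitute establishing that $W$ lies in the $G(\C)$-orbit of the jet graph) your induction does not close, so this step needs to be made explicit and justified.
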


\bigbreak

We first need to state a lemma.

\begin{lemma}\label{graphclosure}

For $k\geq 2$, consider the graph 
$$
J^{\nd,r}_kD\subset J^{\nd,r}_k\Omega\times J^{\nd,r}_kX
$$
of the projection map $J^r_kq:J^{\nd,r}_k\Omega\ra J^{\nd,r}_kX$. 
Then $G(\C)\cdot J^{\nd,r}_kD$ is a closed, algebraic subvariety, 
and is the Zariski closure of $J^{\nd,r}_kD$.

\end{lemma}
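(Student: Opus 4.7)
The approach is to combine the definability of $q$ on a fundamental domain with the algebraicity of the $G(\C)$-action on $J^{\nd,r}_k\widehat{\Omega}$, and apply the results of \S3 (Definable Chow). Freeness of the $G(\C)$-action on $J^{\nd}_k\widehat{\Omega}$ from Proposition \ref{fix2} (which requires $k \geq 2$) plays a crucial role throughout.

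First I would observe that since $q$ is definable on the Siegel fundamental domain $\FF$, its $k$-jet $J^r_k q$ is definable on the jets based at $\FF$. Hence $J^{\nd,r}_k D \cap (J^{\nd,r}_k\FF \times J^{\nd,r}_k X)$ is definable and complex analytic in $J^{\nd,r}_k\widehat{\Omega} \times J^{\nd,r}_k X$. Since $\Gamma \subset G(\C)$, one has
$$
G(\C) \cdot J^{\nd,r}_k D = G(\C) \cdot \bigl(J^{\nd,r}_k D \cap (J^{\nd,r}_k\FF \times J^{\nd,r}_k X)\bigr),
$$
and this orbit is definable because $G(\C)$ acts algebraically on $J^{\nd,r}_k\widehat{\Omega}$. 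Using Proposition \ref{fix2} together with the freeness of $\Gamma$ on $\Omega$ (\S2.1), the orbit map $(g,j) \mapsto (gj, J^r_k q(j))$ has fibres given by the diagonal $\Gamma$-action $\gamma\cdot(g,j)=(g\gamma^{-1},\gamma j)$, so the image is identified with the smooth quotient $(G(\C)\times J^{\nd,r}_k\Omega)/\Gamma$, a complex manifold of dimension $\dim G + \dim J^{\nd,r}_k\Omega$. A normalization argument — given a convergent sequence of orbit points, replace $g_n$ by $g_n\gamma_n^{-1}$ and $j_n$ by $\gamma_n j_n$ to push $j_n$ into $\overline{\FF}$ — then establishes closedness in the complex topology.

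With $G(\C) \cdot J^{\nd,r}_k D$ shown to be definable, closed and complex analytic in the quasi-projective variety $J^{\nd,r}_k\widehat{\Omega} \times J^{\nd,r}_k X$, Theorem \ref{defchow} yields algebraicity. For the Zariski closure identification, one inclusion is trivial: the orbit is closed algebraic and contains $J^{\nd,r}_k D$. For the other, Borel density gives that $\Gamma$ is Zariski dense in $G(\C)$, so that $J^{\nd,r}_k D = \Gamma \cdot J^{\nd,r}_k D$ is Zariski dense in $G(\C) \cdot J^{\nd,r}_k D$, giving equality.

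The main obstacle is the closedness of the orbit, since the orbit map from the non-compact $G(\C) \times J^{\nd,r}_k\Omega$ is only an immersion and not a priori proper. One may bypass this entirely by using the Schwarzian machinery of \S7--\S8: Proposition \ref{rationals} gives an algebraic function $R_k$ on $X$, and Theorem \ref{determinej} characterizes $q\circ g$ via its Schwarzian. Hence $G(\C) \cdot J^{\nd,r}_k D$ coincides with the zero locus of an explicit algebraic Schwarzian-type system on the jet coordinates, giving closedness and algebraicity simultaneously and making the argument independent of the normalization step.
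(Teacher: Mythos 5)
The skeleton of your argument (definability of the orbit via the fundamental domain, Definable Chow, and Zariski density of $\Gamma$ in $G$ for the identification with the Zariski closure) agrees with the paper's. The genuine gap is in the \emph{closedness} of $Y = G(\C)\cdot J^{\nd,r}_kD$, which is the hardest step of the lemma and which neither of your two suggested routes delivers. Your normalization argument at best controls the jets: after moving base points into $\overline{\FF}$ and using convergence of $J^r_kq(j_n)$, one may extract a convergent subsequence $j_n\to j_\infty$ in $J^{\nd,r}_k\Omega$. But even granting that, it does not follow that $g_n$ converges in $G(\C)$: the $G(\C)$-action on $J^{\nd,r}_k\widehat{\Omega}$ is free (Proposition \ref{fix2}) but not proper, and orbits of an algebraic group action are in general only locally closed. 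The limit of $g_n j_n$ can perfectly well lie in $\overline{G(\C)\cdot j_\infty}\setminus G(\C)\cdot j_\infty$, and your argument gives no way to rule this out. This is precisely the point the paper's proof is built around: it first shows $\overline{Y}$ is complex analytic (proper mapping theorem applied to a rational extension of the action to projective compactifications, using the weighted-projective compactification $BJ^r_k$), and then kills the boundary $Z=\overline{Y}-Y$ by an equidimensionality argument: $Z$ is invariant under the commuting actions of $G(\C)$ and of the automorphisms of $\D^{r}_{k}$, which together act transitively on $J^{\nd,r}_k\widehat{\Omega}$, so $Z$ is equidimensional over each factor; since $\dim Z<\dim Y$, its fibres over $J^{\nd,r}_kX$ have dimension $<\dim G$, and being $G(\C)$-invariant under a free action they must be empty.

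Your proposed Schwarzian bypass does not repair this. The condition in Theorem \ref{determinej} involves the algebraic (in general only rational, possibly multivalued) function $R_3$ and the rational maps $\psi_3$, so the locus it cuts out in jet coordinates is a priori only constructible, not closed; indeed the paper's own use of exactly this mechanism in \S11 yields only that $V^k_m=G(\C)\cdot J^k_mD$ is \emph{constructible} algebraic. So the Schwarzian route gives constructibility and the algebraicity of the closure, but the passage from constructible to closed still requires the boundary analysis above.
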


\begin{proof}

First, note that $J^{\nd,r}_kD$ is invariant under $\Gamma$, and therefore its Zariski closure 
is invariant under $G(\C)$. Thus, it is sufficient to show that $Y := G(\C)\cdot J^{\nd,r}_kD \subset J^{\nd,r}_k\widehat{\Omega}\times J^{\nd,r}_kX$ is a (closed) algebraic subvariety. 	


We claim that $Y \subset J^{\nd,r}_k\widehat{\Omega}\times J^{\nd,r}_kX$ is constructible complex analytic.  Consider the holomorphic map $\Psi_0: G(\C) \times J^{\nd,r}_k\widehat{\Omega} \to J^{\nd,r}_k\widehat{\Omega}$ defined by $\Psi_0(\gamma,\omega) = \gamma\cdot\omega$.  Since $G(\C)$ acts algebraically on $X$, there exist projective compactifications\footnote{We postpone to 9.1 the construction of a projective compactification of $J^{r}_k\widehat{\Omega}$ as a bundle of weighted projective spaces.} $G(\C)'$ of $G(\C)$ and $(J^{\nd,r}_k\widehat{\Omega})'$ of $J^{\nd,r}_k\widehat{\Omega}$ such that $\Psi_0$ extends to a rational map $\Psi_0' : G(\C)' \times (J^{\nd,r}_k\widehat{\Omega})' \to (J^{\nd,r}_k\widehat{\Omega})'$.
Write $\Psi = \Psi_0\times {\rm id}$, i.e., $\Psi:  (G(\C) \times (J^{\nd,r}_k\widehat{\Omega}))\times J^{\nd,r}_kX \to J^{\nd,r}_k\widehat{\Omega}\times J^{\nd,r}_kX$ is given by $\Psi(\gamma,\omega,\nu) = (\gamma\cdot\omega,\nu)$. Define $\Psi' := \Psi_0' \times {\rm id}: (G(\C)' \times (J^{\nd,r}_k\widehat{\Omega})')\times J^{\nd,r}_kX \to (J^{\nd,r}_k\widehat{\Omega})'\times J^{\nd,r}_kX$.  Applying the proper mapping theorem to the graph of the restriction of $\Psi'$ to $G(\C)' \times J^{\nd,r}_kD \subset G(\C)' \times ((J^{\nd,r}_k\widehat{\Omega})'\times J^{\nd,r}_kX)$ and noting that $Y$ is $\Gamma$-invariant, we deduce that $\overline Y \subset J^{\nd,r}_k\widehat{\Omega}'\times J^{\nd,r}_kX$ is a complex analytic subvariety, hence
$Y \subset \overline{Y}$ is constructible complex analytic.  

Next, we argue that $Y \subset J^{\nd,r}_k\widehat{\Omega}\times J^{\nd,r}_kX$ is a closed subset.  Observe by Proposition \ref{fix2} that $G(\C)$ acts freely on $J^{\nd,r}_k\widehat{\Omega}\times J^{\nd,r}_kX$ so that any $G$-invariant constructible complex analytic subset of $J^{\nd,r}_k\widehat{\Omega}\times J^{\nd,r}_kX$ is of dimension $\ge \dim G$.
Now, note that $Y$ is invariant under the automorphisms $A$ of the disc $\D^{r}_{k}$ 
and under the action of $G(\C)$. Moreover, these actions commute 
and $A\times G(\C)$ acts transitively on $J_k^{\nd,r}\widehat{\Omega}$. 
It follows that if we let $Z=\overline{Y}-Y$ be the 
boundary of $Y$, then $Z$ is equidimensional over the first factor  $J^{\nd,r}_k\widehat{\Omega}$. 
However, the pullback of $Y$ to $J_k^{\nd,r}\widehat{\Omega}\times J_k^{\nd,r}\widehat{\Omega}$ is symmetric,
and so $Z$ is also equidimensional over the second factor $J_k^{\nd,r}X$. 
Now since $\dim Z<\dim Y$ it follows that the dimension of the fibers of 
$Z$ over the second factor $J^{\nd,r}_k X$ are less 
than $\dim G$, which means they are empty since $Z$ is closed under the action of $G(\C)$,
proving the claim that $Y \subset J^{\nd,r}_k\widehat{\Omega}\times J^{\nd,r}_kX$ is closed.

Since $Y \subset J^{\nd,r}_k\widehat{\Omega}\times J^{\nd,r}_kX$ is closed and constructible complex analytic, it must be a complex analytic subvariety.
On the other hand, since $Y$ is given by $G(\C)$ acting on the restriction of $J^{\nd, r}_kD$ to $J^{\nd, r}_k\mathcal{F}$ it follows that it is definable, and by Theorem \ref{defchow} $Y$ is an algebraic subvariety, as desired.
\end{proof}

\begin{cor}

Let $z_1,\dots,z_n$ be an $N^+(\C)$-invariant
algebraic coordinate system on $\Omega$. 
Let $\{\phi_1,\dots,\phi_N\}$ be a $\C$-basis of modular functions.
Then the field generated by $\{\phi_i\}$ and their partial derivatives
with respect to the $z_j$ up to order $k\geq 2$ has transcendence 
degree over $\C$ equal to $\dim G$. Further, the transcendence degree is the same 
over $\C(z_1,\dots,z_n)$.

\end{cor}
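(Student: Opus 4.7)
The plan is to reinterpret the transcendence degree as $\dim S$, where $S := \overline{\sigma(\Omega)}^{\mathrm{Zar}} \subset J^{\nd,n}_k X$ is the Zariski closure of the image of the jet map $\sigma(z) = J^n_k q(\id_k(z))$, and to sandwich $\dim S$ between matching bounds. Using the rational embedding $\phi$ of \S 8 to give rational coordinates on $X$, the coordinates of $\sigma(z)$ in $J^n_k X$ are (up to an algebraic change of coordinates) exactly the $\phi_j(q(z))$ and the $\phi_j^{(\nu)}(z)$, so that $\mathrm{tr.deg.}_{\C} K = \dim S$, where $K$ is the field in the statement.

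For the lower bound I would form the graph $\tau(\Omega) \subset J^{\nd,n}_k\Omega \times J^{\nd,n}_k X$ with $\tau(z) = (\id_k(z), \sigma(z))$, set $T = \overline{\tau(\Omega)}^{\mathrm{Zar}}$, and take $U$ to be the irreducible component of $T \cap J^{\nd,n}_k D$ containing $\tau(\Omega)$. Since $\tau$ is generically injective we have $\dim U \geq n$, and the projection of $U$ to $X$ contains $q(\Omega) = X$ and is therefore not contained in any proper weakly special subvariety; Theorem~\ref{ASD} then yields $\dim T \geq n + \dim G$. Conversely, the inclusion $\tau(\Omega) \subset \id_k(\widehat{\Omega}) \times S$ forces $T \subset \id_k(\widehat{\Omega}) \times S$, so $\dim T \leq n + \dim S$, and combining gives $\dim S \geq \dim G$. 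For the matching upper bound I would invoke the Schwarzian theory of \S\S 7--8: Proposition~\ref{rationals} asserts that the Schwarzian $R_k$ is an algebraic function on $X$, and via the connection formula of \S 8.2 together with the free $G$-action on non-degenerate jets of order $\geq 2$ (Proposition~\ref{fix2}), this forces $\sigma(z)$ to lie in an algebraic subvariety $V \subset J^{\nd,n}_k X$ of dimension $\dim G$ --- the base $X$ contributing $n$ dimensions and the Schwarzian fibre contributing a further $\dim G - n$ from the stabiliser action on higher-order jets. Hence $\dim S \leq \dim V = \dim G$, giving $\dim S = \dim G$ and the first assertion. The second assertion then falls out: the chain $\dim T = n + \dim G = n + \dim S$ forces the inclusion $T \subset \id_k(\widehat{\Omega}) \times S$ to be an equality of dimensions, so $z_1, \ldots, z_n$ are algebraically independent over the function field of $S$, equivalently over $K$.

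The step I expect to be the main obstacle is the upper bound: correctly setting up the Schwarzian subvariety $V$ inside $J^{\nd,n}_k X$ (rather than inside the ambient $J^{\nd,n}_k \widehat{\Omega}^2$) and checking that, after pulling back the Schwarzian constraints from $\phi_1, \phi_2$ through the embedding $J^n_k \phi$, they cut out a subvariety of the expected dimension $\dim G$. The freeness result of Proposition~\ref{fix2} is the essential ingredient making the dimension count work.
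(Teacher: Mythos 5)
Your proposal is correct in outline and its skeleton matches the paper's: identify the transcendence degree with the dimension of the Zariski closure $S$ of the jet locus $z\mapsto J_kq(\id_k(z))$, and pin $\dim S=\dim G$ by matching bounds, the upper bound coming from the freeness of the $G$-action on non-degenerate jets of order $\ge 2$ (Proposition \ref{fix2}). The one place you genuinely diverge is the lower bound $\dim S\ge\dim G$. You get it by applying the full jet Ax--Schanuel Theorem \ref{ASD} to the graph $T$ of the jet map. This is legitimate and not circular (the proof of Theorem \ref{ASD} in \S10 relies on Lemma \ref{graphclosure}, not on this corollary), but it is a much heavier tool than the paper uses: the paper deduces everything from Lemma \ref{graphclosure} alone, together with the observation that the fibre of $\overline{J_kq(\id_k(\Omega))}^{\rm Zar}$ over a point $p\in X$ contains $\psi(\Gamma)$, where $\psi(g)=g\cdot\id_k(o)$ modulo the $N^+$-action, and $\psi(\Gamma)$ is Zariski-dense in $\psi(G(\C))$, a set of dimension $\dim G-\dim N^+=\dim G-n$; adding the $n$ base directions gives $\dim S\ge\dim G$. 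So the paper's corollary is strictly cheaper than Theorem \ref{ASD}, whereas your version makes it a consequence of the deep theorem. Both give the second assertion the same way, via $\dim T=n+\dim G$.

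On the upper bound, which you rightly flag as the delicate step: your dimension count ($n$ from the base, $\dim G-n$ from the stabiliser orbit in the jet fibre) is exactly right, but the key input you leave implicit is the $N^+(\C)$-invariance of the coordinate system. It is what makes the section $\id_k$ equivariant, $\id_k(\nu o)=\nu\cdot\id_k(o)$ for $\nu\in N^+$, hence forces every jet $\sigma(z)=J_kq(\id_k(z))=J_kq(\nu_z\cdot\id_k(o))$ into the single set $J_kq\bigl(G(\C)\cdot\id_k(o)\bigr)$, whose closure has dimension exactly $\dim G$ by Proposition \ref{fix2}. Without that hypothesis the jets $\id_k(z)$ for varying $z$ need not lie in one $G$-orbit and the containment $\dim S\le\dim G$ fails to follow. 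If you phrase the upper bound through Proposition \ref{rationals} and the connection formula, the same invariance is being used, since the Schwarzian formalism of \S8 is built on translating jets back to the base point by elements $\nu_z\in N^+$.
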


\begin{proof}

From Lemma \ref{graphclosure} applied to $r=\dim X = n$, it follows that 
$${\rm tr.deg\/}_{\C}\C(\{z_i\}, \{\phi_j^{(\nu)}\}) = \dim G+n.$$ 
The algebraic independence will therefore follow as soon as we show  
that the transcendence degree of $\C(\phi_1,\dots,\phi_N)$ is equal to $\dim G$. 

To see this, consider 
$$
V=({\rm id}\times q)(\id_k(\Omega))\subset J^{\nd,n}_k D.
$$
We have to show that the Zariski-closure $W$ of the projection of $V$ to
$J^{\nd,n}_kX$ has dimension $\dim G$. 
Let $p\in X$ be a point, and, without loss of generality, let $o\in \Omega$ be a pre-image. 

Identifying 
$N^+(\C)\backslash J^{\nd, \n}_k(N^+o)$  with $J^{\nd,n}_k(o)$ by quotienting out by the action of 
$N^+(\C)$, we get a rational map $\psi:G(\C)\ra J^{\nd,n}_k(o)$ by $\psi(g) = g\cdot \id_k(o)$. 
Now, the pre-image of $p$ in $W$ contains $\psi(\Gamma)$ so it must contain $\psi(G(\C))$.

It follows that $W$ contains $q\big(G(\C)\cdot\id_k(o)\big)$, 
hence by definability must be equal to its 
closure (definable since we can restrict to a fundamental domain, and its closure is complex analytic
since it is algebraic in $\widehat{\Omega}$).  Since $G(\C)$ acts freely 
on $J^{\nd,n}_k\widehat{\Omega}$ 
it follows that $\dim G(\C)\cdot\id_k(o) = \dim G$, as desired.
\end{proof}

\begin{remark}

The same argument shows that the projection
$$J_{k+1}q\left(G\cdot J^{\nd,n}_{k+1}({\rm id}_{k+1}(o))\right)\rightarrow
J_kq\left(G\cdot J^{\nd,n}_{k}({\rm id}_k(o))\right)
$$
is bijective for $k\geq 2$, and that
$J_kq\left(G\cdot J^{\nd,n}_{k}({\rm id}_k(o))\right)$ 
is the Zariski closure of the graph of the $\nu$'th partial derivatives of $q$ for $|\nu|\leq k$. 
It follows that, for $k\geq 2$, the $k$-th partial derivatives of $q$
are rational in the $\nu$'th partial derivatives of $q$ for $|\nu|\leq 2$. 
In other words, the field generated by all the partial derivatives of $q$ 
is generated by the partial derivatives of order $\leq 2$.

\end{remark}

\subsection{Compactifying Jet spaces}

We shall require a compactification of Jet spaces to discuss Hilbert schemes. 
Thus, we define 
$$BJ^r_kY:=\Hom(\D^r_k,Y)\times \mathbb{A}^1_{\C} / \mathbb{G}_{\rm m,\C}$$ 
where the action is defined via
$r\cdot(t\mapsto f(t),s):=(t\mapsto f(rt),rs)$. It is easy to see, 
by expanding into  local coordinates given by 
Taylor series 
coefficients, that $BJ^r_kY$ is a weighted projective space over $Y$, and is thus 
a projective variety if $Y$ is projective. This then gives a functorial compactification of $J^r_kY$.

\subsection{Descending Hilbert scheme loci}\label{hilbert}

Now we fix some algebraic subvariety $W\subset J^{\nd,r}_k\Omega\times J^{\nd,r}_kX$, 
with  $\widehat{W}\subset BJ^r_k\widehat{\Omega}\times BJ^r_k\widehat{X}$
its Zariski closure, and $U$ an irreducible component of $W\cap J^{\nd,r}_kD$.
We make no assumptions here on the dimension of $U$.

Let $M$ be the Hilbert 
scheme of all subvarieties of $BJ^r_k\widehat{\Omega}\times BJ^r_k\widehat{X}$ with  Hilbert polynomial $P$.
Then $M$ also has the structure of an algebraic variety. 
Corresponding to $y\in M$ we have the subvariety $W_y\subset J^{\nd,r}_k\Omega\times J^{\nd,r}_kX$,
and we have the incidence variety (universal family)

\[
B=\{(z,x,y)\in J^{\nd,r}_k\Omega\times J^{\nd,r}_kX\times M: (z,x)\in W_y\},
\]
and the family of the intersections of its fibres over $M$ with $J^{\nd,r}_kD$, namely
\[
A=\{(z,x,y)\in J^{\nd,r}_k\Omega\times J^{\nd,r}_kX\times M: (z,x)\in W_y\cap J^{\nd,r}_kD\}.
\]
Then $A$ is a closed complex analytic subset of $J^{\nd,r}_k\Omega\times J^{\nd,r}_kX\times M$. 
It has natural projection $\theta: A\rightarrow M$, with $(z,x,y)\mapsto y$. 
Then, for each natural number $k$, the set
\[
A(k)=\{(z,x,y)\in J^{\nd,r}_k\Omega\times J^{\nd,r}_kX\times M: \dim_{(z,x)}\theta^{-1}\theta(z,x,y)\ge k\},
\]
the dimension being the dimension at $(z,x)$ of the fibre of the projection in $A$,
is closed and complex analytic see e.g. the proof of \cite{PSNEWTON}, Lemma 8.2,
and references there.

Now we have the projection $\psi: J^{\nd,r}_k\Omega\times J^{\nd,r}_kX\times M\rightarrow J^{\nd,r}_k\Omega\times J^{\nd,r}_kX$,
and consider$$Z=Z(k)=\psi(A(k)).$$
Then as $M$ is compact, $\psi$ is proper and so $Z$ is closed in $J^{\nd,r}_k\Omega\times J^{\nd,r}_k X$. 
Note that $Z$ is $\Gamma$-invariant
and $Z\cap J^{\nd,r}_k\FF\times J^{\nd,r}_kX$ is definable.

\begin{lemma}\label{algTD}
Let $T=(q\times{\rm id})(Z)$. Then $T\subset J^{\nd,r}_kX\times J^{\nd,r}_kX$ is closed 
and algebraic.
\end{lemma}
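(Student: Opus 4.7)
My plan is to mimic the proof of Lemma \ref{algT} in the jet-space setting, using the same three ingredients: $\Gamma$-invariance, definability on a fundamental domain, and Definable Chow (Theorem \ref{defchow}).

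First I would verify the three hypotheses that were explicitly recorded just before the lemma: (i) $Z$ is invariant under the action of $\Gamma$ on the first factor, which is inherited from the $\Gamma$-invariance of $J^{\nd,r}_kD$ and the fact that $A(k)$ is constructed from a $\Gamma$-equivariant setup; (ii) $Z \cap (J^{\nd,r}_k\FF \times J^{\nd,r}_kX)$ is definable, because the map $J^r_kq$ restricted to $J^{\nd,r}_k\FF$ is definable (derivatives of the definable map $q|_{\FF}$ remain definable in $\R_{\textrm{an,exp}}$), while the Hilbert scheme $M$ is an algebraic variety and hence definable, and the incidence variety $A$ and its dimension-$k$ locus $A(k)$ are cut out of algebraic-times-analytic data by conditions that are uniformly definable.

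Next I would address closedness of $T$ inside the quasi-projective variety $J^{\nd,r}_kX \times J^{\nd,r}_kX$. Since $\psi$ is proper (because $M$ is compact), $Z$ is closed in $J^{\nd,r}_k\Omega \times J^{\nd,r}_kX$; and because $\Gamma$ acts properly discontinuously on $J^{\nd,r}_k\Omega$ (the action being free by Proposition \ref{fix2}) and trivially on the second factor, the induced map $J^r_kq \times \mathrm{id}$ is a covering map onto its image, which is therefore a closed complex analytic subset of $J^{\nd,r}_kX \times J^{\nd,r}_kX$. Equivalently, $T$ equals the image of the closed set $Z \cap (J^{\nd,r}_k\FF \times J^{\nd,r}_kX)$ under the definable, proper-on-its-image map $J^r_kq \times \mathrm{id}$.

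Combining these facts, $T$ is a closed complex analytic subset of the quasi-projective variety $J^{\nd,r}_kX \times J^{\nd,r}_kX$ and is the image under a definable map of a definable set, hence is itself definable. Applying Definable Chow (Theorem \ref{defchow}) then yields that $T$ is algebraic, which is the desired conclusion.

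I expect the main obstacle to be the bookkeeping around definability in the jet setting: one must know that passing from $q$ on a Siegel fundamental domain to its $k$-th jet prolongation $J^r_kq$ on $J^{\nd,r}_k\FF$ preserves definability in $\R_{\textrm{an,exp}}$, and that the Hilbert scheme parametrization can be arranged so that the fiberwise dimension condition defining $A(k)$ is itself definable. Once these are in place, the structural argument is identical to that of Lemma \ref{algT}; the only genuinely new input is the freeness of the $G(\C)$-action on $J^{\nd,r}_k\widehat{\Omega}$ from Proposition \ref{fix2}, which ensures that the quotient by $\Gamma$ on jet spaces behaves as well as it does on $\Omega$ itself.
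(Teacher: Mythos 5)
Your proof is correct and takes essentially the same route as the paper, which simply says ``The same as Lemma \ref{algT}'' (itself reducing to Theorem \ref{alginXxX}: the image is closed complex analytic, it coincides with the image of the definable piece over $J^{\nd,r}_k\FF\times J^{\nd,r}_kX$, and Definable Chow finishes). Your extra appeal to Proposition \ref{fix2} for freeness of the $\Gamma$-action is harmless but unnecessary: $\Gamma$ already acts freely on $J^{\nd,r}_k\Omega$ because it acts freely on $\Omega$ (having passed to a torsion-free finite-index subgroup in \S2.1), and any jet fixed by $\gamma$ has its base point fixed.
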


\begin{proof}
The same as Lemma \ref{algT}.
\end{proof}

\medskip

%
%

\bigbreak

\section{Proof of Theorem \ref{ASD}}

\bigskip

\begin{proof}
We argue by induction, in the first instance (upward) on $\dim\Omega$.
For a given $\dim\Omega$, we argue (upward) on $\dim W-\dim U$.
We then argue by induction (downward) on $\dim U$, and finally upward on $r$.

We  carry out the constructions of \S\ref{hilbert} with $k=\dim U$ and keep the notation there. 
We let $A(k)'\subset A(k)$  be the irreducible component which contains $U\times [W]$, 
and $Z'=\psi(A'(k))\subset Z$ be the
corresponding irreducible component of $Z$, and $V=(q\times{\rm id})(Z')$ the irreducible 
component of $T$, which is therefore algebraic by Lemma \ref{algT}.
Now, by assumption $V$ contains $q(U)$, and so it is not contained in any proper weakly special of the diagonal $J^{\nd,r}_k\Delta_X$, and thus has Zariski-dense monodromy.

\medskip

Consider the family  $F_0$ of algebraic varieties corresponding to $A(k)'$. 
Let $\Gamma_0\subset\Gamma$ 
be the subgroup of elements $\gamma$ such that a very general member of $F_0$ 
is invariant by $\gamma$. 
Note that a very general element $W'$ of $F_0$ is  invariant by exactly the subset $\Gamma_0$ of 
$\Gamma$. Let $\Theta$ be the connected component of the 
Zariski closure of $\Gamma_0$ in $G(\R)$. 

\medskip

\begin{lemma}

$\Theta$ is a normal subgroup of $G(\R)$. 

\end{lemma}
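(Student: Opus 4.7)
The plan is to mirror the argument of Lemma \ref{normalsub} from Part I, now in the jet-space setting. First I would exhibit the natural $\Gamma$-action on $A(k)$ given by $\gamma \cdot (z, x, [W]) = (\gamma \cdot z, x, [\gamma \cdot W])$, where $G(\C)$ acts on jets through its action on $\widehat{\Omega}$. The projection $A(k) \to Z$ is equivariant; since it is proper (owing to the compactness of $M$ inside the compactified jet bundles $BJ^r_k\widehat{\Omega}$ and $BJ^r_k\widehat{X}$ constructed in \S9.1) and $\Gamma$ acts discretely, one obtains a proper map of analytic varieties $\Gamma\backslash A(k) \to \Gamma\backslash Z \cong T$.

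Restricting to the irreducible component $A(k)'$, I would consider the induced proper analytic map $\phi: \Gamma\backslash A(k)' \to V$; properness guarantees that the fibers have only finitely many connected components. Letting $\Gamma_1$ denote the image of the composition $\pi_1(\Gamma\backslash A(k)') \to \pi_1(V) \to \Gamma$, this image is of finite index in the monodromy group of $V$. Since $V$ has Zariski-dense monodromy, as noted immediately before the statement of the lemma (because $V \supset q(U)$ is not contained in any proper weakly special of the diagonal $J^{\nd,r}_k\Delta_X$), it follows by Andr\'e--Deligne that $\Gamma_1$ is Zariski-dense in $G(\R)$.

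To conclude, observe that the equivariance of the $\Gamma$-action forces $F_0$ to be preserved by $\Gamma_1$, while for any stabilizer we have ${\rm stab}(\gamma \cdot W') = \gamma \cdot {\rm stab}(W') \cdot \gamma^{-1}$. Thus conjugation by $\Gamma_1$ preserves $\Gamma_0$, and hence also its Zariski closure, so in particular its identity component $\Theta$. Taking Zariski closures of normalizers, $\Theta$ is normalized by all of $G(\R)$, so it is normal, as claimed.

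The main subtlety I anticipate is to verify that properness of the relevant projections, and the resulting structure of analytic quotients, transfers cleanly to the jet-space setting; the functorial compactifications $BJ^r_k\widehat{\Omega}$ of \S9.1 are precisely what is needed to make the Hilbert-scheme construction proper. Beyond this, the argument is essentially a formal reprise of the basic case, with the caveat that Andr\'e--Deligne is applied to the underlying Shimura-theoretic monodromy of $V$ projected to $X \times X$ rather than to $V$ intrinsically.
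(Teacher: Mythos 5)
Your proposal is correct and takes essentially the same approach as the paper: the paper simply states ``This is proven exactly as in Lemma \ref{normalsub},'' and you have faithfully transcribed that argument into the jet-space setting, correctly identifying that the compactified jet bundles $BJ^r_k\widehat{\Omega}$ of \S 9.1 supply the properness needed for the Hilbert-scheme construction and the quotient map $\Gamma\backslash A(k)' \to V$.
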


\begin{proof}

This is proven exactly as in Lemma \ref{normalsub}.
\end{proof}
%
%

\begin{lemma} $\Theta$ is the identity subgroup.

\end{lemma}

\begin{proof}

We argue by contradiction.  Without loss of generality we may assume 
that $W$ is a very general member of $F_0$, and hence is invariant by exactly $\Theta$.
Since $\Theta$ is a $\Q$-group by construction, 
it follows that $G$ is isogenous to $\Theta\times\Theta'$ and we have a 
splitting of Hermitian symmetric domains $\Omega=\Omega_{\Theta}\times\Omega_{\Theta'}$. 
Replacing $\Gamma$ by a finite index subgroup we 
also have a splitting $X\cong X_{\Theta}\times X_{\Theta'}$. Moreover, $D$
splits as $D_{\Theta}\times D_{\Theta'}$.

By our induction on $\dim W$, it follows from Lemma \ref{graphclosure} that
$W\subset G(\C)\cdot J^{\nd,r}_kD$.  Now, we let  
$$
W_1\subset J^{\nd,r}_k\Omega_{\Theta'}\times J^{\nd,r}_kX_{\Theta'}\times J^{\nd,r}_kX_{\Theta}
$$
be the projection of $W$. Let $U_1$ be the projection of $U$ to $W_1$. 
Since the map from $D_{\Theta}$ to $X_{\Theta}$ has discrete pre-images, 
it follows  that $\dim U = \dim U_1$. 

Now let $W'$ be the projection of $W_1$ to $\Omega_{\Theta'}\times X_{\Theta'}$ and  
$U'$ be a component of $W'\cap D_{\Theta'}$. Since $W\subset G(\C)\cdot J^{\nd,r}_kD$ 
and $W$ is closed under $\Theta(\C)$ we  see that $U'$ is the projection of $U$ to 
$\Omega_{\Theta'}\times X_{\Theta'}$. Now let $W''$ be the Zariski closure of $U'$. 
It follows by induction that $$\dim U'+\dim\Theta'\leq \dim W''.$$

Now for the projection map $\psi: W_1\ra W'$, the generic fiber dimension of $W''$ 
is the same as the generic fiber dimension over $U'$, and thus 
$$\dim U_1 + \dim\Theta'\leq \dim \psi^{-1}(W'')\leq \dim W_1.$$ 
Since $W$ is invariant under $\Theta(\C)$ it follows that  
$$\dim U +\dim G \leq \dim W$$ as desired.
\end{proof}

It follows that $W$ is not invariant by any infinite subgroup of $\Gamma$.  
The following lemma thus reaches a contradiction, and completes the proof. 

\begin{lemma}

$W$ is invariant by an infinite subgroup of $\Gamma$.

\end{lemma}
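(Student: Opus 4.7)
The plan is to mimic the proof of Lemma \ref{infinitesub} in the jet space setting, exploiting the fact that $J^{\nd,r}_k\FF \subset J^{\nd,r}_k\Omega$ serves as a fundamental domain on which $J^r_kq$ is definable. I would define the definable set
$$
I = \{\gamma \in G(\R) \mid \dim_\R\bigl((\gamma\cdot W) \cap J^{\nd,r}_kD \cap (J^{\nd,r}_k\FF \times J^{\nd,r}_kX)\bigr) = \dim_\R U\}
$$
which contains every $\gamma \in \Gamma$ such that $U$ meets $J^{\nd,r}_k(\gamma\FF) \times J^{\nd,r}_kX$. The first step is to establish that the $\dim_\R U$-dimensional volume of these slices is uniformly bounded over $\gamma \in I$. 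This is done as in the base case: cover $\FF$ by finitely many Siegel coordinate charts $\Theta \hookrightarrow \prod_i J_i$ as in Klingler--Ullmo--Yafaev \cite{KUY}, and lift the fibration structure to jets. Because the jet directions lie in algebraic fibers on which definability forces fibers of projections to be uniformly finite, the bound transfers from the base case with only bookkeeping of the jet variables.

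Next I would apply the Hwang--To volume estimate \cite{HT} to conclude that the hyperbolic volume of $\gamma\cdot W \cap J^{\nd,r}_kD$ inside a ball $B(R) \subset J^{\nd,r}_k\Omega$ grows exponentially in $R$: the jet fibers are compact in the relevant coordinates and contribute at worst a bounded multiplicative factor, so the exponential growth in the base lifts. Combining with the uniform bound above, the same counting argument as in the base case shows that $I$ contains polynomially many integer matrices (ordered by height). Applying the Pila--Wilkie theorem \cite{PW} to the definable set $I$ produces an irreducible real algebraic curve $C \subset I$ containing arbitrarily many, and in particular at least two, integer points.

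From here the case analysis proceeds exactly as in Lemma \ref{infinitesub}. Writing $W_c := c\cdot W$ for $c \in C$: if $W_c$ is constant in $c$ then $W$ is stabilized by $C\cdot C^{-1}$, which meets $\Gamma$ in a non-identity element and hence gives infinite invariance (using $\Gamma$ torsion free). If $W_c$ varies with $c$, then either $U \subset W_c$ for $c \in C$, in which case replacing $W$ by $W_c \cap W_{c'}$ for generic $c,c' \in C$ strictly lowers $\dim W - \dim U$, contradicting the induction; or $W_c \cap J^{\nd,r}_kD$ varies with $c$, in which case the Zariski closure $W'$ of $C \cdot W$ satisfies $\dim W' = \dim W + 1$ and $\dim W' \cap J^{\nd,r}_kD = \dim U + 1$, contradicting the induction on $\dim U$. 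Using the countability of families of weakly special subvarieties, we can pick $c \in C$ integral so that $(q\times\id)(W_c\cap J^{\nd,r}_kD)$ is not contained in a proper weakly special subvariety, keeping the induction hypotheses applicable.

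The main obstacle I anticipate is verifying that the Hwang--To exponential volume growth and the Klingler--Ullmo--Yafaev upper volume bound carry over cleanly to $J^{\nd,r}_k\Omega$. The jet space is not itself a symmetric domain but a bundle over $\Omega$ whose fibers are (open subsets of) affine spaces; the nondegeneracy condition and the use of the compactification $BJ^r_k\widehat\Omega$ from \S9.1 should make the fiber contribution to volume uniformly controlled, but this requires a careful check that the invariant K\"ahler form on $\Omega$ and the algebraic structure on the fibers combine to give both the required upper bound in the Siegel chart argument and the exponential lower bound of Hwang--To. Once these volume estimates are in hand, the rest of the argument is a verbatim transcription of Lemma \ref{infinitesub}.
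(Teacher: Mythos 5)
The paper's own proof of this lemma is a one-line reference: ``This is proven exactly as Lemma \ref{infinitesub}.'' Your expansion follows that route faithfully, and the definability, Pila--Wilkie, and case-analysis portions transcribe without issue. Your concluding paragraph is also right to flag the one point that genuinely needs verification: the Hwang--To lower bound and the Klingler--Ullmo--Yafaev Siegel-coordinate upper bound are proved for complex analytic subvarieties of the \emph{bounded symmetric domain} $\Omega$, not for the jet bundle $J^{\nd,r}_k\Omega$.

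The specific resolution you propose mid-argument, however, is incorrect as stated. You write that ``the jet fibers are compact in the relevant coordinates and contribute at worst a bounded multiplicative factor, so the exponential growth in the base lifts.'' The fibers of $J^{\nd,r}_k\Omega\to\Omega$ are \emph{not} compact: they are open subsets of affine spaces (truncated Taylor coefficients subject only to the open nondegeneracy condition), and passing to the weighted-projective compactification $BJ^r_k\widehat\Omega$ removes the boundary rather than making the relevant fibers compact. So volume estimates do not lift ``with a bounded multiplicative factor'' for free. The cleaner observation, which is what makes the paper's terse reference legitimate, is that one never needs to do volume estimates on $J^{\nd,r}_k\Omega$ at all: the jet fundamental domain $J^{\nd,r}_k\FF$ is exactly the preimage of $\FF$ under the base-point projection $b: J^{\nd,r}_k\Omega\to\Omega$, so the set of $\gamma\in\Gamma$ for which $U$ meets $J^{\nd,r}_k(\gamma\FF)\times J^{\nd,r}_kX$ is the same as the set of $\gamma$ for which the image of $U$ under $b\circ p_1$ in $\Omega$ meets $\gamma\FF$. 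The Hwang--To exponential growth and the KUY/definability upper bound then apply verbatim to this image inside $\Omega$, and the rest of the counting goes through. You should replace the compactness argument by this projection argument (and, if you want to be complete, note why the image in $\Omega$ is positive-dimensional, which is where the hypothesis that the projection of $U$ to $X$ is not contained in a weakly special subvariety is used).
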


\begin{proof}

This is proven exactly as Lemma \ref{infinitesub}.
 \end{proof}
 
 \noindent
This contradiction completes the proof of Theorem \ref{ASD}.
\end{proof}


As a corollary,  we have the following concrete statement about transcendence 
degrees of modular functions and their derivatives on analytic subvarieties.

\begin{cor}\label{trdeg}

Let $V\subset \Omega$ be an irreducible complex analytic variety, not contained in a proper 
weakly special subvariety. Let  $\{z_i, i=1,\ldots, n\}$ be an algebraic 
coordinate system on $\Omega$. Let $\{\phi_j^{(\nu)}\}$ consist of a basis $\phi_1,\ldots, \phi_N$ of 
modular functions, all defined at at least one point of $V$,
together with their partial derivatives with respect to the $z_j$ 
up to order $k\ge 2$. Then
$$
{\rm tr.deg.}_{\C}\C\big(\{z_i\}, \{\phi_j^{(\nu)}\}\big) \geq \dim G + \dim V
$$	
where all functions are considered restricted to $V$.

\end{cor}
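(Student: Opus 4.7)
The plan is to apply Theorem \ref{ASD} to an appropriate jet-space lift of $V$ and then translate the resulting dimension bound into a transcendence-degree estimate. Setting $n = \dim \Omega = \dim X$ and $r = n$, I consider the holomorphic map
$$
F: V \longrightarrow J^{\nd,n}_k\Omega \times J^{\nd,n}_kX,\qquad v \longmapsto \bigl(\id_k(v),\; J_kq(\id_k(v))\bigr).
$$
Since $q$ is unramified (as arranged in \S2.1), $J_kq(\id_k(v))$ is non-degenerate, so $F$ takes values in the stated space and, by construction, lands inside the graph $J^{\nd,n}_kD$. Let $W \subset J^{\nd,n}_k\Omega \times J^{\nd,n}_kX$ be the Zariski closure of $F(V)$, and let $U$ be an irreducible component of $W \cap J^{\nd,n}_kD$ containing $F(V)$. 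Because $F$ is injective (its $\Omega$-factor is $\id_k$), one has $\dim U \geq \dim F(V) = \dim V$. The further projection of $U$ to $X$ contains $q(V)$, and since $V$ is not contained in any proper weakly special subvariety of $\Omega$, neither is $q(V)$ in one of $X$. Theorem \ref{ASD} therefore rules out the atypical inequality, yielding
$$
\dim W \;\geq\; \dim U + \dim G \;\geq\; \dim V + \dim G.
$$

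It remains to convert this bound into the claimed transcendence degree. Since $W$ is the Zariski closure in an algebraic variety of the image of $F$, the transcendence degree over $\C$ of the field of rational functions on the ambient space pulled back to $V$ equals $\dim W$. On the $J^{\nd,n}_k\Omega$ factor this pullback is generated by $z_1|_V,\dots,z_n|_V$, because the jet part of $\id_k(v)$ is trivial. On the $J^{\nd,n}_kX$ factor, the jet $J_kq(\id_k(v))$, read off in the birational chart on $X$ given by $(\phi_1,\dots,\phi_N)$, has entries which are (up to factorials) exactly the partial derivatives $\phi_j^{(\nu)}|_V$ for $|\nu|\leq k$ by the chain rule. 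Hence ${\rm tr.deg.}_{\C}\,\C(\{z_i\},\{\phi_j^{(\nu)}\})|_V = \dim W \geq \dim V + \dim G$.

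The main technical point I anticipate is this last translation: one must set up the dictionary between algebraic coordinates on $J^{\nd,n}_k\Omega \times J^{\nd,n}_kX$ restricted to $F(V)$ and the generators $\{z_i\}\cup\{\phi_j^{(\nu)}\}|_V$ so that each family is algebraic over the other, ensuring that no transcendence is lost. This is essentially the bookkeeping performed just above in the corollary deduced from Lemma \ref{graphclosure}, adapted to functions restricted to $V$ rather than defined on all of $\Omega$; it requires only that the $\phi_j$ give a rational embedding of $X$ together with the jet formalism of \S5--6.
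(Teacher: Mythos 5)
Your argument is correct and follows essentially the same route as the paper: take the jet-space image of $V$ under $(\id\times J q\circ\id)$, observe it lies in the diagonal $J^{\nd,n}D$, apply Theorem 9.1 to its Zariski closure $W$ using the non-weakly-special hypothesis, and identify $\dim W$ with the transcendence degree of $\{z_i\},\{\phi_j^{(\nu)}\}$ via the coordinate bookkeeping. The only cosmetic difference is that you work directly at jet order $k$, whereas the paper sets up the construction at order $2$ and lets monotonicity of transcendence degree handle $k>2$; both are fine.
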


\begin{proof}

Let 
$$
U = (\id_2\times q\circ \id_2)(V)\subset J_2\Omega \times J_2 X,$$
so that $U$ is an analytic 
subvariety of the diagonal $J_2D$.  Note that $U$ does not record any of the differential 
information concerning $V$, but instead a coordinate system for $U$ is given by 
$z_1,\dots,z_n, \phi_1,\dots,\phi_N$. The result now follows immediately 
from Theorem \ref{ASD} applied to the Zariski closure of $U$.
\end{proof}

\bigbreak

\centerline{III. Ax-Schanuel in a differential field\/}

\bigskip

In this part we formulate a version of Ax-Schanuel in the setting of a differential field.
We further show that the jet version (\ref{ASD}) 
may be deduced directly from the differential version (12.3).

\section{Characterizing the uniformization map}

We would like to have a criterion in a differential field to determine when a pair of maps 
$$
w:\Delta\ra \widehat{\Omega},\quad u: \Delta\ra X=\Gamma\backslash \Omega,
$$
where $\Delta$ is a disk of given dimension, satisfies $u=q(gv)$ for some $g\in G(\C)$.
To this end, we use Schwarzian varieties.

\begin{lemma}\label{generaldimdiffeq0}

Let $k$ be a positive integer.
There exists a positive integer $m=m(G, q, k)$ with the following property.
Let $\Delta^{k}$ be a $k$-dimensional disk, and consider a pair of maps 
$(w,u): \Delta^k\ra \widehat{\Omega} \times \widehat{\Omega}$. 
If at every $t\in \Delta^k$ there exists 
$g_t\in G(\C)$ such that $u(t)=g_tw(t)$ to order $m$,  
then there exists a global $g\in G(\C)$ with $u(t)=gw(t)$.

\end{lemma}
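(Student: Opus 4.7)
My plan is to choose $m$ large enough so that (i) the element $g_t$ in the hypothesis is uniquely determined by the $m$-jets of $w$ and $u$ at $t$ and depends holomorphically on $t$, and (ii) this holomorphic family $g_t$ is then forced by over-determination to be constant.

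For (i) I would invoke Proposition \ref{fix2}, which asserts that $G(\C)$ acts freely on non-degenerate jets of order at least $2$ in $\widehat\Omega$. For $w\colon\Delta^k \to \widehat\Omega$ with $k \leq n := \dim\widehat\Omega$, the stabilizer in $G(\C)$ of the $m$-jet of $w$ at $t$ shrinks monotonically as $m$ increases, and by Noetherianity becomes uniformly trivial for some $m = m(G,q,k)$, on the Zariski-open locus of $k$-disk jets whose image is not contained in a proper $G$-invariant subvariety of $\widehat\Omega$. The degenerate case (in which $w(\Delta^k)$ lies in such a proper subvariety) I would handle separately by reducing to a smaller compact dual and using induction on $\dim\widehat\Omega$. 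For such $m$, $g_t$ is uniquely determined, and by the implicit function theorem depends holomorphically on $t$.

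For (ii) I would fix $t_0\in\Delta^k$ and choose local coordinates on $G(\C)$ near $g_{t_0}$ via the exponential map, and on $\widehat\Omega$ near $u(t_0) = g_{t_0} w(t_0)$. For $v\in\C^k$ and $\epsilon\in\C$ both small, I would write
\[
g_{t_0+\epsilon v} = g_{t_0}\exp\Bigl(\sum_{j\geq 1}\epsilon^j X_j(v)\Bigr),\quad X_j(v)\in\mathfrak g,
\]
and set $f(s,\epsilon) := u(s) - g_{t_0+\epsilon v}\,w(s)$ in the chosen local coordinates. The hypothesis applied at $t = t_0+\epsilon v$ says $f(s,\epsilon) = O\bigl(|s-t_0-\epsilon v|^{m+1}\bigr)$ for each small $\epsilon$; since $f$ is holomorphic in $(s,\epsilon)$, this is equivalent to the vanishing of every monomial $\epsilon^a(s-t_0)^b$ of total degree $a+b\leq m$ in the bivariate Taylor expansion of $f$ at $(t_0,0)$.

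Expanding $\exp\bigl(\sum \epsilon^j X_j(v)\bigr)$ and matching coefficients of $\epsilon^a(s-t_0)^b$, at order $\epsilon^1$ I would extract that $X_1(v)$ annihilates the $(m-1)$-jet of $w$ at $t_0$ under the infinitesimal $G$-action; at order $\epsilon^j$ I would obtain an analogous condition on $X_j$, determined recursively by $X_1,\dots,X_{j-1}$ and the jet of $w$. By the choice of $m$ in (i), the $\epsilon^1$-equation forces $X_1(v) = 0$ for all $v$, and then inductively $X_j(v) = 0$ for all $j\geq 1$. Hence $g_t$ is locally constant near $t_0$; analytic continuation on the connected $\Delta^k$ then yields $g_t \equiv g$ for a single $g\in G(\C)$, so that $u(t) = g\,w(t)$ identically. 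The principal obstacle I anticipate is ensuring that $m = m(G,q,k)$ can be chosen uniformly in $w$; this reduces to bounding the order at which the stabilizer of a Zariski-generic $k$-disk jet in $G(\C)$ becomes trivial, which should follow from Proposition \ref{fix2} together with Noetherianity applied to the descending chain of (algebraic) stabilizer subgroups.
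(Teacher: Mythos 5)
Your plan has the right shape, but step (i) contains a genuine gap. You claim that, once $w$ does not land in a proper $G$-invariant subvariety of $\widehat\Omega$, the stabilizer in $G(\C)$ of the $m$-jet of $w$ at $t$ eventually becomes trivial. This is false when $k<n$. For instance take $\Omega=\Hh\times\Hh$, $G=\PPSL_2\times\PPSL_2$, $k=1$, and $w(t)=(t,i)$. The image of $w$ is not contained in any proper $G$-invariant subvariety of $(\PP^1)^2$, yet the stabilizer of every $m$-jet of $w$ contains the $2$-dimensional group $\{(1,h):h(i)=i\}$, so the stabilizer chain stabilizes at a positive-dimensional subgroup. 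Consequently the element $g_t$ is not uniquely determined by jets, so it does not "depend holomorphically on $t$ by the implicit function theorem," and the conclusion $X_1(v)=0$ at order $\epsilon^1$ does not follow; all one can extract is that $X_j(v)$ lies in the Lie algebra of the stabilized subgroup. The subsequent degenerate-case reduction to a smaller compact dual does not rescue this, since the example above is not of that form. (There is also a smaller technical slip: the hypothesis $u(s)-g_{t_0+\epsilon v}w(s)=O(|s-t_0-\epsilon v|^{m+1})$ is equivalent to vanishing of monomials in $\epsilon$ and $s-t_0-\epsilon v$, not of monomials $\epsilon^a(s-t_0)^b$ with $a+b\le m$; the two normal forms differ under the shear $s\mapsto s-\epsilon v$.)

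The paper sidesteps these issues by not asking for a trivial stabilizer at all. It partitions the space of jet pairs according to the smallest order $i$ at which $\stab\phi_i=\stab\phi_{i+1}$ (this must happen within $\dim G$ steps, since each strict drop lowers dimension), and observes that on such a stratum the $(i+1)$-st order jet of $\psi$ is expressed universally in terms of the jets of $\phi$ and the order-$\le i$ jet of $\psi$; iterating gives uniqueness of $u$ once one prescribes its order-$\le i$ data at one point, and $u=g_0 w$ is one such solution, hence the only one. Your Taylor-expansion-in-$\epsilon$ idea is in principle a workable alternative route, but you would need to run the argument modulo the stabilized subgroup $H\subset G$ of the germ of $w$, choose a local holomorphic section of $g_t H$ rather than of $g_t$ itself, and show $g_t H$ is locally constant in $G/H$; the assertion of a uniformly trivial stabilizer, on which your (i) and hence (ii) rest, must be replaced by this relative statement.
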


\begin{proof}

Let $d=\dim G$. For positive integers $a\geq b$ we define 
$$
X_{a,b}
= \{(\phi,\psi)\mid \exists\gamma\in G(\C), \gamma\circ\phi=\psi\textrm{ to order } b \}
\subset J^k_a\widehat{\Omega}\times J^k_b\widehat{\Omega}.
$$

We partition $X_{d, d}$ into subsets $X^i_{d, d}$ consisting of all pairs $(\phi,\psi)$ 
where $\stab\phi_i=\stab\phi_{i+1}$, where $\phi_i$ denotes
$\phi$ to the $i$'th order, and $i$ is the smallest such integer.
Note that this is indeed a partition since each time $\stab\phi_i\neq \stab\phi_{i+1}$ 
the dimension of $\stab\phi_{i+1}$ is at least 1 smaller than that of $\stab\phi_i$. 
Without loss of generality, assume that $(w, u)$
has image generically in $X^i_{d,d}$. 

Now, for all elements in $X^i_{d, d}$ it follows that there exists a universal equation for all 
partial $(i+1)$'st partial derivatives of $\psi$ in terms of the degree $\leq d$ derivatives of $\phi$ 
and the degree $\leq i$ partial derivatives of $\psi$ at o.
Thus one can solve for $u$ given $w$  and all the 
partial derivatives of degree $\leq i$ of $u$ at $0\in \Delta^k$. Clearly  $u(t)=g_o^{-1}w(t)$ 
is one such solution, and so that must be the only solution. This completes the proof.
\end{proof}

With $k, m$ as above let $J^k_mD\subset J^k_m\widehat{\Omega}\times J^k_m X$ 
be the graph of the projection morphism. 
Now let $V^k_m = G(\C)\cdot J^k_mD$, 
where the group $G$ acts only on the factor $J^k_m\widehat{\Omega}$.
Note that $V^k_m$ is definable, since 
$$
V^k_m=G(\C)\cdot (J^k_mD\vert_{ J^k_m\FF\times J^k_m X}).
$$
Also, $V^k_m$ is the image of $X_{m,m}$ under the projection map on the second factor, 
so its closure is analytic and 
of the  same dimension. It follows from Definable Chow that $V^k_m$ is constructible algebraic.

If now $w: \Delta^k\rightarrow \widehat{\Omega}$ is non-degenerate and $g\in G(\C)$
then the image of $(w, q(gw))$ in $J^k_m\widehat{\Omega}\times J^k_m X$ is contained in $V^k_m$.
We show the converse.

\begin{thm}\label{generaldimdiffeq}

Let $k$ be a positive integer and let 
$m=m(G, q, k)$ and $V^k_m$ be as above.
Let $(w,u):\Delta^k\ra \widehat{\Omega}\times X$. 
If the image of $J^k(w,u)$ is contained in 
$V^k_m$ then then there exists a global $g\in G(\C)$ with $u(t)=q(gw(t))$.

\end{thm}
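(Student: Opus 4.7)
The plan is to reduce the theorem to Lemma~\ref{generaldimdiffeq0} by passing from $u$ to a global lift $\tilde u$ of $u$ in $\Omega$. Since $\Delta^k$ is simply connected and, by the standing assumption of \S2.1, $q\colon \Omega\to X$ is an unramified covering, the holomorphic map $u$ admits a global holomorphic lift $\tilde u \colon \Delta^k \to \Omega \subset \widehat\Omega$.

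The next step is to translate the membership $J^k_m(w,u)(t)\in V^k_m$ into the pointwise condition demanded by Lemma~\ref{generaldimdiffeq0}. Unpacking $V^k_m = G(\C)\cdot J^k_m D$: at every $t\in \Delta^k$ there exist $\alpha_t\in J^k_m\Omega$ and $g_t\in G(\C)$ such that the $m$-jet of $w$ at $t$ equals $g_t\cdot \alpha_t$ and the $m$-jet of $u$ at $t$ equals $J^k_m q(\alpha_t)$. Because $q$ is étale, $J^k_m q$ is a bijection on jet fibers over each point, so $\alpha_t$ agrees with the $m$-jet of $\tilde u$ at $t$ up to a $\Gamma$-translate $\gamma_t$ acting on $\Omega$. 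Setting $h_t := g_t\gamma_t^{-1}\in G(\C)$, one obtains $w(t) = h_t\,\tilde u(t)$ to order $m$, equivalently $\tilde u(t) = h_t^{-1} w(t)$ to order $m$, at every point $t$.

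Lemma~\ref{generaldimdiffeq0} then applies to the pair $(w,\tilde u)\colon \Delta^k\to \widehat\Omega\times \widehat\Omega$ and yields a single $g\in G(\C)$ with $\tilde u(t) = g\, w(t)$ globally on $\Delta^k$. Composing with $q$ gives $u(t) = q(\tilde u(t)) = q(g\,w(t))$, as required.

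The main obstacle is the middle step: reconciling the pointwise jet $\alpha_t$ supplied by the $V^k_m$ hypothesis with the $m$-jet of the chosen global lift $\tilde u$. The needed compatibility rests on two facts, namely that $q$ is étale (so $J^k_m q$ is a local bijection on jets) and that $\Gamma\subset G(\C)$ (so the residual $\Gamma$-ambiguity can be absorbed into the $G(\C)$-factor). Once this translation is secured, Lemma~\ref{generaldimdiffeq0} disposes of the remainder, and the existence of the lift and composition with $q$ are formal.
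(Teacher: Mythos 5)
Your proof is correct and follows the same route as the paper's: lift $u$ through $q$, verify the pointwise hypothesis of Lemma~\ref{generaldimdiffeq0} by unwinding the $V^k_m$ membership modulo the $\Gamma$-ambiguity, and conclude by composing with $q$. The paper's version is terser (it uses a local lift and leaves the jet-fiber bookkeeping implicit), but the argument is identical in substance.
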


\begin{proof}
Suppose $(w,u)$ lands in $V^k_m$. Let $\tilde{u}(t)$ be a local lift of $u(t)$ 
such that $u(t)=q(\tilde{u}(t))$. 
By Lemma \ref{generaldimdiffeq0} it follows that
there exists some $g_0\in G(\C)$ such that $w(t)=g_0\tilde{u}(t)$. 
Thus, $u(t)=q(g_0^{-1}w(t))$ as desired.
\end{proof}

\subsection{Uniformized Loci}

Given a 
map $w:\Delta^k\ra \widehat{\Omega}$, an element $g\in G(\C)$,  
and an integer $r\ge 2$,
we get in a natural way a map 
$$
L(w,g,r):\Delta^k\rightarrow \widehat{\Omega} \times X \times J^n_{r} X,
$$
where the second map is $q(gw)$ and the
third map is $J^n_r\big(q\circ g\big)\circ \id_r\circ w$,
which records the partial derivatives of $q_g$,
where $q_g(z)=q(gz)$, to order $|\nu|\le r$, restricted
to the image of $w$.
Such a map we will call a {\it uniformized locus.\/}
Note that the second map is repeated in the zero-order terms of the third map, and so is superfluous in a way, but we find it convenient to keep track of.

From such a map we obtain an image in the jet spaces (to some order $m$)
$$
J^*_mL(w,g,r)=\widetilde{v^g_k}:\Delta^k\rightarrow
J^k_{m}\widehat{\Omega} \times J^k_{m} X \times J^n_{r} X,
$$
where the first map is $J^k_m(w)\circ \id_m$,
the second map is $J^k_m\big(q\circ g\circ w \big)\circ\id_m$,
and the third map is (again) $J^n_r\big(q\circ g\big)\circ \id_r\circ w$.

We want differential equations which characterize when a trio of maps
$$
(w,v,u): \Delta^k\rightarrow \widehat{\Omega} \times X \times J^n_{r} X
$$
arising in this way. For $(w,u)$ this is dealt with by 11.2.

Now, we cannot directly talk about the map $q$, though we do have 
access to the map to $J^n_{m}X$. There is a complication when $w$ has a stabilizer, 
in that we could  replace $w$ by $g\circ w$ for any $g$ in the stabilizer, 
and in fact by a different, holomorphically 
varying, $g$ at every point. This would not affect the map or any of its derivatives, but it would affect 
the restrictions of the derivatives of $q\circ g$. Thus, we will equip the third coordinate with an 
extra differential equation to insist that the $g$ `stays constant'. We do this as follows.

We have an algebraic map  
$G(\C)\times J^n_{\ell} \widehat{\Omega}\ra J^n_{\ell} \widehat{\Omega}$ 
given by the natural action. 
Let $V'_{n, {\ell} }$ be the image of $G(\C)\times \id_{\ell} (\widehat{\Omega})$
restricted to $J^n_{\ell}\Omega$, 
and $V_{n, {\ell} }$ its constructible
algebraic (by Corollary 3.2) image in $J^n_{\ell} X$. 
Note that for ${\ell} \geq 3$ the variety $V'_{n,{\ell} }$ is fibered by varieties 
$\Omega$ over $G(\C)/N(\C)$.
If we descend to $X$, we lose the fibration but we retain an algebraic foliation. 

To make this precise, 
let $W'_{n,{\ell} }$ be the restriction to $TV'_{n,{\ell}}$ of the image of the natural holomorphic map 
$G(\C)\times T\id_{\ell} (\widehat{\Omega}) \ra TJ^n_{\ell}\widehat{\Omega}.$ 
Then the image $W_{n,{\ell} }$  of $W'_{n,\ell}$ in $TJ^n_{\ell}X$ determines an integrable
algebraic (by Corollary 3.2) foliation of $V_{n,{\ell} }$.
%

%

A map $L(w,g,r)$ has its image in $V_{k,m}\times V_{n,r}$ and the tangent
lands in $W_{n,r}$. We show that with these properties
characterize such maps.

\begin{thm}
Let $k$ be a positive integer, $r\ge 2$, and $m$ as above.
Consider a map 
$$
(w, u, v):\Delta^k\ra \widehat{\Omega} \times X \times J^n_{r} X.
$$
If the image of $(w,u,v)$ lands in $V_{k,m}\times V_{n,r}$ and the image of $T v$ 
lands in $W_{n,r}$, and $v$ restricts to $u$ then there exists $g\in G(\C)$
such that $(w,u,v)=L(w,g,r)$.
\end{thm}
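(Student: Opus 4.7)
The plan is to decouple the three components of $(w,u,v)$ and handle each in turn, culminating in a rigidity argument pinning down a common group element.

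First, I apply Theorem \ref{generaldimdiffeq} to the pair $(w,u)$. The hypothesis that the $k$-jet of $(w,u)$ lies in $V_{k,m}$ is precisely the input needed, and yields some $g \in G(\mathbb{C})$ with $u(t) = q(g w(t))$.

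Next, I analyze $v$ using the hypotheses $v(\Delta^k) \subset V_{n,r}$ and $Tv \subset W_{n,r}$. Since $V_{n,r}$ is the image of the natural map $G(\mathbb{C}) \times \mathrm{id}_r(\widehat{\Omega}) \to J^n_r X$, each $v(t)$ may be represented by a pair $(\gamma_t, z_t) \in G(\mathbb{C}) \times \Omega$, uniquely modulo the action $\tau \cdot (\gamma, z) = (\tau\gamma, z)$ of $\Gamma$. By construction of $W_{n,r}$ as the pushforward of $G(\mathbb{C}) \times T\mathrm{id}_r(\widehat{\Omega})$, its leaves correspond to $\gamma$-constant directions; hence $Tv \subset W_{n,r}$ forces $\gamma_t$ to be locally constant along any continuous lift, and simple-connectedness of $\Delta^k$ then gives $\gamma_t \equiv \gamma$ globally. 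The restriction condition $v|_X = u$ yields $q(\gamma z_t) = q(g w(t))$; lifting continuously to $\Omega$ and using discreteness of $\Gamma$, the element $\tau_t \in \Gamma$ relating $\gamma z_t$ to $g w(t)$ is independent of $t$, and absorbing this constant into the representative of $\gamma$ (within its $\Gamma$-class, without altering $v$) gives $\gamma z_t = g w(t)$.

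The concluding step, and the main obstacle, is to identify $\gamma = g$ and $z_t = w(t)$ so that $(w, u, v) = L(w, g, r)$. Matching the two candidate representatives $(\gamma, z_t)$ and $(g, w(t))$ of $v(t)$ in $V'_{n,r}/\Gamma$ requires, since the $\Gamma$-action leaves the $\Omega$-coordinate fixed, that $z_t = w(t)$ (equivalently, that $h := \gamma^{-1}g$ fixes $w(t)$) and $\gamma \in \Gamma g$. The hardest point is to rule out a nontrivial $h$ fixing $w(\Delta^k)$ pointwise while also preserving the full $r$-jet structure; I would resolve this by invoking the freeness of the $G(\mathbb{C})$-action on nondegenerate jets of order $\ge 2$ (Proposition \ref{fix2}), which forces any such $h$ to be the identity. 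With $h = \mathrm{id}$ we obtain $\gamma = g$, and $(w, u, v) = L(w, g, r)$ follows immediately.
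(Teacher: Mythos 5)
Your opening step is the same as the paper's: apply Theorem \ref{generaldimdiffeq} to $(w,u)$ to produce $g$ with $u=q\circ g\circ w$. After that, however, your argument diverges and contains a genuine gap. The premise that a point of $V_{n,r}$ is represented by a pair $(\gamma,z)\in G(\C)\times\Omega$ ``uniquely modulo the action of $\Gamma$ on the first factor'' is false: by construction $V'_{n,\ell}$ is fibred over $G(\C)/N(\C)$, not over $G(\C)$, so one may move the source point $z$ arbitrarily and compensate in $\gamma$ (concretely, $(\gamma,z)$ and $(\gamma z z'^{-1}, z')$ give the same jet). Consequently your final step --- ``matching the two candidate representatives $(\gamma,z_t)$ and $(g,w(t))$ forces $z_t=w(t)$ and $\gamma\in\Gamma g$'' --- does not follow; and it is in any case circular, since whether $(g,w(t))$ represents $v(t)$ at all is precisely the conclusion to be proved, not a hypothesis one can feed into a uniqueness statement.

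The substantive issue is this: after your steps you know only that $\gamma$ is constant and that the \emph{base points} agree, $q(\gamma z_t)=q(gw(t))$. This gives $\gamma z_t = g\,w(t)\,b_t$ as group elements with $b_t\in B=\mathrm{Stab}(o)$, and the two $r$-jets agree precisely when $b_t$ stabilizes $\id_r(o)$, i.e.\ (by Proposition \ref{fix2}) when $b_t=e$. Nothing you have established forces this: the fibre of $V_{n,r}\to X$ over $u(t)$ has dimension $\dim G-\dim X>0$, so there are many jets in $V_{n,r}$ with the prescribed base point, and Proposition \ref{fix2} cannot be invoked because you have no reason to believe $b_t$ fixes any nondegenerate $2$-jet --- fixing the point $o$ is much weaker. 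The paper closes exactly this gap by first normalizing the source base point of $v(t)$ to be $w(t)$ itself, so that the foliation condition confines the residual ambiguity to a function $n(t)$ with values in the unipotent group $N$; since $N=N^+$ acts freely on the relevant orbit of $o$, this ambiguity \emph{is} visible in the base point, and the condition that $v$ restricts to $u$ then forces $n(t)=e$. Your reading of the foliation as making $\gamma_t$ constant is achievable by a choice of representatives, but without the base-point normalization the restriction condition can no longer see the remaining $B$-valued ambiguity, and the argument cannot be completed as written.
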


\begin{proof}
Suppose that hypotheses are satisfied. By Theorem \ref{generaldimdiffeq} it follows that there 
exists $g\in G(\C)$ with $u=q\circ g\circ w$. So it remains to address the third 
coordinate. Note that if $w$ has no stabilizer, the claim follows immediately.
As it stands, the third map must be of the form $t\ra q\circ g(t)\circ\id_k\circ w$ 
for some function $g:\Delta^k\ra G(\C)$. However, by assumption $Tv$ lands in $W_{n, r}$ 
which is an integrable foliation whose leaves are precisely the set where $g(t)\in gN$. Thus we may write $g(t)=gn(t)$ for
a function $n(t)\in N$. However, since $v$ restricts to $u$ it follows that $n(t)$ is the identity function.
Thus the function $g(t)=g$ must be constant and the claim is proved.
\end{proof}


\section{Ax-Schanuel in a differential field}

\subsection{\bf The setting\/}

We fix $G$ and $q: \Omega\rightarrow X$. Let $n=\dim X$.
We take a field of definition $L_0\subset \C$, of finite type, for $X$ and for
the system of differential equations satisfied by $q$.

The weakly special subvarieties of $X$ come in countably many families,
and so correspond to points in suitable (possibly constructible)
subvarieties of countably many Hilbert schemes. These families are defined over 
$\overline{\Q}$ and the collection of families is stable under Galois automorphisms. 
So we may take them to be (not necessarily irreducible but)
defined over $L_0$. 

We consider a differential field $(K, \mathcal{D}, C)$. 
Here $\mathcal{D}=\{D_1,\ldots, D_d\}$ is a finite set of commuting derivations  
and $C$ is the constant field. 

It is a deep theorem of Kazhdan \cite{KAZHDANA, KAZHDANB, KAZHDANC}; 
see also Milne \cite{MILNEKAZHDANA, MILNEKAZHDANB}
that a conjugate of an arithmetic variety is again arithmetic.
It will therefore be important to ensure that our differential fields are identified
correctly with the complex object.
We therefore assume (initially) that $C$ contains a subfield $\Lambda_0$ isomorphic to $L_0$
under $\iota_0:\Lambda_0\rightarrow L_0$.

We take further a field of finite type $L\subset\C$ which is a field of definition for 
the constructible algebraic varieties in \S11 and assume that $C$ contains a field
$\Lambda$ isomorphic (by $\iota$) to $L$. Then we can identify the various varieties
in \S11 with their corresponding varieties in $K$.

\subsection{Rank and transcendence degree}

Let $V$ be an algebraic variety \footnote{or a scheme of finite type} over the constant field $C$, and let $p\in V(K)$ be a $K$-point. Let $U\subset V$ be an open
affine set defined over $C$ which contains $p$. Let $R=\mathcal{O}(U)$ be the ring of functions on $U$, and let $S$ be the image of $R$ in $K$ induced by evaluation on $p$. 
We define the \emph{transcendence degree} of $p$ to
be the transcendence degree of the fraction field of $S$ over $C$.

We define the \emph{rank} of $p$ to be the rank of the matrix $\rank(p)=(D_is)_{\substack{1\leq i\leq n \\ s\in S}}$.

Finally, given two varieties $V_1,V_2$ and points $p_1\in V_1(K),p_2\in V_2(K)$ we say that $p_2$ {\it is a function of \/} $p_1$ if $\rank(p_1,p_2)=\rank(p_2)$.

\subsection{\bf Statement and proof of Differential Ax-Schanuel\/}

{We consider $K$ points $(z,x,y)$ of $\widehat{\Omega}\times X\times J^n_r X$
where $r\ge 2$.

We assume that $x$ and $z$ have the same {\it rank\/},  and that $x$ is a function of $z$.
Set $k={\rm rank\/}(x)={\rm rank}(z)$, and assume $k\ge 1$
(for $k=0$ our theorems are true but trivial).

If $(z,x,y)$ satisfies the differential conditions corresponding to the hypotheses of  11.3 
then, under any Seidenberg embedding
{\it over $\iota$}, meaning extending 
$\iota$, as may always be assumed (see the version given in Scanlon \cite{SCANLON}),
we get tuples of regular functions
$$
(\overline{z}, \overline{x}, \overline{y}): \Delta^k\rightarrow \widehat{\Omega}\times X\times J_rX
$$
which give a uniformized locus.
It is thus a natural abuse of notation to refer to a tuple $(z,x,y)$ satisfying these conditions
as a {\it uniformized locus\/} in $K$.
We will say similarly that $x$ is {\it contained in a weakly special subvariety\/} if
it gives a $K$ point of one of the varieties defining the weakly special families.\/

We can now state a differential version of Ax-Schanuel. 

\medskip
\noindent
{\bf 12.3. Theorem.\/} (Differential Ax-Schanuel) 
{\it Fix $G, q, L$ as above. Let $(K, \mathcal{D}, C)$ be a differential field
with $\Lambda\subset K, \iota$ as above.
Let $(z, x, y)$ be a uniformized locus. Then
$$
{\rm tr. deg.}_CC(z, x, y)\ge {\rm rank}(z) +\dim G
$$
unless $x$ is contained in a proper weakly special subvariety.}

\begin{proof}
Suppose the transcendence degree ${\rm trd}_CC(z, x, y)$ is less, so that
there is a variety $W$ defined over $C$ containing these quantities
with $\dim W < {\rm rank}(z)+\dim G$.
Take a suitable finitely  generated differential $K'\subset K$
field containing $\Lambda$ and all constants appearing in the algebraic dependencies, the
$z$ and the associated $q$-quantities, and a field of definition of $W$.
Take a Seidenberg embedding of $K'$ over $\iota$ into a field of meromorphic functions
of $t\in \Delta^k$ where $k={\rm rank}(z)$. 

Then $\overline{x}=q_g(\overline{z})=Q(\ol{z})$ for some $g\in G(\C)$ for which
the partial derivatives of $Q$,
restricted to the image of $\overline{z}$ as a function of $t\in \Delta^k$, agree with the Seidenberg 
embeddings $\overline{y}$ of $y$ (including $Q=\overline{x}$).
Let $\Omega_g=g^{-1}\Omega$ be the domain of $Q$.

Let $U'$ be the locus in $J^n_{2}\Omega_g\times J^n_{2}X$ 
which is the graph under $J^nQ$ of the locus
$$
(\overline{z}, 1_{n\times n}, 0,\ldots, 0).
$$
Then $U'$ is the locus (i.e. over $t\in\Delta^k$) described by
$$
(\overline{z}, 1_{n\times n}, 0,\ldots, 0; \overline{x}, \overline{y})
$$
and is clearly in $J^{{\rm nd}, n}_{2}\Omega_g\times J^{{\rm nd}, n}_{2} X$,
(recall the assumption that $q$ is unramified).

We let $W'$ be the Zariski closure (over $\C$) of $U'$ and $U\subset W\cap J^n q_g$
the component containing $U'$. We have $\dim U'=k={\rm rank}(z)$,
while $W'$ is a subvariety of the image of $W$, so
$\dim_{\C}W'\le \dim_C W$. Hence we have
$$
\dim U\ge \dim U' = {\rm rank}(k) > \dim_{\C}W - \dim G \ge \dim_C W'-\dim G.
$$

Hence by Jet Ax-Schanuel for the map $Q$, which is just the same statement 
as for $q$,
we conclude that $\overline{x}$ is contained in a proper weakly special subvariety of $X$.
But then $x$ also has this property. 
\end{proof}

We next show that the jet version of Ax-Schanuel (Theorem 9.1) may in fact be deduced directly
from Theorem 12.3.

\subsection
{\bf Direct proof of 9.1 from 12.3\/} As in \cite{PTAS}.
We assume 12.3 holds.

If $A=\{f_1,\ldots, f_\ell\}$ is a set of regular functions of  
$t\in\Delta^k$, we set
$$
\dim A=\dim (f_1,\ldots, f_M)=
\dim \{\big(f_1(t),\ldots, f_M(t)\big): t\in\Delta^k\},
$$
where $\{\big(f_1(t),\ldots, f_M(t)\big): t\in\Delta^k\}$ is the locus parameterized by $A$.
The transcendence degree 
${\rm tr.deg.\/}_{\C}\C(f_1,\ldots, f_M)$ is the dimension of the Zariski closure
of this locus, which we denote $\dim{\rm Zcl}(A)=\dim{\rm Zcl}(f_1,\ldots, f_M)$.

We consider a locus $U\subset J^{{\rm nd}, n}_{\ell}D$, of dimension $k$ say,
where $\ell\ge 2$, meaning the graph of $q$ on some locus of non-degenerate jets.

We take $z=(z_1,\ldots, z_n), x=(x_1,\ldots, x_N)$ as affine coordinates on $\Omega$ 
and an open affine subset of $X$ containing an open subset of $U$.
We assume that $x_1,\ldots,x_n$ are algebraically independent on $X$ 
and the further variables dependent upon them.
We take coordinates
$$
(z, r, s, \ldots; x, R, S, \ldots)
$$
in $J^n_\ell \Omega\times J^n_\ell X$, where
$r=(r^i_j, i,j=1,\ldots, n)$ with $r^i$ representing the coordinates of the first
derivatives of $z_i$, likewise $R=(R^i_j)$ for $x_i, i=1,\ldots, N$, $s=(s^i_{jk})$ and 
$S=(S^i_{jk})$ the second derivatives of $z_i, x_i$ etc. 
The non-degeneracy condition means that the matrix $r$ has rank $n$.

Then the action of $q$ on the jets is given by $x_i=q_i(z)$ and 
$$
R^i_j=\sum_\ell {\partial q_i\over \partial z_\ell} r^\ell_j, \quad
S^i_{jh}=\sum_\ell {\partial q_i\over \partial z_\ell} s^\ell_{jh}+
\sum_m \sum_\ell {\partial q_i\over \partial z_\ell \partial z_m} r^m_h r^\ell_j,\quad {\rm etc}
$$
with summations $\ell, m=1,\ldots, n$.

Thus $U$ is locally parameterized by $t\in\Delta^k$ in the form
$$
(z(t), r(t), s(t),\ldots; q(z(t)), R(t), S(t),\ldots)
$$
where
$$
R^i_j(t)=\sum_\ell {\partial q_i\over \partial z_\ell}  r^\ell_j (t), \, 
S^i_{jh}(t)=\sum_\ell {\partial q_i\over \partial z_\ell} s^\ell_{jh}(t)+
\sum_m \sum_\ell {\partial q_i\over \partial z_\ell \partial z_m} r^m_h(t) r^\ell_j(t),\, {\rm etc}
$$
and the derivatives of $q$ are evaluated at $z(t)$.

We must prove that, as functions of $t$,
$$
\dim {\rm Zcl} (z, r, s, \ldots; x, R, S, \ldots)\ge \dim(z,r,s,\ldots)+\dim G.
$$

We claim that 
$C(z, r, s, \ldots; x, R, S, \ldots)=C(z, r, s, \ldots; x, q_i^{(\nu)}\circ z(t), |\nu|\le \ell)$. 
Clearly the LHS is contained in the right. On the other hand, since $r$ has full rank, 
by our non-degeneracy we may definably find a map $\phi:\Omega\ra \Delta^j$ 
definable in $C(z,r,s,\dots)$ such that $(z,r,s,...)\circ\phi$ is the identity, 
and thus the two fields are equal. Thus we need to prove
$$
\dim {\rm Zcl} (z, r, s, \ldots; x, q_i^{(\nu)}\circ z(t), |\nu|\le \ell) \ge \dim(z,r,s,\ldots)+\dim G.
$$

We consider the differential field containing the functions $z, x, y=q_i^{(\nu)}\circ z$.
The hypotheses of 12.3 hold: that is $(z, x, y)$ is a 
uniformized locus of rank $k={\rm rank}(z)=\dim U$.

If the projection of $U$ to $X$ is not contained in a proper weakly special subvariety
then we have
$$
\dim {\rm Zcl} (z, x, y,\ldots) \ge \dim (z) +\dim G
$$
and the conclusion then follows because, for any sets $A, B$ of functions,
$$
\dim {\rm Zcl} (A, B) -\dim {\rm Zcl} (A) \ge \dim (A,B) - \dim A.
$$
This concludes the proof.\ \qed

\subsection{\bf A special case\/}

We state a special case of Theorem 12.3 which clarifies the 
relationship between
it and the modular and exponential cases. This version views
$q$ as an analogue of $\exp / j$, and concerns differential avatars of
the Cartesian product map
$$
q^\ell: \Omega^\ell\rightarrow X^\ell
$$
which are suitably non-degenerate on each factor so that
the differential equation can be straighforwardly imposed on the
corresponding coordinate functions.

The point is that if $z, x$ have rank $n$ and $(z,x)\in V^k_m$ then
$y$ is uniquely determined and lies in $J^n_r(K)$ by
solving suitable systems of linear equations
in the derivatives of $z, x$.

\medskip
\noindent
{\bf 12.5. Theorem.\/} 
{\it Suppose we have $z=\big(z^{(1)}, \ldots, z^{(\ell)}\big)\in \widehat\Omega^\ell(K)$, 
with ${\rm rank\/} (z^{(k)})=n$ for each $k=1,\ldots, \ell$,
and $x=\big(x^{(1)},\ldots, x^{(\ell)}\big)\in X^\ell(K)$, each of rank $n$, 
such that $x^{(i)}$ is a function of $z^{(i)}$ for each $i$.
Let $y^{(i)}$ be the partial derivatives of $x^{(i)}$ with respect to $z^{(i)}$
up to order $r\ge 2$
and put $y=\big(y^{(1)},\ldots, y^{(\ell)}\big)$.

Suppose further that $(z^{(k)}, x^{(k)})\in V^k_m$ for each $k=1,\ldots, \ell$.
Then
$$
{\rm tr.deg.}_CC\big(z,x,y\big) 
\ge {\rm rank\/} (z)+\ell\dim G
$$
unless $x$ is contained in a proper weakly
special subvariety of $X$. \ \qed\/}

\bigbreak
\bigbreak

{\bf Acknowledgements.\/} For the research reported in this paper,
NM thanks the HKRGC for partial support under grant GRF 17303814, 
JP thanks the EPSRC for partial support under grant EP/N008359/1,
and JT thanks NSERC and the Alfred P. Sloan Foundation for partial support 
under a discovery grant and Sloan Fellowship.

\bigskip

\noindent
\leftline{NM: Department of Mathematics,
University of Hong Kong, Hong Kong.}
\rightline{nmok@hku.hk}

\bigskip
\noindent
\leftline{JP: Mathematical Institute, 
University of Oxford, Oxford, UK.}
\rightline{pila@maths.ox.ac.uk}

\bigskip
\noindent
\leftline{JT: Department of Mathematics, University of Toronto, Toronto, Canada.}
\rightline{jacobt@math.toronto.edu}


\begin{thebibliography}{10}

\bibitem{ANDRE} Y. Andr\'e,
Mumford-Tate groups of mixed Hodge structures and the theorem of the fixed part,
{\it Compositio\/} {\bf 82} (1992), 1--24. 

\bibitem{ACZ} Y. Andr\'e, P. Corvaja, U. Zannier, 
The Betti map associated to a section of an abelian scheme,
with an appendix by Z. Gao, arXiv:1802.03204. 

\smallskip
\bibitem{AX} J. Ax, 
On {S}chanuel's conjectures, {\it Annals of Math.} {\bf 93}  (1971), 252--268. 

\smallskip
\bibitem{BT} B. Bakker and J. Tsimerman,
The Ax-Schanuel conjecture for variations of Hodge structures,
arXiv:1712.05088. 

\smallskip
\bibitem{BEZU} D. Bertrand and W. Zudilin,
On the transcendence degree of the differential field generated
by Siegel modular functions, {\it Crelle\/} {\bf 554} (2003), 47--68. 

\smallskip
\bibitem{BUIUM} A. Buium,
Geometry of differential polynomial functions, III: Moduli spaces,
{\it Amer. J. Math.\/} {\bf 117} (1995), 1--73.

\smallskip
\bibitem{DAWREN} C. Daw and J. Ren,
Applications of the hyperbolic Ax-Schanuel conjecture,
arXiv:1703.08967. 

\smallskip
\bibitem{DELIGNET} P. Deligne,
Travaux de Shimura, in {\it Seminaire Bourbaki, 23\`eme Ann\'ee\/} (1970/71), Exp. 389,
{\it Lecture Notes in Math.\/} {\bf 244}, pp 123--165, Springer-Verlag, New York, 1966.


\smallskip
\bibitem{DELIGNEV} P. Deligne,
Variet\'es de Shimura: interpr\'etation modulaire, et techniques de 
construction de mod\`eles canoniques,
{\it Automorphic froms, representations and $L$-functions,\/} Part 2, 247--289,
Proc. Symp. Pure Math. XXXiii, AMS, Providence, 1977.

\smallskip
\bibitem{DMM}
L. van den Dries, A. Macintyre, D. Marker, The
elementary theory of restricted analytic fields with exponentiation,
{\it Annals of Math.} {\bf 140} (1994), 183--205. 

\smallskip
\bibitem{DM}
L. van den Dries and C. Miller,
On the real exponential field with restricted analytic functions, 
{\it Israel J. Math.} {\bf 85} (1994), 19--56. 

\smallskip
\bibitem{DMGC}
L. van den Dries and C. Miller,
Geometric categories and o-minimal structures,
{\it Duke Math. J.\/} {\bf 84} (1996), 497--540. 


\smallskip
\bibitem{HM01} J.-M. Hwang and N. Mok,
Cartan-Fubini type extension of holomorphic maps for Fano manifolds of Picard number 1,
{\it J. Math. Pure Appl.\/} {\bf 80} (2001), 563-575.

\smallskip
\bibitem{HT}
J.-M. Hwang and W.-K. To, Volumes of complex analytic subvarieties of Hermitian symmetric spaces,
{\it Amer. J. of Math.} {\bf 124} (2002), 1221--1246. 

\smallskip
\bibitem{KAZHDANA} D. Kazhdan, 
Arithmetic varieties and their fields of quasi-definition, {\it Proc. ICM Nice 1970}.

\smallskip
\bibitem{KAZHDANB} D. Kazhdan, 
On arithmetic varieties,
{\it Lie groups and their representations (Summer school, Budapest, 1971)},
151--217, Halsted, New York, 1975.

\smallskip
\bibitem{KAZHDANC} D. Kazhdan, 
On arithmetic varieties, II, {\it Israel J. Math.\/}
{\bf 44} (1983), 139--159.

\smallskip
\bibitem{KUY}
B. Klingler, E. Ullmo, and A. Yafaev,
The hyperbolic Ax-Lindemann-Weierstrass conjecture
{\it Publ. Math. IHES\/} {\bf 123} (2016), 333--360. 

\smallskip
\bibitem{LOJASIEWICZ} S. Lojasiewicz, 
{\it Introduction to complex analytic geometry,\/}
Birkh\"auser, Basel, 1991. 

\smallskip
\bibitem{MILNEKAZHDANA} J. S. Milne, 
The action of an automorphism of $\C$ on a Shimuira variety
and its special points, {\it Prog. Math.\/} {\bf 35} 
(Shafarevich volume), Birkh\"auser, 1983.

\smallskip
\bibitem{MILNEKAZHDANB} 
J. S. Milne, Kazhdan's theorem on arithmetic varieties,
TeXed version of 1983 handwritten notes, from website of author.

\smallskip
\bibitem{MILNE} 
J. S. Milne,
{\it Introduction to Shimura varieties,\/} available from www.jmilne.org, 2004.

\smallskip
\bibitem{MOK} N. Mok,
Zariski closures of images of algebraic subsets under the uniformization map of
finite-volume quotients of the complex unit ball,
web preprint, 2017. 

\smallskip
\bibitem{Mo99} N. Mok,
G-structures on irreducible Hermitian symmetric spaces of rank $\ge 2$ and deformation rigidity, 
{\it Contemp. Math.\/}, {\bf 222} (1999), 81--107.

\smallskip
\bibitem{MOKZHONG} N. Mok and J.-Q. Zhong,
Compactifying complete K\"ahler-Einstein manifolds of finite topological type
and bounded curvature, {\it Annals of Math.\/} {\bf 129} (1989), 427--470.

\smallskip
\bibitem{Oc70} T. Ochiai,
Geometry associated with semisimple flat homogeneous spaces, 
{\it Trans. A.M.S.}  {\bf 152} (1970) 159--193. 

\smallskip
\bibitem{PSSELECTA} Y. Peterzil and S. Starchenko,
Uniform definability of the Weierstrass $\wp$ functions and generalized tori
of dimension one, {\it Selecta Math. N. S.\/} {\bf 10} (2004), 525--550.

\smallskip
\bibitem{PSNEWTON} Y. Peterzil and S. Starchenko,
Complex analytic geometry in a non-standard setting,
{\it Model Theory with Applications to Algebra and Analysis,} pp 117--166,
Z. Chatzidakis, A. Pillay, and A. Wilkie, editors,
LMS Lecture Note Series {\bf 349}, CUP, 2008.

\smallskip
\bibitem{PSCRELLE} Y. Peterzil and S. Starchenko,
Complex analytic geometry and analytic-geometric categories,
{\it Crelle} {\bf 626} (2009), 39--74.

\smallskip
\bibitem{PSICM}
Y. Peterzil and S. Starchenko, 
Tame complex analysis and o-minimality,
{\it Proceedings of the ICM}, Hyderabad 2010.

\smallskip
\bibitem{PSDMJ} 
Y. Peterzil and S. Starchenko, 
Definability of restricted theta 
functions and families of abelian varieties, 
{\it Duke Math. J.\/} {\bf 162} (2013), 731--765.

\smallskip
\bibitem{PTAS} J. Pila and J. Tsimerman,
The Andr\'e-Oort conjecture for the moduli space of abelian surfaces,
{\it Compositio\/} {\bf 149} (2013), 204--214.

\smallskip
\bibitem{PTAXJ} J. Pila and J. Tsimerman, 
Ax-Schanuel for the $j$-function, 
{\it Duke Math. J.\/} {\bf 165} (2016), 2587--2605.

\smallskip
\bibitem{PW} J. Pila and A. J. Wilkie, 
The rational points of a definable set, 
{\it Duke Math. J.\/} {\bf 133} (2006), 591--616.

\smallskip
\bibitem{SCANLON} T. Scanlon, Algebraic differential equations from covering maps,
{\it Adv. Math.} {\bf 330} (2018), 1071--1100.


\smallskip
\bibitem{SEIDENBERGA} A. Seidenberg, 
Abstract differential algebra and the analytic case, 
{\it Proc. A. M. S.\/} {\bf 9} (1958), 159--164.

\smallskip
\bibitem{SEIDENBERGB} A. Seidenberg, 
Abstract differential algebra and the analytic case. II,  
{\it Proc. A. M. S.\/} {\bf 23} (1969), 689--691.

\smallskip
\bibitem{JTAX} J. Tsimerman, 
Ax-Schanuel and o-minimality,
{\it O-minimality and diophantine geometry,\/} 216--221,
LMS Lecture Note Series {\bf 421}, 
G. Jones and A. J. Wilkie, editors, CUP, 2015.

\smallskip
\bibitem{WILKIE} A. J. Wilkie,
Model completeness results for
expansions of the ordered field of real numbers by restricted
Pfaffian functions and the exponential function, {\it J.
Amer. M. Soc.\/} {\bf 9} (1996), 1051--1094.

\smallskip
\bibitem{YOSHIDA} M. Yoshida, {\it Fuchsian differential equations,\/}
Aspects of Math. {\bf E11}, Vieweg, Braunschweig, 1987.


\end{thebibliography}
\end{document}